\documentclass{IEEEojcsys}

\usepackage{color,array}
\usepackage{graphicx}

\jvol{}
\jnum{}
\paper{}
\pubyear{}
\receiveddate{xx}
\accepteddate{xx}
\publisheddate{xx}
\currentdate{xx}
\doiinfo{}

\usepackage{amsmath,amssymb,amsfonts}
\usepackage{textcomp}
\usepackage[normalem]{ulem}
\usepackage{tikz}
\usepackage{cite}
\usepackage{mathrsfs}
\usepackage{comment}
\usepackage{algorithm}
\usepackage{algorithmicx,algcompatible}
\usepackage[compatible]{algpseudocode}
\usepackage{stfloats}
\usepackage[colorlinks,urlcolor=blue,linkcolor=blue,citecolor=blue]{hyperref}
\newtheorem{theorem}{Theorem}
\newtheorem{definition}{Definition}

\newtheorem{lemma}{Lemma}

\newtheorem{remark}{Remark}
\newtheorem{example}{Example}
\newtheorem{assumption}{Assumption}

\DeclareMathOperator{\rank}{\rho}

\DeclareMathOperator{\row}{Row}



\begin{document}

 \sptitle{Article Category}

\title{Functional $\mathscr{H}_{\infty}$ Filtering for Descriptor Systems
with Incrementally Quadratic Nonlinearities under Disturbances%
}
\authornote{ This work is supported by the
Council of Scientific and Industrial Research, New Delhi, India, under
Grant Ref. No. 09/1023(0039)/2020-EMR-I; and by the Science and Engineering
Research Board, New Delhi, via Grant No. CRG/2023/008861.}

\author{Rishabh Sharma\affilmark{1}}

\author{Nutan Kumar Tomar\affilmark{1} (Member, IEEE)}

\affil{Department of Mathematics, Indian Institute of Technology Patna, India} 

\corresp{CORRESPONDING AUTHOR: Rishabh Sharma (e-mail: \href{rishabh$\_$2021ma18@iitp.ac.in}{rishabh$\_$2021ma18@iitp.ac.in})}
\authornote{This is an extended version of the paper accepted for
publication in the \emph{IEEE Open Journal of Control Systems}. It includes
an appendix omitted from the published version due to space constraints.
This work was supported by the Council of Scientific and Industrial
Research, New Delhi, India, under Grant No. 09/1023(0039)/2020-EMR-I, and
by the Science and Engineering Research Board, New Delhi, under Grant
No. CRG/2023/008861.}


\begin{abstract} 
This paper develops a functional $\mathscr{H}_{\infty}$ filter for nonlinear descriptor systems subject to external disturbances. Conventional $\mathscr{H}_{\infty}$ filtering approaches for descriptor systems impose restrictive regularity assumptions and employ implicit descriptor-form filters, leading to practical implementation difficulties. Moreover, existing approaches mainly target full-or reduced-order state estimation, which is computationally inefficient when only a specific functional of the state is required. To address these limitations, the filter is formulated directly in an explicit state-space framework and can be initialized with arbitrary initial values. The filter order is chosen to be less than or equal to the dimension of the functional vector to be estimated, thereby reducing computational complexity. The considered nonlinearities are characterized using incremental quadratic constraints parameterized by appropriate multiplier matrices, which encompass Lipschitz, one-sided Lipschitz, monotone, and many other nonlinearities. Sufficient criteria for the existence of the proposed filter are established through a rank condition imposed on the system matrices together with a set of linear matrix inequalities (LMIs). Under these conditions, asymptotic stability of the estimation error dynamics is guaranteed, while the influence of external disturbances on the error is bounded within a prescribed $\mathcal{L}_{2}$-performance framework. Finally, numerical simulations demonstrate and validate the effectiveness of our theoretical results.
\end{abstract}

\begin{IEEEkeywords}
Functional $\mathscr{H}_{\infty}$ filter; Incremental quadratic constraints ;Linear matrix inequality.
 \end{IEEEkeywords}

\maketitle

\section{Introduction}\label{sec:introduction}

State estimation and filtering for nonlinear dynamical systems have been extensively studied in recent years in view of their theoretical significance and practical applications. As a generalization of classical state-space representations described by ordinary differential equations, descriptor systems are capable of representing a broader class of dynamical behaviors and capturing physical characteristics in a more realistic manner. Descriptor systems—often referred to as singular systems or differential–algebraic equation (DAE) systems—constitute a fundamental class of dynamical models characterized by the coupling of differential and algebraic constraints. Such models arise naturally in numerous application areas, including electrical networks subject to Kirchhoff’s laws, power system dynamics, mechanical systems with kinematic constraints, and chemical process models. For additional background and illustrative examples, the readers are referred to the books \cite{dai1989singular, riaza2008differential}.

With the increasing complexity of modern engineering systems, practical descriptor models often exhibit nonlinear behavior. To the best of our knowledge, the study of nonlinear descriptor systems was initiated by Luenberger in $1979$ \cite{luenberger1979non}. Since then, substantial progress has been made in exploring their solvability and control applications. For an in-depth discussion of solutions to nonlinear descriptor systems, we refer to \cite{mehrmann2006descriptor, kunkel2006differential, lamour2013differential}.
Nevertheless, most existing works on nonlinear descriptor systems focus on specific structural forms of nonlinearities. For example, nonlinearities considered in state estimation for descriptor systems include Lipschitz nonlinearities \cite{shields1997observer, lu2006full, berger2018observers, berger2020observers}, which are the most frequently recognized, as well as one-sided Lipschitz \cite{zulfiqar2016observer,zhang2012full}, monotone \cite{gupta2018unknown, berger2020observers}, and quadratic-inequality nonlinearities \cite{yang2013nonlinear, chan2021nonlinear}. Although these studies provide substantial knowledge for specific types of nonlinear descriptor systems, they remain limited and do not adequately address more general nonlinear characteristics. Consequently, it is essential to develop approaches capable of handling a broader range of nonlinear characteristics.

It is worth noting that many nonlinearities explored in the above-mentioned studies can be represented within the framework of incremental quadratic constraints ($\delta \mathrm{QC}$) through proper selections of incremental multiplier matrices ($\delta \mathrm{MMs}$) introduced by D’Alto and Corless \cite{d2013incremental}. The $\delta \mathrm{QC}$ formulation gives a unified description for a wide range of nonlinear characteristics, in the sense that different nonlinear behaviors can be interpreted as special cases of $\delta \mathrm{QC}$ by choosing appropriate $\delta \mathrm{MMs}$. In recent years, systems satisfying $\delta \mathrm{QC}$ have received considerable attention; see \cite{zhao2018observer, zhang2020adaptive, moysis2022adaptive} and the references therein. More importantly, a limited number of works have extended the $\delta \mathrm{QC}$ framework to descriptor systems \cite{moysis2020observer, liu2023state, tripathi2025reduced}. In \cite{moysis2020observer}, a state estimation method was proposed for rectangular descriptor systems subject to $\delta \mathrm{QC}$ constraints, with particular emphasis on addressing nonlinearities appearing in the output equation. The work in \cite{liu2023state} investigates descriptor systems incorporating $\delta \mathrm{QC}$-type nonlinearities in both the output and state dynamics; however, the analysis is restricted to square system configurations, and the unknown disturbance is assumed to be decoupled from the nonlinear components. In contrast, the model considered in \cite{tripathi2025reduced} addresses rectangular descriptor systems subject to $\delta \mathrm{QC}$ nonlinearities in both the output and state equations, along with an unknown parameter in nonlinear form.

The aforementioned discussion primarily deals with observers to estimate the complete state vector of a control system. However, in many practical scenarios, estimating the entire state vector is neither necessary nor desirable. Often, only a subset or a linear function of state variables is required, as in fault detection, feedback control, and disturbance estimation. Observers that target a linear function of state variables are known as functional observers. Such observers are typically of much lower order, eliminate redundant state reconstruction, and can be developed under less restrictive conditions than full-state estimators \cite{trinh2011functional}. A comprehensive overview of functional observers for linear systems described by DAEs is reported in \cite{jaiswal2021existence, tunga2023functional, jaiswal2024existence} and the references therein.

Motivated by these considerations, addressing functional state estimation in the presence of disturbances becomes an important problem. In this context, the $\mathscr{H}_\infty$ filtering problem has emerged as a powerful and widely adopted approach for addressing state estimation problems. In $\mathscr{H}_{\infty}$ filtering, the goal is to design a stable filter that ensures the induced $\mathcal{L}_{2}$-performance from disturbance signals to the estimation error does not exceed a prescribed attenuation level. In contrast to classical methods of filtering, such as the Kalman filter, the $\mathscr{H}_\infty$ framework does not rely on any probabilistic assumptions regarding the disturbance or noise signals, which enables this framework to be used for performing a worst-case performance analysis \cite{simon2006optimal}. According to existing literature, Xu \emph{et al.} \cite{xu2003filtering} were the first to formulate the functional $\mathscr{H}_\infty$ filtering problem within the class of linear descriptor systems satisfying the regularity condition. Since its introduction, the topic has attracted sustained research interest. The design of $\mathscr{H}_\infty$ filter for linear DAE systems has been
studied in \cite{xu2007reduced,darouach2009unbiased, sahereh2017h, tunga2023h}, and for nonlinear DAE systems in \cite{darouach2011,abbaszadeh2012generalized,zhang2015improved,sharma2026functional}. Despite their practical relevance, functional observers and filters for nonlinear descriptor systems have received very little attention. For example, Zhang \emph{et al.} \cite{zhang2015improved} considered Lipschitz nonlinearities and proposed a functional $\mathscr{H}_\infty$ filter in descriptor form, which is less advantageous in practice owing to its implicit structure and the requirement for consistent initial conditions.  Recently, Sharma \emph{et al.} \cite{sharma2026functional} introduced a new concept of functional $\mathscr{H}_{\infty}$ filters for nonlinear descriptor systems characterized by generalized monotone nonlinearities, presenting the filter in state-space form, which offers a more practical and accessible approach.
   However, so far, no results are available on functional $\mathscr{H}_\infty$ 
 filtering problem for nonlinear descriptor systems that satisfy the $\delta \mathrm{QC}$ condition.
 In light of the research gaps identified in the preceding discussion, this paper is focused on functional $\mathscr{H}_{\infty}$ filter design for nonlinear descriptor systems satisfying $\delta \mathrm{QC}$. The primary contributions of this work are provided below.
 \begin{enumerate}
    \item Previous studies have mainly focused on full- or reduced-order filters for regular and square descriptor systems, whereas this paper develops a functional $\mathscr{H}_{\infty}$ filter for a class of nonlinear descriptor systems subject to disturbances. Moreover, the proposed approach explicitly accommodates general (rectangular) descriptor structures, thereby extending the applicability beyond existing approaches \cite{xu2003filtering, xu2007reduced, sahereh2017h, zhang2015improved, liu2023state}.
    \item Unlike the prior implicit descriptor form of $\mathscr{H}_{\infty}$ filters \cite{abbaszadeh2012generalized, liu2023state, zhang2015improved}, the proposed filter adopts an explicit state-space representation that simplifies implementation and allows arbitrary initial conditions.
    \item A key novelty of our approach is that the filter order is chosen to be less than or equal to the dimension of the functional vector to be estimated, thereby ensuring that only the required state components are reconstructed. 
    \item A novel set of sufficient conditions is proposed, characterized by a rank constraint on the system coefficient matrices together with an LMI framework. These conditions ensure asymptotic stability of the estimation error dynamics and bound the influence of external disturbances on the estimation error within a prescribed $\mathcal{L}_2$ performance level. Furthermore, the filter design procedure is described algorithmically and can be readily implemented using existing LMI solver packages, e.g. the LMI toolbox in MATLAB. 
\end{enumerate}

\textbf{Notations:}
We use $0$ and $\mathbb{I}$ to indicate zero and identity matrices, respectively. All unspecified blocks in partitioned matrices are zero matrices of compatible sizes. For convenience, we use the $\mathbb{I}_n$ for the $n \times n$ identity matrix and $0_{m \times n}$ for the $m \times n$ zero matrix.  The set of complex numbers is represented by $\mathfrak{C}$. For any matrix $\mathbb{A}$, the symbols $\mathbb{A}^\top$, $\mathbb{A}^{+}$, and $\rho (\mathbb{A})$ represent its transpose, Moore-Penrose (MP) inverse, and rank, respectively. For any (square) matrix $\mathbb{A}$, we denote $\mathbb{A}>0(\mathbb{A}<0)$ if, and only if, $\mathbb{A}$ is a symmetric positive (negative) definite matrix. $\mathbb{C}$ denotes the set of complex numbers and
$\overline{\mathbb{C}}^{+} := \{\lambda \in \mathbb{C} \mid \operatorname{Re}(\lambda) \geq 0\}$. We write $\mathfrak{C}^k(X \to Y)$ for the set of $k$-times continuously differentiable functions $f: X \to Y$. The space of square-integrable functions $f:[0,t_f]\to\mathbb{R}^n$, with $t_f>0$, 
is represented by $\mathcal{L}_2([0,t_f],\mathbb{R}^n)
:= \{ f : \int_0^{t_f} \|f(t)\|^2 dt < \infty \}$,
where $\|f\|_2^2 = \int_0^{t_f} \|f(t)\|^2 dt$. The notation $x(t) \to 0$ as $t \to \infty$ signifies that $\lim_{t \to \infty} \|x(t)\| = 0$ and that $\|\cdot\|$ is the Euclidean norm. Finally, $\text{blk-diag}(A_1,\ldots,A_k)$ stands for the block diagonal matrix formed with diagonal blocks $A_1,\ldots,A_k$.

 \textbf{Organization:} The paper proceeds as follows. Section~\ref{sec:prob_form} introduces the problem setup and necessary preliminaries. In Section~\ref{main}, a constructive framework for functional filter design is developed using matrix rank conditions and LMI feasibility analysis. Here, the order of the proposed filter can be selected to be less than or equal to the size of the functional vector. In Section~\ref{sec:num_example}, we provide two numerical examples to substantiate our theoretical findings. Section~\ref{sec:conclusion} summarizes the paper and outlines potential avenues for future research.


\section{ Problem formulation and preliminaries}\label{sec:prob_form}
In this paper, we focus on nonlinear descriptor systems satisfying the $\delta \mathrm{QC}$ condition, described by:
\begin{subequations} \label{sys}
     \begin{eqnarray}
         \mathcal{E}\dot{x}(t) &= & \mathcal{A}x(t)+\mathcal{B}u(t)+\mathcal{D}v(t)+\mathcal{\bar{D}}_{\xi}\xi(t) \label{sysa}\\ 
        && +\mathcal{F}g(H\mathcal{K}x,t), \notag \\
	  \Tilde{y}(t) &= & \mathcal{C}x(t)+\mathcal{G}v(t),    \label{sysb}\\
	   z(t) &= & \mathcal{K}x(t),    \label{sysc}
    \end{eqnarray}
\end{subequations}
where $\mathcal{E}, ~\mathcal{A} \in {\mathbb{R}}^{m \times n}$, $\mathcal{B} \in {\mathbb{R}}^{m \times k}$, $\mathcal{C} \in {\mathbb{R}}^{r \times n}$, $\mathcal{D} \in {\mathbb{R}}^{m \times q}$, $\mathcal{\bar{D}}_{\xi} \in {\mathbb{R}}^{m \times s}$, $\mathcal{F} \in {\mathbb{R}}^{m \times l}$,  $\mathcal{G} \in {\mathbb{R}}^{r \times q}$, $H \in {\mathbb{R}}^{l \times p}$, and $\mathcal{K} \in {\mathbb{R}}^{p \times n}$ are constant matrices. We refer to $x(t) \in \mathbb{R}^{n}$ as the semistate vector, $u(t) \in \mathbb{R}^{k}$ as the (known) input vector, $\Tilde{y}(t) \in \mathbb{R}^{r}$ as the (measured) output vector, $v(t) \in \mathbb{R}^{q}$ as the (unknown) input vector, $\xi(t) \in \mathbb{R}^{s}$ as the disturbance vector of bounded energy, and $z(t)\in \mathbb{R}^{p}$ as the (unmeasured) output functional vector. Let $\mathscr{X} \subseteq \mathbb{R}^p$ be an open set and define $\mathscr{X}_1 := H\mathscr{X} \subseteq \mathbb{R}^l$. The nonlinear mapping $g: \mathscr{X}_1 \times \mathbb{R} \to \mathbb{R}^l$ is assumed to be known and continuously differentiable, \emph{i.e.}, $g \in \mathfrak{C}^1(\mathscr{X}_1 \times \mathbb{R}  \to \mathbb{R}^l)$, and to satisfy an incremental quadratic constraint with respect to its first argument (see Assumption~\ref{assum:1}). Throughout this work, we assume that the system designer has 
defined the system variables such that system \eqref{sys} admits 
at least one solution (cf. Definition~\ref{d1}).  Furthermore, we assume that for 
admissible initial conditions, inputs, and disturbance functions, 
the functional $z(t)$ is unique even if the uniqueness of $x(t)$ is not guaranteed by \eqref{sysa}. As shown in the Appendix, the uniqueness of $z(t)$ is not
merely an assumption, but a necessary condition for the existence of any
functional filter. We define the solutions of system~\eqref{sys} as follows:

\begin{definition} [\cite{berger2018observers}]\label{d1}
Let $\mathtt{I} \subseteq \mathbb{R}$ be an open interval. A tuple 
$(x, u, v, \xi, \tilde{y}, z) \in \mathfrak{C}(\mathtt{I};\mathbb{R}^{n+k+q+s+r+p})$ 
is called a solution of~\eqref{sys} if $x \in \mathfrak{C}^{1}(\mathtt{I};\mathbb{R}^{n})$ 
and \eqref{sys} holds for every $t \in \mathtt{I}$.
\end{definition}
In this work, we develop a design methodology for a filter described by the following structure:
\begin{subequations} \label{est}
\begin{eqnarray}
\dot{w}(t) & = & N w(t)+ H_1u(t) +L_1\Tilde{y}(t) + F_1g(H\hat{z},t), ~~ \label{esta} \\
\hat{z}(t) & =& Rw(t)+M_1u(t)+M_2\Tilde{y}(t), \label{estb}
\end{eqnarray}
\end{subequations}
where $w(t) \in \mathbb{R}^{l_{1}}$ signifies the state vector of the filter, while $\hat{z}(t) \in \mathbb{R}^{p}$ denotes the estimate of the functional state $z(t)$. A filter of the above form \eqref{est} is known as an ODE filter, and the non-negative integer $l_{1}$ ($\leq p$) is referred to as its order. If $l_{1}<p$, the filter is said to be of reduced-order. Furthermore, if 
$l_{1}=0$, the filter is described solely by \eqref{estb} and is therefore referred to as a static filter (see Remark \ref{rem13}).

We now define a functional $\mathscr{H}_\infty$ filter for 
system~\eqref{sys}.
\begin{definition}\label{def2}
System~\eqref{est} is called a functional $\mathscr{H}_{\infty}$ filter 
for~\eqref{sys} if for every solution $(x, u, v, \xi, \tilde{y}, z)$ 
of~\eqref{sys}, there exist $w \in \mathfrak{C}^{1}(\mathtt{I};\mathbb{R}^{l_{1}})$ 
and $\hat{z} \in \mathfrak{C}(\mathtt{I};\mathbb{R}^{p})$ such that $(w, u, \Tilde{y}, \hat{z})$ satisfy \eqref{est} for every $t \in \mathtt{I}$, and for all such $w,~ \hat{z}$ the following properties hold:
\begin{enumerate}
\item If $\xi \equiv 0$, then $z(t) - \hat{z}(t) \rightarrow 0$ 
as $t \rightarrow \infty$.
\item If $\xi \not\equiv 0$, then
\begin{eqnarray*}
\|z - \hat{z}\|^2_{2} < \gamma^2\|\xi\|^2_{2} + \beta,
\end{eqnarray*}
for some constants $\gamma > 0$ and $\beta \geq 0$.
\end{enumerate}
\end{definition}

Mathematically, this paper aims to determine the parameter matrices  $N$, $H_1$, $L_1$, $F_1$, $R$, $M_1$, and $M_2$
such that system \eqref{est} serves as a functional
$\mathscr{H}_{\infty}$ filter for system \eqref{sys}, cf. Definition \ref{def2}. The filter structure \eqref{est} is favored for its straightforward implementation and its flexibility in choosing arbitrary initial conditions, in contrast to DAE-based filters \cite{zhang2015improved}, whose implicit formulation necessitates consistent initial conditions.

We now restate the following matrix theory results, which are essential for the subsequent discussion.
\begin{definition}\label{def:detect} \cite{chen1984linear}
The matrix pair $(A, C)$ is called detectable if and only if
\begin{equation*}
\rank\begin{bmatrix}\lambda I - A \\ C\end{bmatrix} = n, 
\qquad \text{for all } \lambda \in \bar{\mathbb{C}}^+.
\end{equation*}
Equivalently, a matrix $Z$ of compatible dimension exists such 
that the matrix $A - ZC$ is Hurwitz (stable).
\end{definition}

\begin{lemma} \label{lem2} \cite{matsaglia1974equalities}
Let $\mathrm{U}, \mathrm{W}$, and $\mathrm{V}$ be matrices of compatible sizes. If $\mathrm{U}$ has full row rank and/or $\mathrm{V}$ has full column rank, then
\begin{eqnarray*}
\rho \begin{bmatrix}
         \mathrm{U} & \mathrm{W} \\
         0 & \mathrm{V}
\end{bmatrix} = \rho (\mathrm{U}) + \rho (\mathrm{V}).
\end{eqnarray*}
\end{lemma}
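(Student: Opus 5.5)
The plan is to reduce the block upper-triangular matrix to the block-diagonal matrix $\mathrm{blk\text{-}diag}(\mathrm{U},\mathrm{V})$ by multiplication with invertible block-triangular matrices. Such multiplications preserve rank. I will then use the elementary fact that $\rho\,\mathrm{blk\text{-}diag}(\mathrm{U},\mathrm{V})=\rho(\mathrm{U})+\rho(\mathrm{V})$. This fact holds because the nonzero columns of the two diagonal blocks live in complementary coordinate subspaces, so a maximal independent set of columns of the block-diagonal matrix is exactly the union of maximal independent sets for $\mathrm{U}$ and for $\mathrm{V}$. The two hypotheses are handled separately; since the claim is an ``and/or'' statement, either one suffices.

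\emph{Case 1: $\mathrm{U}$ has full row rank.} Then $\mathrm{U}\mathrm{U}^{+}=\mathbb{I}$, since the Moore--Penrose inverse is a right inverse of a full-row-rank matrix. I will right-multiply by the invertible matrix $\begin{bmatrix}\mathbb{I} & -\mathrm{U}^{+}\mathrm{W}\\ 0 & \mathbb{I}\end{bmatrix}$. This gives
\begin{equation*}
\begin{bmatrix}\mathrm{U} & \mathrm{W}\\ 0 & \mathrm{V}\end{bmatrix}\begin{bmatrix}\mathbb{I} & -\mathrm{U}^{+}\mathrm{W}\\ 0 & \mathbb{I}\end{bmatrix}=\begin{bmatrix}\mathrm{U} & \mathrm{W}-\mathrm{U}\mathrm{U}^{+}\mathrm{W}\\ 0 & \mathrm{V}\end{bmatrix}=\begin{bmatrix}\mathrm{U} & 0\\ 0 & \mathrm{V}\end{bmatrix}.
\end{equation*}

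\emph{Case 2: $\mathrm{V}$ has full column rank.} Then $\mathrm{V}^{+}\mathrm{V}=\mathbb{I}$, since $\mathrm{V}^{+}$ is a left inverse. I will left-multiply by the invertible matrix $\begin{bmatrix}\mathbb{I} & -\mathrm{W}\mathrm{V}^{+}\\ 0 & \mathbb{I}\end{bmatrix}$. This turns the $(1,2)$ block into $\mathrm{W}-\mathrm{W}\mathrm{V}^{+}\mathrm{V}=0$ and leaves the other blocks unchanged.

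In both cases the rank of the original matrix equals $\rho\,\mathrm{blk\text{-}diag}(\mathrm{U},\mathrm{V})=\rho(\mathrm{U})+\rho(\mathrm{V})$, which proves the lemma.

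There is no genuine obstacle. The only points needing care are the correct one-sided inverse property of $\mathrm{U}^{+}$ and $\mathrm{V}^{+}$ under each rank hypothesis, and the dimension bookkeeping for the transformation matrices. Any right inverse of $\mathrm{U}$ or left inverse of $\mathrm{V}$ would work equally well in place of the Moore--Penrose inverse.
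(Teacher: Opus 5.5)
Your proof is correct. When $\mathrm{U}$ has full row rank, $\mathrm{U}\mathrm{U}^{+}=\mathbb{I}$; when $\mathrm{V}$ has full column rank, $\mathrm{V}^{+}\mathrm{V}=\mathbb{I}$. Your block-unitriangular multipliers have the right sizes and are invertible, and the rank of $\text{blk-diag}(\mathrm{U},\mathrm{V})$ is additive.

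The paper gives no proof of this lemma; it only cites \cite{matsaglia1974equalities}. In that reference the statement is a corollary of the general identity
\[
\rho\begin{bmatrix}\mathrm{U} & \mathrm{W}\\ 0 & \mathrm{V}\end{bmatrix}=\rho(\mathrm{U})+\rho(\mathrm{V})+\rho\big((\mathbb{I}-\mathrm{U}\mathrm{U}^{-})\mathrm{W}(\mathbb{I}-\mathrm{V}^{-}\mathrm{V})\big),
\]
which holds for any generalized inverses $\mathrm{U}^{-},\mathrm{V}^{-}$. Under either hypothesis one of the two projectors is zero, so the last term vanishes. Your two eliminations are the first steps of the derivation of this identity, in the special case where the residual block is already zero.

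Your route is therefore the more elementary and self-contained one. The general formula additionally measures how far the rank can exceed $\rho(\mathrm{U})+\rho(\mathrm{V})$ when neither hypothesis holds, which shows the hypothesis cannot simply be dropped. For example, $\mathrm{U}=\mathrm{V}=0$ and $\mathrm{W}=1$ give rank $1$, not $0$.
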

\begin{lemma}\label{lem4} \cite{piziak2007matrix}
  System $\mathrm{U} \mathrm{W}=\mathrm{V}$ has a solution for $\mathrm{U}$ if and only if $\mathrm{V}=\mathrm{V} \mathrm{W}^{+} \mathrm{W}$, or equivalently, $\rho \begin{bmatrix}
  \mathrm{W} \\ \mathrm{V}
  \end{bmatrix} = \rho (\mathrm{W})$. Furthermore, 
$$
\mathrm{U}=\mathrm{V} \mathrm{W}^{+}-\mathrm{Z} (\mathbb{I}-\mathrm{W} \mathrm{W}^{+} ),
$$
where $\mathrm{Z}$ is a free matrix parameter with a compatible dimension.
\end{lemma}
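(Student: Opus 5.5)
The plan is to argue directly from the four Penrose identities defining $\mathrm{W}^{+}$, chiefly $\mathrm{W}\mathrm{W}^{+}\mathrm{W}=\mathrm{W}$. The one additional fact needed is that $\mathrm{W}^{+}\mathrm{W}$ is the orthogonal projector onto the row space $\row(\mathrm{W})$. This holds because it is symmetric and idempotent, and its row space is contained in $\row(\mathrm{W})$ and contains $\row(\mathrm{W}\mathrm{W}^{+}\mathrm{W})=\row(\mathrm{W})$.

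\emph{Solvability criterion.} If $\mathrm{U}\mathrm{W}=\mathrm{V}$ for some $\mathrm{U}$, then
\[
\mathrm{V}\mathrm{W}^{+}\mathrm{W}=\mathrm{U}\mathrm{W}\mathrm{W}^{+}\mathrm{W}=\mathrm{U}\mathrm{W}=\mathrm{V}.
\]
Conversely, if $\mathrm{V}=\mathrm{V}\mathrm{W}^{+}\mathrm{W}$, then $\mathrm{U}=\mathrm{V}\mathrm{W}^{+}$ is an explicit solution.

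\emph{Rank characterization.} The identity $\mathrm{V}=\mathrm{V}\mathrm{W}^{+}\mathrm{W}$ says that projecting the rows of $\mathrm{V}$ onto $\row(\mathrm{W})$ leaves them unchanged. This is equivalent to $\row(\mathrm{V})\subseteq\row(\mathrm{W})$. That inclusion holds precisely when appending the rows of $\mathrm{V}$ to $\mathrm{W}$ does not enlarge the row space, i.e.
\[
\rho\begin{bmatrix}\mathrm{W}\\ \mathrm{V}\end{bmatrix}=\rho(\mathrm{W}).
\]
For the reverse direction, I would note that equal ranks together with $\row(\mathrm{W})\subseteq\row\begin{bmatrix}\mathrm{W}\\ \mathrm{V}\end{bmatrix}$ force these two row spaces to coincide.

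\emph{General solution.} First, every matrix of the stated form is a solution. Using $(\mathbb{I}-\mathrm{W}\mathrm{W}^{+})\mathrm{W}=0$ and the solvability condition,
\[
\bigl(\mathrm{V}\mathrm{W}^{+}-\mathrm{Z}(\mathbb{I}-\mathrm{W}\mathrm{W}^{+})\bigr)\mathrm{W}=\mathrm{V}\mathrm{W}^{+}\mathrm{W}=\mathrm{V}.
\]
Second, every solution has this form. Given any solution $\mathrm{U}$, split
\[
\mathrm{U}=\mathrm{U}\mathrm{W}\mathrm{W}^{+}+\mathrm{U}(\mathbb{I}-\mathrm{W}\mathrm{W}^{+})=\mathrm{V}\mathrm{W}^{+}-\mathrm{Z}(\mathbb{I}-\mathrm{W}\mathrm{W}^{+})
\]
with $\mathrm{Z}=-\mathrm{U}$. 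So the parametrization is exhaustive, and the sign convention is immaterial since $\mathrm{Z}$ is free.

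None of these steps is a real obstacle. The only point needing care is the projector interpretation of $\mathrm{W}^{+}\mathrm{W}$ used in the rank equivalence.
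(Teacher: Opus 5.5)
Your proof is correct and complete. The necessity and sufficiency step, the verification that every matrix of the stated form solves $\mathrm{U}\mathrm{W}=\mathrm{V}$, the exhaustiveness argument with $\mathrm{Z}=-\mathrm{U}$, and the row-space reading of the rank condition all check out.

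The paper gives no proof of its own here; it imports the lemma from Piziak and Odell, and your route is the standard Penrose-identity argument behind that citation. One small point: you do not need the projector property of $\mathrm{W}^{+}\mathrm{W}$. The identity $\mathrm{V}=(\mathrm{V}\mathrm{W}^{+})\mathrm{W}$ already gives the row-space inclusion. Conversely, $\mathrm{V}=\mathrm{X}\mathrm{W}$ yields $\mathrm{V}\mathrm{W}^{+}\mathrm{W}=\mathrm{X}\mathrm{W}\mathrm{W}^{+}\mathrm{W}=\mathrm{V}$.
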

\begin{lemma}\label{lem:fcr} \cite{matsaglia1974equalities}
Let $\mathrm{M}$ and $\mathrm{N}$ be any two matrices of compatible
dimensions. Then $\rho(\mathrm{M}\mathrm{N})=\rho(\mathrm{N})$ if
and only if the matrix
$\begin{bmatrix}\mathrm{M}\\ \mathbb{I}-\mathrm{N}\mathrm{N}^{+}
\end{bmatrix}$ has full column rank (FCR).
\end{lemma}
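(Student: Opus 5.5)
The plan is to translate both conditions into the same statement about subspaces, namely that $\ker(\mathrm{M}) \cap \operatorname{range}(\mathrm{N}) = \{0\}$. The argument uses only rank--nullity and the standard Moore--Penrose identity that $\mathrm{N}\mathrm{N}^{+}$ is the orthogonal projector onto $\operatorname{range}(\mathrm{N})$. Let $\mathrm{N} \in \mathbb{R}^{a \times b}$ and $\mathrm{M} \in \mathbb{R}^{c \times a}$.

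\emph{Step 1: the rank condition.} Since $\ker(\mathrm{N}) \subseteq \ker(\mathrm{M}\mathrm{N})$ and both matrices have $b$ columns, rank--nullity gives
\[
\rho(\mathrm{M}\mathrm{N}) = \rho(\mathrm{N}) \iff \ker(\mathrm{M}\mathrm{N}) = \ker(\mathrm{N}).
\]
We claim the kernel equality holds iff no nonzero vector of $\operatorname{range}(\mathrm{N})$ is annihilated by $\mathrm{M}$.
\begin{itemize}
\item Suppose $y = \mathrm{N}\zeta \neq 0$ and $\mathrm{M}y = 0$. Then $\zeta \in \ker(\mathrm{M}\mathrm{N}) \setminus \ker(\mathrm{N})$, so the kernels differ.
\item Conversely, suppose $\zeta \in \ker(\mathrm{M}\mathrm{N})$ but $\mathrm{N}\zeta \neq 0$. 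Then $y = \mathrm{N}\zeta$ is a nonzero vector of $\operatorname{range}(\mathrm{N})$ with $\mathrm{M}y = 0$.
\end{itemize}
Hence $\rho(\mathrm{M}\mathrm{N}) = \rho(\mathrm{N})$ iff $\ker(\mathrm{M}) \cap \operatorname{range}(\mathrm{N}) = \{0\}$.

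\emph{Step 2: the kernel of the stacked matrix.} Consider the map $x \in \mathbb{R}^{a} \mapsto \begin{bmatrix}\mathrm{M}x \\ (\mathbb{I}-\mathrm{N}\mathrm{N}^{+})x\end{bmatrix}$. Its kernel consists of the $x$ with $\mathrm{M}x = 0$ and $x = \mathrm{N}\mathrm{N}^{+}x$. Because $\mathrm{N}\mathrm{N}^{+}$ is the orthogonal projector onto $\operatorname{range}(\mathrm{N})$, the second condition holds iff $x \in \operatorname{range}(\mathrm{N})$. One direction uses $\mathrm{N}\mathrm{N}^{+}\mathrm{N} = \mathrm{N}$; the other is immediate, since $x = \mathrm{N}(\mathrm{N}^{+}x)$. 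Therefore
\[
\ker \begin{bmatrix}\mathrm{M}\\ \mathbb{I}-\mathrm{N}\mathrm{N}^{+}\end{bmatrix} = \ker(\mathrm{M}) \cap \operatorname{range}(\mathrm{N}).
\]
The stacked matrix has full column rank iff this kernel is trivial.

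\emph{Conclusion.} Combining Steps 1 and 2 gives the equivalence. No step is a real obstacle. The only point requiring care is the identification $\ker(\mathbb{I}-\mathrm{N}\mathrm{N}^{+}) = \operatorname{range}(\mathrm{N})$, which follows from the Penrose identity $\mathrm{N}\mathrm{N}^{+}\mathrm{N} = \mathrm{N}$.
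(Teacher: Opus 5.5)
Your argument is correct, and it takes a different route from the paper only in the sense that the paper gives no proof of this lemma at all: it imports it from Marsaglia--Styan \cite{matsaglia1974equalities}, where results of this type are read off from block-rank equalities.

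\textbf{The block-rank route.} One way to obtain the lemma in that framework is to use $\rho\begin{bmatrix}\mathrm{A}\\ \mathrm{C}\end{bmatrix}=\rho(\mathrm{C})+\rho\big(\mathrm{A}(\mathbb{I}-\mathrm{C}^{+}\mathrm{C})\big)$ with $\mathrm{A}=\mathrm{M}$ and $\mathrm{C}=\mathbb{I}-\mathrm{N}\mathrm{N}^{+}$. Since $\mathrm{C}$ is a symmetric idempotent, $\mathbb{I}-\mathrm{C}^{+}\mathrm{C}=\mathrm{N}\mathrm{N}^{+}$. This gives $\rho\begin{bmatrix}\mathrm{M}\\ \mathbb{I}-\mathrm{N}\mathrm{N}^{+}\end{bmatrix}=a-\rho(\mathrm{N})+\rho(\mathrm{M}\mathrm{N})$, and the lemma is the case where this rank equals $a$.

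\textbf{What your route buys.} Your subspace argument is the elementary counterpart of this. Add one line: rank--nullity for $\mathrm{M}$ restricted to $\operatorname{range}(\mathrm{N})$, whose image is $\operatorname{range}(\mathrm{M}\mathrm{N})$, gives $\dim(\ker\mathrm{M}\cap\operatorname{range}\mathrm{N})=\rho(\mathrm{N})-\rho(\mathrm{M}\mathrm{N})$. Combined with your Step 2, this yields the same quantitative identity, not just the equivalence.

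Your route also makes two points explicit that the citation hides:
\begin{itemize}
\item Only the Penrose identity $\mathrm{N}\mathrm{N}^{+}\mathrm{N}=\mathrm{N}$ is used. The orthogonality of the projector you mention plays no role, so any generalized inverse would do.
\item The argument works verbatim over $\mathbb{C}$. This is what Part 2) of the proof of Lemma~\ref{lem:detect} needs, because there the lemma is applied to a stacked matrix containing $-\lambda\mathbb{I}$ with $\lambda\in\overline{\mathbb{C}}^{+}$.
\end{itemize}
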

\begin{lemma}\label{lem:blockrank}
Let $\mathrm{A}$, $\mathrm{B}$, $\mathrm{C}$, and $\mathrm{D}$ be
any matrices of compatible dimensions such that
$\rho\begin{bmatrix}\mathrm{A} & \mathrm{B}\\ \mathrm{C} &
\mathrm{D}\end{bmatrix}=\rho\begin{bmatrix}\mathrm{A} & \mathrm{B}
\end{bmatrix}$. Then
$\rho\begin{bmatrix}\mathrm{A}\\ \mathrm{C}\end{bmatrix}
=\rho(\mathrm{A})$ and
$\rho\begin{bmatrix}\mathrm{B}\\ \mathrm{D}\end{bmatrix}
=\rho(\mathrm{B})$.
\end{lemma}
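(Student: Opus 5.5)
Write $\mathrm{X}:=\begin{bmatrix}\mathrm{A} & \mathrm{B}\end{bmatrix}$ and $\mathrm{Y}:=\begin{bmatrix}\mathrm{C} & \mathrm{D}\end{bmatrix}$. The plan is to turn the rank hypothesis into a row-space inclusion and then restrict it to column blocks. The hypothesis $\rho\begin{bmatrix}\mathrm{X}\\ \mathrm{Y}\end{bmatrix}=\rho(\mathrm{X})$ says exactly that appending the rows of $\mathrm{Y}$ to $\mathrm{X}$ does not enlarge the row space. Hence every row of $\mathrm{Y}$ lies in the row space of $\mathrm{X}$.

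Formally, I will invoke Lemma~\ref{lem4} with $\mathrm{W}=\mathrm{X}$ and $\mathrm{V}=\mathrm{Y}$. It guarantees a matrix $\mathrm{U}$ of compatible size with $\mathrm{U}\mathrm{X}=\mathrm{Y}$, that is,
\begin{equation*}
\mathrm{U}\begin{bmatrix}\mathrm{A} & \mathrm{B}\end{bmatrix}=\begin{bmatrix}\mathrm{C} & \mathrm{D}\end{bmatrix}.
\end{equation*}
Comparing the two column blocks gives $\mathrm{U}\mathrm{A}=\mathrm{C}$ and $\mathrm{U}\mathrm{B}=\mathrm{D}$ with the same multiplier $\mathrm{U}$.

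Now apply Lemma~\ref{lem4} in the reverse direction, once to each block. The equation $\mathrm{U}\mathrm{A}=\mathrm{C}$ is solvable in $\mathrm{U}$, so $\rho\begin{bmatrix}\mathrm{A}\\ \mathrm{C}\end{bmatrix}=\rho(\mathrm{A})$. Likewise, $\mathrm{U}\mathrm{B}=\mathrm{D}$ is solvable, so $\rho\begin{bmatrix}\mathrm{B}\\ \mathrm{D}\end{bmatrix}=\rho(\mathrm{B})$.

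Alternatively, one can argue directly from
\begin{equation*}
\begin{bmatrix}\mathrm{A}\\ \mathrm{C}\end{bmatrix}=\begin{bmatrix}\mathbb{I}\\ \mathrm{U}\end{bmatrix}\mathrm{A}.
\end{equation*}
This gives $\rho\begin{bmatrix}\mathrm{A}\\ \mathrm{C}\end{bmatrix}\le\rho(\mathrm{A})$, and the reverse inequality is trivial because $\mathrm{A}$ is a submatrix.

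No step is a real obstacle. The only point needing care is the first one: equality of ranks must be converted into the existence of a single left multiplier $\mathrm{U}$ that acts on both column blocks simultaneously. Lemma~\ref{lem4} supplies exactly this, so the remainder is bookkeeping.
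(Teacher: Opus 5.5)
Your proof is correct and complete. The paper gives no proof of Lemma~\ref{lem:blockrank}: it is stated as an elementary fact, without even the citation attached to the neighbouring lemmas, so there is no argument to compare yours against. Your argument supplies that missing justification, and it is the standard one. You use Lemma~\ref{lem4} to turn the rank equality into a single left multiplier $\mathrm{U}$ with $\mathrm{U}\begin{bmatrix}\mathrm{A} & \mathrm{B}\end{bmatrix}=\begin{bmatrix}\mathrm{C} & \mathrm{D}\end{bmatrix}$, and then read off $\mathrm{U}\mathrm{A}=\mathrm{C}$ and $\mathrm{U}\mathrm{B}=\mathrm{D}$.

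Your second route is the cleaner of the two. It factors $\begin{bmatrix}\mathrm{A}\\ \mathrm{C}\end{bmatrix}=\begin{bmatrix}\mathbb{I}\\ \mathrm{U}\end{bmatrix}\mathrm{A}$, and together with the row-space inclusion this is field-independent and needs no Moore--Penrose machinery. That matters because the paper invokes the lemma in the proof of Lemma~\ref{lem:final-rank} with complex $\lambda\in\overline{\mathbb{C}}^{+}$.
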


We conclude this section by introducing the following two assumptions about system (1).

\begin{assumption}\label{assum:1}
    The nonlinear term $g(\zeta, t)$ of system \eqref{sys} satisfies the  $\delta \mathrm{QC}$ condition:
\begin{eqnarray}\label{ineq}
\begin{bmatrix}
    \zeta_1- \zeta_2 \\
g( \zeta_1, t)-g( \zeta_{2}, t)
\end{bmatrix}^\top \mathcal{W}\begin{bmatrix}
 \zeta_1- \zeta_2 \\
g( \zeta_1, t)-g( \zeta_{2}, t)
\end{bmatrix} \geq 0 .
\end{eqnarray}
The known $\delta\mathrm{MM}$ $\mathcal{W}$ describes the incremental characteristics of $g(\zeta, t)$ and is partitioned as follows
\begin{eqnarray}\label{imm}
     \mathcal{W}= \begin{bmatrix}
\mathcal{W}_{11} & \mathcal{W}_{12} \\
\mathcal{W}_{12}^\top & \mathcal{W}_{22}
 \end{bmatrix},
\end{eqnarray}
where $\mathcal{W}_{11}$ and $\mathcal{W}_{22}$ are symmetric matrices of suitable dimensions. Table~\ref{table:IMM} provides representative $\delta\mathrm{MM}$ structures for several commonly encountered classes of nonlinearities, highlighting that these structures arise as special cases of the general $\delta\mathrm{QC}$ condition \eqref{ineq}.

\begin{table}[ht]
\caption{Nonlinearity classes and corresponding $\delta \mathrm{MM}$.}
\label{table:IMM}
\tablefont
\begin{tabular*}{20.25pc}{@{}p{82pt}p{90pt}<{\raggedright}p{52pt}<{\raggedright}@{}}
\hline\\[-1.5pc]\hline
\centerline{$\delta \mathrm{MM}$} &
\centerline{Condition} &
\centerline{Nonlinearity} \\[3pt]
$\scriptscriptstyle
\begin{pmatrix}
\lambda_{g}^{2}I & 0 \\[1pt]
0 & -I
\end{pmatrix}$ &
\centerline{$\|\Delta g\|\leq\lambda_{g}\|\Delta \zeta\|$} &
\centerline{Lipschitz} \\[22pt]
$\scriptscriptstyle
\begin{pmatrix}
\rho I & -\frac{1}{2}I \\[1pt]
-\frac{1}{2}I & 0
\end{pmatrix}$ &
\centerline{$(\Delta \zeta)^{\top}\!\Delta g\leq\rho\|\Delta \zeta\|^{2}$} &
\centerline{OSL} \\[22pt]
$\scriptscriptstyle
\begin{pmatrix}
\beta I & \frac{1}{2}\gamma I \\[1pt]
\frac{1}{2}\gamma I & -I
\end{pmatrix}$ &
\centerline{$\|\Delta g\|^{2}\leq\beta\|\Delta \zeta\|^{2}$}
\centerline{${}+\gamma(\Delta \zeta)^{\top}\!\Delta g$} &
\centerline{QIB} \\[22pt]
$\scriptscriptstyle
\begin{pmatrix}
0 & I \\[1pt]
I & 0
\end{pmatrix}$ &
\centerline{$(\Delta \zeta)^{\top}\!\Delta g\geq 0$} &
\centerline{Monotone} \\[22pt]
$\scriptscriptstyle
\begin{pmatrix}
-\mu I & I \\[1pt]
I & 0
\end{pmatrix}$ &
\centerline{$(\Delta \zeta)^{\top}\!\Delta g\geq\frac{\mu}{2}\|\Delta \zeta\|^{2}$} &
\centerline{Gen.\ monotone} \\[22pt]
\hline\\[-1.5pc]\hline
\end{tabular*}
\begin{minipage}{20.25pc}
\vspace{3pt}
{\scriptsize
Here $\Delta \zeta = \zeta_1 - \zeta_2$,\ 
$\Delta g = g(\zeta_1,t)-g(\zeta_2,t)$,\
$\lambda_g>0$,\ $\rho,\mu\in\mathbb{R}$,\ 
and $\beta,\gamma\in\mathbb{R}$.
}
\end{minipage}
\end{table}
\end{assumption}
\begin{remark}\label{rem11}
    It is worth noting that if a matrix $\mathcal{W}$ satisfies \eqref{ineq}, then any positively scaled matrix $\kappa\mathcal{W}$, with $\kappa>0$, also satisfies \eqref{ineq}. A similar discussion can be found in \cite{chakrabarty2017state}.
\end{remark}

\begin{assumption}\label{assum:2} The coefficient matrices of system \eqref{sys} satisfy 
\begin{eqnarray}\label{rank1}
\rank(\Psi) = \rank (\Omega),
\end{eqnarray}
where $\Psi =\begin{bmatrix}
\mathcal{E}   & \mathcal{A} & 0 & \mathcal{D} & 0  & \mathcal{\bar{D}_{\xi}} & \mathcal{F}\\
0  & \mathcal{E} & \mathcal{A}  & 0 & \mathcal{D}&0 & 0\\
0  & \mathcal{C} & 0  & \mathcal{G} & 0 &0 & 0\\
0  & 0 & \mathcal{C} & 0  & \mathcal{G} &0 & 0\\
0  & 0  & -\mathcal{K} & 0 & 0 &0 & 0  \\
0 & \mathcal{K}  & 0 & 0 &  0 &0 & 0
\end{bmatrix}$ and $\Omega = \begin{bmatrix}
\mathcal{E}   & \mathcal{A} & 0 & \mathcal{D} & 0  & \mathcal{\bar{D}_{\xi}} & \mathcal{F}\\
0  & \mathcal{E} & \mathcal{A}  & 0 & \mathcal{D}&0 & 0\\
0  & \mathcal{C} & 0  & \mathcal{G} & 0 &0 & 0\\
0  & 0 & \mathcal{C} & 0  & \mathcal{G} &0 & 0\\
0  & 0  & -\mathcal{K} & 0 & 0 &0 & 0  
\end{bmatrix}$.
\end{assumption}

Before providing a remark on the above Assumption~\ref{assum:2}, we summarize in Table~\ref{tab:rank} below, the rank conditions assumed in previous works on full order state estimation for nonlinear descriptor systems.
\begin{table}[h!]
\caption{Rank conditions in the existing literature.}
\label{tab:rank}
\tablefont
\begin{tabular*}{20.25pc}{@{}p{52pt}p{155pt}<{\raggedright}@{}}
\hline\\[-1.5pc]\hline
\centerline{\textbf{Reference}} &
\centerline{\textbf{Rank Condition}} \\[3pt]
\centerline{Liu \emph{et al.}~\cite{liu2023state}} &
\centerline{$\rank\begin{bmatrix}
\mathcal{E}^\top&\mathcal{C}^\top
\end{bmatrix}^\top=n$} \\[20pt]
\centerline{Darouach \emph{et al.}~\cite{darouach2011}} &
\centerline{$\rank\begin{bmatrix}
\mathcal{E}^\top&(\Phi\mathcal{A})^\top&\mathcal{C}^\top
\end{bmatrix}^\top=n$}  \\[15pt]
\centerline{Gupta \emph{et al.}~\cite{gupta2018unknown}} &
\scalebox{0.9}{$\rank\begin{bmatrix}
\mathcal{E} & \mathcal{F} & \mathcal{A} & \mathcal{D} & 0 \\
0 & 0 & \mathcal{E} & 0 & \mathcal{D} \\
0 & 0 & \mathcal{C} & \mathcal{G} & 0 \\
0 & 0 & 0 & 0 & \mathcal{G}
\end{bmatrix}
=\rank\begin{bmatrix}
\mathcal{E} & \mathcal{F} & \mathcal{D} \\
0 & 0 & \mathcal{G}
\end{bmatrix}+n+q$} \\[40pt]
\hline\\[-1.5pc]\hline
\end{tabular*}
\begin{minipage}{20.25pc}
\vspace{3pt}
{\scriptsize
Here $\Phi$ is full row rank satisfying $\Phi\begin{bmatrix}
\mathcal{E}&\mathcal{F}
\end{bmatrix}=0$}
\end{minipage}
\end{table}

\begin{remark}
Assumption~\ref{assum:2} plays a fundamental role in the proposed filter design. In particular, it guarantees the solvability of the filter design matrix equations (cf.~\eqref{cond}), thereby ensuring that the estimation error dynamics (cf.~\eqref{et}) is well defined.
Notably, when $\mathcal{K}=\mathbb{I}_{n}$,
Assumption~\ref{assum:2} reduces to
\begin{equation}\tag{A2.1}\label{eq:assum2red}
\scalebox{0.95}{$\rank
\begin{bmatrix}
\mathcal{E} & \mathcal{F} & \mathcal{A} & \mathcal{D}
& 0 & \bar{\mathcal{D}}_{\xi} \\
0 & 0 & \mathcal{E} & 0 & \mathcal{D} & 0\\
0 & 0 & \mathcal{C} & \mathcal{G} & 0 & 0\\
0 & 0 & 0 & 0 & \mathcal{G} & 0
\end{bmatrix}
=
\rank
\begin{bmatrix}
\mathcal{E} & \mathcal{F} & \mathcal{D} & 0
& \bar{\mathcal{D}}_{\xi} \\
0 & 0 & 0 & \mathcal{D} & 0\\
0 & 0 & \mathcal{G} & 0 & 0\\
0 & 0 & 0 & \mathcal{G} & 0
\end{bmatrix}
+\,n$}
\end{equation}
Setting $\bar{\mathcal{D}}_{\xi}=0$ in \eqref{eq:assum2red}, 
it is easy to verify that the rank condition of 
\cite{gupta2018unknown} is more restrictive than the 
resulting condition. Further setting $\mathcal{D}=0$ and 
$\mathcal{G}=0$ in \eqref{eq:assum2red} yields
\begin{equation}\tag{A2.2}\label{eq:darouach}
\rank
\begin{bmatrix}
\mathcal{E}^{\top} & (\Phi\mathcal{A})^{\top} & 
\mathcal{C}^{\top}
\end{bmatrix}^{\top}=n,
\end{equation}
where $\Phi$ is a full-row-rank matrix satisfying 
$\Phi\begin{bmatrix}
    \mathcal{E}&\mathcal{F}
\end{bmatrix}=0$, which is precisely 
the rank condition of \cite{darouach2011}. Moreover, the rank condition of 
\cite{liu2023state} in Table~\ref{tab:rank} is strictly stronger than \eqref{eq:darouach}.
Thus, as summarized in Table~\ref{tab:rank}, the 
rank conditions employed in \cite{liu2023state}, 
\cite{darouach2011}, and \cite{gupta2018unknown} are 
either special cases of, or strictly stronger than, the 
proposed rank condition (cf. Assumption~\ref{assum:2}). It is worth noting that the condition of \cite{darouach2011} is equivalent 
to impulse observability of the linear descriptor system 
(\emph{i.e.}, $g\equiv 0$ in \eqref{sys}), which constitutes 
a standard assumption in the observer design literature 
for linear descriptor systems; see, e.g., 
\cite{darouach1995design,
berger2017observers} and the references therein.
\end{remark}


\section{Functional $\mathscr{H}_{\infty}$ filter design}\label{main}

In order to design a functional $\mathscr{H}_{\infty}$ filter, we first transform the given system \eqref{sys} into a new coordinate system.
If $\rank\begin{bmatrix}
\mathcal{E}  & \mathcal{\bar{D}}_{\xi} & \mathcal{F} \end{bmatrix} = n_0 \leq n$, then by using its singular value decomposition (SVD), there exist a non-singular matrix $\mathcal{P}$ such that $\mathcal{P}\begin{bmatrix}
\mathcal{E}  & \mathcal{\bar{D}}_{\xi} & \mathcal{F} \end{bmatrix}= \begin{bmatrix}
\mathcal{E}_{1} & \mathcal{D}_{\xi} &\mathcal{F}_{1} \\ 0 & 0 & 0
\end{bmatrix}$, where $\begin{bmatrix}
\mathcal{E}_{1}  & \mathcal{D}_{\xi} & \mathcal{F}_{1} \end{bmatrix}$ is the full-row rank matrix, i.e., $\rank\begin{bmatrix}
\mathcal{E}_{1}  & \mathcal{D}_{\xi} & \mathcal{F}_{1} \end{bmatrix}= n_0$. Take $\mathcal{P} \mathcal{A}=\begin{bmatrix}
 \mathcal{A}_{1}\\ \mathcal{A}_{2}\end{bmatrix}$, $\mathcal{P} \mathcal{B}=\begin{bmatrix}
 \mathcal{B}_{1}\\\mathcal{B}_{2} \end{bmatrix}$, and $\mathcal{P} \mathcal{D}=\begin{bmatrix}
 \mathcal{D}_{1}\\ \mathcal{D}_{2} \end{bmatrix}$, then \eqref{sys} can be rewritten as
\begin{eqnarray*}
\mathcal{E}_{1}\dot{x}(t) &=& \mathcal{A}_{1}x(t)+\mathcal{B}_{1}u(t)
+\mathcal{D}_{1}v(t)+\mathcal{D}_{\xi}\xi(t)\\ \notag
&&+\mathcal{F}_{1}g(H\mathcal{K}x,t), \label{sysPa}\\
0 &=& \mathcal{A}_{2}x(t)+\mathcal{B}_{2}u(t)+\mathcal{D}_{2}v(t),
\label{sysPb}\\
\Tilde{y}(t) &= & \mathcal{C}x(t)+\mathcal{G}v(t),    \label{sysPc}\\
	   z(t) &= & \mathcal{K}x(t),    \label{sysPd}
\end{eqnarray*}
Next, the algebraic constraint is rearranged by moving 
$\mathcal{B}_{2}u(t)$ to the left-hand side and then combined with the measured output equation to define the augmented output $y(t) = 
\begin{bmatrix}-\mathcal{B}_{2}u(t) \\ \tilde{y}(t)\end{bmatrix}$.
Consequently, the above system can be rewritten as 
 \allowdisplaybreaks
\begin{subequations} \label{sys1}
     \begin{eqnarray}
         \mathcal{E}_{1}\dot{x}(t) &= & \mathcal{A}_{1}x(t)+\mathcal{B}_{1}u(t)+\mathcal{D}_{1}v(t)+\mathcal{D}_{\xi}\xi(t)\\ \notag
        && + \mathcal{F}_{1}g(H\mathcal{K}x,t), \label{sys1a}\\
	   y(t) &= & \mathcal{C}_{1}x(t)+\mathcal{G}_{1}v(t),    \label{sys1b}\\
	   z(t) &= & \mathcal{K}x(t),    \label{sys1c}
    \end{eqnarray}
\end{subequations}
where  $\mathcal{E}_{1}$, $\mathcal{A}_{1} \in \mathbb{R}^{n_0 \times n}$, $\mathcal{B}_{1} \in \mathbb{R}^{n_0 \times k}$, $\mathcal{D}_{1} \in \mathbb{R}^{n_0 \times q}$, $\mathcal{{D}}_{\xi} \in \mathbb{R}^{n_0 \times s}$, $\mathcal{F}_{1} \in \mathbb{R}^{n_0 \times l}$, $\mathcal{C}_{1} = \begin{bmatrix}
     \mathcal{A}_{2} \\ \mathcal{C}
\end{bmatrix} \in \mathbb{R}^{r_1 \times n}$, $\mathcal{G}_{1} = \begin{bmatrix}
  \mathcal{D}_{2} \\ \mathcal{G}
 \end{bmatrix} \in \mathbb{R}^{r_1 \times q}$ are constant matrices and $y(t) \in \mathbb{R}^{r_1}$ with $r_1=m+r-n_0$. Furthermore, let $\rank
(\mathcal{G}_{1}) = q_1 \leq q$. Consequently, considering the SVD of $\mathcal{G}_{1}$, we obtain two invertible matrices $\mathcal{U}$ and $\mathcal{V}$ of suitable dimensions such that:  
\begin{eqnarray}\label{SVD}
\mathcal{U}\mathcal{G}_{1}\mathcal{V}=\begin{bmatrix}
\mathbb{I}_{q_1} & 0 \\  0     & 0
\end{bmatrix}.  
\end{eqnarray}
Thus, by pre-multiplying \eqref{sys1b} with $\mathcal{U}$ and assuming that $v(t) = \mathcal{V} \begin{bmatrix}
      v_1(t) \\ v_2(t)
  \end{bmatrix}$, we obtain
  \begin{eqnarray*}
      \begin{bmatrix}
      y_1(t) \\ y_2(t)
  \end{bmatrix}= \begin{bmatrix}
      \mathcal{C}_{11} \\ \mathcal{C}_{12}
  \end{bmatrix}x(t) + \begin{bmatrix}
\mathbb{I}_{q_1} & 0 \\  0     & 0
\end{bmatrix}\begin{bmatrix}
      v_1(t) \\ v_2(t)
  \end{bmatrix},
  \end{eqnarray*}
  where $~\mathcal{U}y(t)=\begin{bmatrix}
      y_1(t) \\ y_2(t)
  \end{bmatrix}$, $~\mathcal{U}\mathcal{C}_{1}=\begin{bmatrix}
      \mathcal{C}_{11} \\ \mathcal{C}_{12}
  \end{bmatrix}$ with suitable dimensional partitions. Thus, system \eqref{sys1} can be rewritten as
\allowdisplaybreaks
 
 \begin{eqnarray*}
         \mathcal{E}_{1}\dot{x}(t) &= &  \mathcal{A}_{1} x(t)+\mathcal{B}_{1}u(t)+\mathcal{D}_{11}v_{1}(t)\\ \notag
        && +\mathcal{D}_{12}v_2(t) +\mathcal{D}_{\xi}\xi(t)+ \mathcal{F}_{1}g(H\mathcal{K}x,t),  \label{syssa}\\
	   y_{1}(t) &= & \mathcal{C}_{11}x(t)+v_1(t),  \label{syssb}   \\
        y_{2}(t) &= & \mathcal{C}_{12}x(t), \label{syssc}\\
	   z(t) &= & \mathcal{K}x(t),   
    \end{eqnarray*}
where $ \mathcal{D}_{1}\mathcal{V}=\begin{bmatrix}
      \mathcal{D}_{11} & \mathcal{D}_{12}
  \end{bmatrix}$. Using $v_1(t)= y_1(t)-\mathcal{C}_{11}x(t)$, we obtain
\allowdisplaybreaks
\begin{subequations} \label{sys2}
     \begin{eqnarray}
         \mathcal{E}_{1}\dot{x}(t) &= & \Phi x(t)+\mathcal{B}_{1}u(t)+\mathcal{D}_{11}y_{1}(t) \notag\\
        && +\mathcal{D}_{12}v_2(t) +\mathcal{D}_{\xi}\xi(t)+ \mathcal{F}_{1}g(H\mathcal{K}x,t), \label{sys2a}\\
	   y_{2}(t) &= & \mathcal{C}_{12}x(t),    \label{sys2b}\\
	   z(t) &= & \mathcal{K}x(t),    \label{sys2c}
    \end{eqnarray}
\end{subequations}
where $\Phi= \mathcal{A}_{1}-\mathcal{D}_{11}\mathcal{C}_{11}$.
  
\noindent We now consider the following two cases. \\   
\noindent\textbf{Case~1:} $\rho \begin{bmatrix}\mathcal{C}_{12}^\top & 
\mathcal{K}^\top\end{bmatrix}^\top \neq \rho(\mathcal{C}_{12}) 
+ \rho(\mathcal{K})$. Then some rows of $\mathcal{K}$ lie in 
the row space of $\mathcal{C}_{12}$, and the corresponding 
components of $z(t)$ are directly available from $y_2(t)$ 
without estimation. Consequently, there exist a permutation 
matrix $\mathcal{Q}$ and matrices $\mathcal{S}_{11}$, 
$\mathcal{P}_1$, and $\mathcal{P}_2$ such that 
$\row(\mathcal{S}_{11}) \cap \row(\mathcal{C}_{12}) = \{0\}$, 
$\row(\mathcal{S}_{11}) \subset \row(\mathcal{K})$, and
$$\mathcal{K} = \mathcal{Q}\begin{bmatrix}
\mathcal{S}_{11} \\ 
\mathcal{P}_{1}\mathcal{S}_{11}+\mathcal{P}_{2}\mathcal{C}_{12}
\end{bmatrix},$$
where $\mathcal{S}_{11}$ has full row rank, say $l_1$. Consequently, system 
\eqref{sys2} reduces to
\begin{subequations}{\label{sys3}}
\begin{eqnarray}
\mathcal{E}_{1}\dot{x}(t) &= & \Phi x(t)+\mathcal{B}_{1}u(t)+\mathcal{D}_{11}y_{1}(t) +\mathcal{D}_{12}v_{2}(t)\nonumber\\
&& +\mathcal{D}_{\xi}\xi(t)+ \mathcal{F}_{1}g(H\mathcal{K}x,t),\label{sys3a}\\
y_{2}(t) &= & \mathcal{C}_{12}x(t), \label{sys3b}\\
z(t) &=& \mathcal{Q}\begin{bmatrix}
\mathcal{S}_{11} \\
\mathcal{P}_{1}\mathcal{S}_{11}+\mathcal{P}_{2}\mathcal{C}_{12}
\end{bmatrix}x(t) = \mathcal{Q} \begin{bmatrix}
z_1(t) \\ z_2(t)
\end{bmatrix},\label{sys3c}
\end{eqnarray}
\end{subequations}
where $z_1(t) = \mathcal{S}_{11} x(t)$ denotes the part of $z(t)$ that 
requires estimation, and $z_2(t) = \mathcal{P}_{1} z_1(t) + 
\mathcal{P}_2 y_{2}(t)$ denotes the part that is directly computable from 
$z_1(t)$ and the measured output $y_{2}(t)$. Therefore, it suffices to 
construct a functional filter only for $z_1(t)$, from which the entire 
functional vector $z(t)$ can be recovered via the relation in \eqref{sys3c}. 
This reduces the filter order from $p$ to $l_1$, yielding a 
reduced-order functional filter design.\\
\noindent\textbf{Case~2:}
$\rho \begin{bmatrix}\mathcal{C}_{12}^\top & \mathcal{K}^\top\end{bmatrix}^\top 
= \rho(\mathcal{C}_{12}) + \rho(\mathcal{K})$. Then row spaces of 
$\mathcal{C}_{12}$ and $\mathcal{K}$ are linearly independent, no component of 
$z(t)$ is directly recoverable from $y_2(t)$, and consequently 
$\mathcal{S}_{11} = \mathcal{K}$, $\mathcal{Q} = \mathbb{I}$, and 
$\mathcal{P}_1$, $\mathcal{P}_2$ are empty matrices in \eqref{sys3c}. This case corresponds to a functional filter with $l_1 = p$.
\begin{remark}\label{rem21}
It is worth noting that the state equation \eqref{sys3a} contains the term 
$\mathcal{D}_{11}y_1(t)$, where $y_1(t)$ is a measured output signal. Since $y_1(t)$ is a known measured quantity, 
it can be treated as an additional known input to the system. 
\end{remark}
    
Now, we introduce the following system, which serves as a functional filter for \eqref{sys3}.
\begin{subequations}\label{obs}
\begin{eqnarray}
\dot{w}(t)&= & \mathcal{N}w(t)+\mathcal{T}\mathcal{B}_{1}u(t)+ \mathcal{T}\mathcal{D}_{11}y_1(t)  \\ \notag
&&+ \mathcal{L}y_2(t)+ \mathcal{T}\mathcal{F}_{1}g(H\hat{z},t),  \label{obsa}\\
\hat{z}(t) &=& \mathcal{Q}(\mathcal{R}w(t) + \mathcal{M}y_{2}(t)), \label{obsb} 
 \end{eqnarray}
 \end{subequations}
 where $w(t) \in \mathbb{R}^{l_{1}}$, $\mathcal{R}=\begin{bmatrix}
	\mathbb{I}_{l_{1}} \\  \mathcal{P}_1 \end{bmatrix}$ , 
	$\mathcal{M}=\begin{bmatrix}
\bar{\mathcal{M}}\\ \mathcal{P}_1\bar{\mathcal{M}}+\mathcal{P}_2
\end{bmatrix}$, and $\bar{\mathcal{M}}$ is a matrix of appropriate dimension.
\begin{remark}\label{rem12}
In \eqref{obs}, the matrices $\mathcal{N}, ~\mathcal{T}, ~\mathcal{L}$, and $\mathcal{M}$ denote the design parameters of the filter. The objective is to determine these matrices such that \eqref{obs} serves as a functional $\mathscr{H}_\infty$ filter for system \eqref{sys3}. Notably, the functional vector $z(t)$ does not change while transforming \eqref{sys} into \eqref{sys3}. Hence, any functional filter for \eqref{sys3} is also one for the original system \eqref{sys}.
   
\end{remark}

  \begin{theorem}\label{thm1}
  Under Assumptions \ref{assum:1} and \ref{assum:2}, system \eqref{obs} is a functional $\mathscr{H}_\infty$ filter for system \eqref{sys}, if the following conditions are satisfied:
    \begin{enumerate}
    \item The unknown matrices $\mathcal{N}$, $\mathcal{T}$, $\mathcal{L}$, and $\mathcal{M}$ satisfy the following matrix equations
    \begin{subequations}\label{cond}
		\begin{eqnarray}
		\mathcal{T}\Phi-\mathcal{L}\mathcal{C}_{12}-\mathcal{N}\mathcal{T}\mathcal{E}_1 &=& 0, {\label{conda}}\\
		\mathcal{T}\mathcal{E}_1 + \mathcal{\bar{M}}\mathcal{C}_{12} - \mathcal{S}_{11} &=& 0, {\label{condb}}\\
            \mathcal{T}\mathcal{D}_{12}&=&0.{\label{condc}}
					\end{eqnarray}
				\end{subequations}
       The above matrix equations discard certain terms from the error dynamics, and consequently, the error dynamics can be represented by equation \eqref{et} below.
     \item The vectors $e_1(t) =  \mathcal{T} \mathcal{E}_1x(t)- w(t)$ and $e(t) = z(t)-\hat{z}(t)$ satisfy the system 
     \begin{subequations} \label{et}
         \begin{eqnarray} 
            \dot{e}_1(t) &=&  \mathcal{N}e_1(t) +  \mathcal{T} \mathcal{D}_{\xi}{\xi}(t) +  \mathcal{T}  \mathcal{F}_1 \Delta g, \label{eta}\\
            e(t) &=&  \mathcal{Q} \mathcal{R}e_1(t), \label{etb}
        \end{eqnarray}  
     \end{subequations}
      where  $\Delta g= g(Hz,t)-g(H\hat{z},t)$, such that:
      \begin{enumerate}
 \item[(2a)] when $\xi(t) \equiv 0$, the estimation error $e(t)$ asymptotically approaches to zero;\label{cond:2a}
\item[(2b)] when $\xi(t)\not\equiv 0$, there exists a scalar $\beta\ge 0$ and a prescribed disturbance attenuation level $\gamma>0$ such that the estimation error satisfies 
$\|e\|^2_{2} <  \gamma^2\|\xi\|^2_{2} + \beta$.\label{cond:2b}
 \end{enumerate}
  \end{enumerate}
  \end{theorem}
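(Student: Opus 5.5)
The argument is a direct verification. I will show that the conditions of Theorem~\ref{thm1} force the error dynamics \eqref{et}, and then observe that properties (2a)--(2b) are exactly the two requirements of Definition~\ref{def2}.

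\textbf{Step 1: existence of filter trajectories.} Take any solution $(x,u,v,\xi,\tilde y,z)$ of \eqref{sys} on $\mathtt{I}$. The signals $u$, $y_1$, $y_2$ are known continuous functions of $t$. Equation \eqref{obsa} is an explicit ODE in $w$, and its right-hand side is $\mathfrak{C}^1$ in $w$ because $g$ is $\mathfrak{C}^1$ and $\hat z$ is affine in $w$. Picard--Lindel\"of therefore gives a $\mathfrak{C}^1$ solution $w$ for an arbitrary initial value $w(t_0)$. Setting $\hat z=\mathcal{Q}(\mathcal{R}w+\mathcal{M}y_2)$ yields a continuous $\hat z$.

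One point needs justification: global existence on $\mathtt{I}$. I would obtain it a posteriori. Once \eqref{et} is established, $w=\mathcal{T}\mathcal{E}_1x-e_1$, and the bound on $e_1$ from the Lyapunov argument implicit in (2a)--(2b) excludes finite escape.

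\textbf{Step 2: computation of the error dynamics.} Differentiate $e_1=\mathcal{T}\mathcal{E}_1x-w$ and substitute \eqref{sys3a} multiplied by $\mathcal{T}$, together with \eqref{obsa}. The terms in $\mathcal{T}\mathcal{B}_1u$ and $\mathcal{T}\mathcal{D}_{11}y_1$ cancel, and $\mathcal{T}\mathcal{D}_{12}v_2=0$ by \eqref{condc}. Next replace $y_2=\mathcal{C}_{12}x$ and write $-\mathcal{N}w=\mathcal{N}e_1-\mathcal{N}\mathcal{T}\mathcal{E}_1x$. This gives
\[
\dot e_1=\mathcal{N}e_1+(\mathcal{T}\Phi-\mathcal{L}\mathcal{C}_{12}-\mathcal{N}\mathcal{T}\mathcal{E}_1)x+\mathcal{T}\mathcal{D}_\xi\xi+\mathcal{T}\mathcal{F}_1\bigl(g(H\mathcal{K}x,t)-g(H\hat z,t)\bigr).
\]
The coefficient of $x$ vanishes by \eqref{conda}, and $H\mathcal{K}x=Hz$. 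This is \eqref{eta}.

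For \eqref{etb}, use \eqref{sys3c} to write $z=\mathcal{Q}[\mathcal{S}_{11}x;\ \mathcal{P}_1\mathcal{S}_{11}x+\mathcal{P}_2\mathcal{C}_{12}x]$. By \eqref{condb}, $\mathcal{S}_{11}x=\mathcal{T}\mathcal{E}_1x+\bar{\mathcal{M}}\mathcal{C}_{12}x=e_1+w+\bar{\mathcal{M}}y_2$. Hence
\[
z=\mathcal{Q}\begin{bmatrix} e_1+w+\bar{\mathcal{M}}y_2\\ \mathcal{P}_1(e_1+w+\bar{\mathcal{M}}y_2)+\mathcal{P}_2y_2\end{bmatrix}=\mathcal{Q}\bigl(\mathcal{R}(e_1+w)+\mathcal{M}y_2\bigr),
\]
and subtracting \eqref{obsb} gives $e=\mathcal{Q}\mathcal{R}e_1$. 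Assumption~\ref{assum:2} enters only here: through Lemma~\ref{lem4} and Lemma~\ref{lem:blockrank} it ensures that \eqref{cond} is consistent, so that the hypotheses of the theorem are not vacuous. That solvability can be recorded as a remark rather than used in the proof.

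\textbf{Step 3: conclusion.} Steps 1--2 hold for every admissible $w$ and $\hat z$. Hypotheses (2a) and (2b) concern precisely this $e$, so they are items 1 and 2 of Definition~\ref{def2}. By Remark~\ref{rem12}, $z$ is unchanged by the transformation from \eqref{sys} to \eqref{sys3}, so \eqref{obs} is a functional $\mathscr{H}_\infty$ filter for \eqref{sys} itself.

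The only delicate point is the global existence of $w$, since $\Delta g$ is merely $\delta\mathrm{QC}$-bounded. I would handle it by noting that (2a)--(2b) presuppose $e_1$ is defined for all $t$, or by the bound on $\|e_1\|$ from the Lyapunov function that will certify (2a)--(2b) via LMIs later in the paper. Everything else is the routine algebra above.
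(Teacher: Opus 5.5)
Your derivation of the error dynamics is the same as the paper's. You differentiate $e_1=\mathcal{T}\mathcal{E}_1x-w$, cancel the $u$ and $y_1$ terms, remove the $x$- and $v_2$-terms with \eqref{conda} and \eqref{condc}, and use \eqref{condb} to get $e=\mathcal{Q}\mathcal{R}e_1$.

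Where you part ways is in what counts as the substance of the proof. You read condition 1) literally as a hypothesis. Then Assumption~\ref{assum:2} is logically idle and the implication is immediate, which is a valid proof of the statement as written.

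The paper instead spends most of its proof showing that Assumption~\ref{assum:2} is \emph{equivalent} to solvability of \eqref{cond}. First it reduces $\rho(\Psi)=\rho(\Omega)$ to $\rho(\Psi_1)=\rho(\Omega_1)$ by nonsingular block transformations and repeated use of Lemma~\ref{lem2}. Then it linearizes \eqref{conda}, whose term $\mathcal{N}\mathcal{T}\mathcal{E}_1$ is bilinear in the unknowns. It does this by substituting \eqref{condb} and setting $\mathcal{J}=\mathcal{N}\bar{\mathcal{M}}-\mathcal{L}$, so that \eqref{cond} becomes $[\mathcal{T}\ \bar{\mathcal{M}}\ \mathcal{J}\ \mathcal{N}]\,\Omega_1=\Theta$. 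Lemma~\ref{lem4} then gives both solvability and the parametrization \eqref{allsol}, on which Theorem~\ref{thm2} is built. If you keep your solvability remark, note two things. Lemma~\ref{lem4} does not apply to \eqref{cond} directly, and Lemma~\ref{lem:blockrank} is not the relevant tool. The $\mathcal{J}$-substitution is the idea you would need.

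Conversely, your Step~1 checks something the paper never does: Definition~\ref{def2} requires $w$ to exist on all of $\mathtt{I}$. Within Theorem~\ref{thm1} there is no Lyapunov function, so global existence can only be read into hypotheses (2a)--(2b). It is the LMI of Theorem~\ref{thm2} that yields $\mathscr{V}(t)\le\mathscr{V}(0)+\gamma^2\int_0^t\|\xi(\sigma)\|^2\,\mathrm{d}\sigma$. That bound rules out finite escape of $w=\mathcal{T}\mathcal{E}_1x-e_1$. Even then one also needs $H\hat z$ to stay in the domain $\mathscr{X}_1$ of $g$, which is automatic if $\mathscr{X}=\mathbb{R}^p$.
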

  \begin{proof}
      Let the transformation error $e_1(t) =  \mathcal{T} \mathcal{E}_1x(t)- w(t)$. Then, the dynamics of $e_1(t)$ are
      \begin{eqnarray*}{}
         \dot{e}_{1}(t) &=& \mathcal{T} \mathcal{E}_1\dot{x}(t)- \dot{w}(t) \notag \\
			&=& \mathcal{N}e_1(t) +\left(\mathcal{T}\Phi-\mathcal{L}\mathcal{C}_{12}-\mathcal{N}\mathcal{T}\mathcal{E}_1\right)x(t) \notag \\
	& &+\mathcal{T}\mathcal{D}_{12}v_{2}(t)+\mathcal{T} \mathcal{D}_{\xi}{\xi}(t)+ \mathcal{T}\mathcal{F}_{1} \times \notag \\
    && \big(g(Hz,t)-g(H\hat{z},t)\big) \notag \\
   &=& \mathcal{N}e_1(t) +\left(\mathcal{T}\Phi-\mathcal{L}\mathcal{C}_{12}-\mathcal{N}\mathcal{T}\mathcal{E}_1\right)x(t) \notag \\
	& & + \mathcal{T}\mathcal{D}_{12}v_{2}(t)+\mathcal{T} \mathcal{D}_{\xi}{\xi}(t)+ \mathcal{T}\mathcal{F}_{1}\Delta g.
\end{eqnarray*}
Applying conditions \eqref{conda} and \eqref{condc}, we obtain
\begin{eqnarray}\label{e1dot}
\dot{e}_1(t) = \mathcal{N}e_1(t) + \mathcal{T}\mathcal{D}_{\xi}\xi(t) 
+ \mathcal{T}\mathcal{F}_1\Delta g.
\end{eqnarray}
In addition, let $e(t)=z(t)-\hat{z}(t)$. Then
\begin{eqnarray*}  
e(t) &=& \mathcal{K}x(t)-\mathcal{Q}(\mathcal{R} w(t)+\mathcal{M}y_{2}(t)) \nonumber\\
&=& \mathcal{Q}\begin{bmatrix}
	    \mathcal{S}_{11} \\
        \mathcal{P}_{1}\mathcal{S}_{11}+\mathcal{P}_{2}\mathcal{C}_{12}
	\end{bmatrix}x(t)-\mathcal{Q}\big(\mathcal{R}w(t) \nonumber \\
    &&+\begin{bmatrix}
			\bar{\mathcal{M}}\\ \mathcal{P}_1\bar{\mathcal{M}}+\mathcal{P}_2
		\end{bmatrix}{y}_2(t)\big) \nonumber \\
&=& \mathcal{Q}\mathcal{R}\left(e_1(t) - (\mathcal{T}\mathcal{E}_{1} + \bar{\mathcal{M}}\mathcal{C}_{12} - \mathcal{S}_{11})x(t)\right).
	\end{eqnarray*}
    Applying condition \eqref{condb}, the above expression reduces to
\begin{eqnarray}\label{ett}
e(t) = \mathcal{Q}\mathcal{R}e_1(t).
\end{eqnarray}
    Clearly, if condition 1) holds, then \eqref{e1dot} and \eqref{ett} infer that the dynamics of the transformation error $e_1(t)$ are governed by \eqref{eta}, and the estimation error $e(t)$ is given by \eqref{etb}. Therefore, to complete the proof, it is sufficient to show that the rank condition \eqref{rank1} in Assumption \ref{assum:2} guarantees the solvability of system \eqref{cond} with respect to the unknown matrices.\\
    We break the rest of the proof into two steps.\\
    \textbf{Step 1}: Here, we establish that the rank condition \eqref{rank1} is equivalent to 
    \begin{eqnarray}\label{rank}
\rho (\Psi_1) = \rho (\Omega_1),
 \end{eqnarray}
 where \\
 $\Omega_1= \begin{bmatrix}
         \mathcal{E}_{1} & \Phi  & \mathcal{D}_{12} \\
          \mathcal{C}_{12} & 0 & 0 \\
          0 & \mathcal{C}_{12}  & 0\\
          0 & \mathcal-{S}_{11} & 0 
          \end{bmatrix}$ and  $\Psi_1 =\begin{bmatrix} \Omega_1 \\
          \begin{bmatrix}
              \mathcal{S}_{11} & 0 & 0
          \end{bmatrix}\end{bmatrix}$.

Since rank is preserved under pre- and post-multiplication by nonsingular 
matrices, we consider the nonsingular block-diagonal matrix 
$\mathcal{\tilde{P}} = \text{blk-diag}(\mathcal{P}, \mathcal{P}, 
\mathbb{I}_{2r+p})$ and use it to pre-multiply $\Omega$. This transforms 
the subblocks involving $\mathcal{E}$, $\mathcal{A}$, $\mathcal{D}$, $\mathcal{\bar{D}}_{\xi}$, and $\mathcal{F}$ via $\mathcal{P}$, yielding the partitioned form:
    \begin{eqnarray*} 
\rank(\Omega)= \rank(\mathcal{\Tilde{P}} \Omega)
= \begin{bmatrix}
\mathcal{E}_{1}  & \mathcal{A}_{1} & 0 & \mathcal{D}_{1} & 0 & \mathcal{D}_{\xi} & \mathcal{F}_{1} \\
0  & \mathcal{E}_{1} & \mathcal{A}_{1}  & 0 & \mathcal{D}_{1} &0 & 0\\
0  & \mathcal{C}_{1} & 0  & \mathcal{G}_{1} & 0 &0 & 0 \\
0  & 0 & \mathcal{C}_{1} & 0  & \mathcal{G}_{1} &0 & 0 \\
0  & 0  & -\mathcal{K} & 0 & 0 &0 & 0   
\end{bmatrix}.
\end{eqnarray*}
Note that the leading block
$\begin{bmatrix}
\mathcal{E}_{1}  & \mathcal{D}_{\xi} & \mathcal{F}_{1} \end{bmatrix}$ has full row rank $n_0$. Therefore, applying Lemma~\ref{lem2} yields
\begin{eqnarray}\label{non1}
\rank(\Omega)=n_0+ \rank(\Omega_0),
\end{eqnarray}
where $\Omega_0= \begin{bmatrix}
 \mathcal{E}_{1} & \mathcal{A}_{1}  & 0 & \mathcal{D}_{1}\\
 \mathcal{C}_{1} & 0  & \mathcal{G}_{1} & 0 \\
 0 & \mathcal{C}_{1} & 0  & \mathcal{G}_{1} \\
 0  & -\mathcal{K} & 0 & 0    
\end{bmatrix}$.\\

Next, we introduce the nonsingular matrices $\Tilde{U} = \text{blk-diag} ( \mathbb{I}_{n_0}, \mathcal{U}, \mathcal{U}, \mathbb{I}_p )$, $\Tilde{V} = \text{blk-diag} ( \mathbb{I}_{2n}, \mathcal{V}, \mathcal{V})$. Using the invariance of rank under nonsingular transformations, we get $
\rank(\Omega_0)= \rank(\Tilde{U} \Omega_0\Tilde{V})$ and then proceed the following operations:
\begin{enumerate}
\item Substitute the decomposition \eqref{SVD}, $ \mathcal{D}_{1}\mathcal{V}=\begin{bmatrix}
      \mathcal{D}_{11} & \mathcal{D}_{12}
  \end{bmatrix}$, $\mathcal{U}\mathcal{C}_{1}=\begin{bmatrix}
     \mathcal{C}_{11} \\ \mathcal{C}_{12}
  \end{bmatrix}$ into $\Tilde{U}\Omega_0\Tilde{V}$;

\item Pre-multiply matrix $\Tilde{U} \Omega_0\Tilde{V}$ with

\begin{center}
    $\begin{bmatrix}
\mathbb{I}_{n_0}  &  & -D_{11} &   \\
 & \mathbb{I}_{r_1} & &   \\
&  & \mathbb{I}_{q_1} &  \\
&  & &   \mathbb{I}_{r_1 - q_1 + p}
\end{bmatrix}$
\end{center}
which is an elementary block row operation that eliminates the 
$\mathcal{D}_{11}$ block from the first block row. After this elimination, the identity block $\mathbb{I}_{q_1}$ appears twice as a leading full rank subblock;
 \item Apply Lemma \ref{lem2} twice for the full rank matrices $\mathbb{I}_{q_1}$ and obtain
\begin{eqnarray*}
\rank(\Omega_0) = \rank(\Tilde{U}\Omega_0\Tilde{V})\notag = 2q_1+\rho \begin{bmatrix}
          \mathcal{E}_{1} & \Phi  & \mathcal{D}_{12} \\
          \mathcal{C}_{12} & 0 & 0 \\
          0 & \mathcal{C}_{12}  & 0\\
          0 & -\mathcal{K} & 0 
          \end{bmatrix};
     \end{eqnarray*}
 \item Substitute $\mathcal{K} = \mathcal{Q}\begin{bmatrix}
	    \mathcal{S}_{11} \\
        \mathcal{P}_{1}\mathcal{S}_{11}+\mathcal{P}_{2}\mathcal{C}_{12}
	\end{bmatrix}$ and using the fact that $\mathcal{Q}$ is a permutation matrix, we have
    \begin{eqnarray*}
\rank(\Omega_0) = 2q_1+\rho \begin{bmatrix}
           \mathcal{E}_{1} & \Phi  & \mathcal{D}_{12} \\
          \mathcal{C}_{12} & 0 & 0 \\
          0 & \mathcal{C}_{12}  & 0\\
          0 & -\mathcal{S}_{11}  & 0\\
          0 & -\mathcal{P}_{1}\mathcal{S}_{11}-\mathcal{P}_{2}\mathcal{C}_{12} & 0
          \end{bmatrix}
     \end{eqnarray*}
     where $\Phi = \mathcal{A}_1 - \mathcal{D}_{11}\mathcal{C}_{11}$ as 
defined in \eqref{sys2};
     \item Apply the elementary row operations corresponding to pre-multiplying the rightmost matrix in the expression above by
     \begin{center}
         $\begin{bmatrix}
            \mathbb{I} & & & & \\
            & \mathbb{I} & & & \\
            & & \mathbb{I} & & \\
            & & & \mathbb{I} & \\
            & & \mathcal{P}_2 &-\mathcal{P}_1 & \mathbb{I}
        \end{bmatrix}$.
     \end{center}
     
    \end{enumerate}
 Therefore, \eqref{non1} becomes
 \begin{eqnarray}\label{r1}
\rank(\Omega) = n_0+ 2q_1+\rank(\Omega_1).
 \end{eqnarray}
By a similar calculation, we obtain 
\begin{eqnarray} \label{r2}
\rank(\Psi) = n_0+ 2q_1 + \rank (\Psi_1).
\end{eqnarray}
Hence, \eqref{r1} and \eqref{r2} imply that the rank condition \eqref{rank1} is equivalent to the rank condition \eqref{rank}. \\
\textbf{Step 2}: In this step, we show \eqref{cond} is solvable if and only if \eqref{rank} holds.\\
Noting that \eqref{conda} is nonlinear in the unknown matrices, we reduce it to a linear one by substituting 
$\mathcal T\mathcal E_{1}=\mathcal S_{11}-\bar{\mathcal M}\mathcal C_{11}$ from \eqref{condb} into \eqref{conda}. 
Consequently, \eqref{conda} reduces to
\begin{eqnarray}\label{ta}
\mathcal{T} \Phi+\mathcal{J}\mathcal{C}_{12}-\mathcal{N} \mathcal{S}_{11} =  0,
\end{eqnarray}	
where $\mathcal{J}  = \mathcal{N}\bar{\mathcal{M}}-\mathcal{L}$. 
Clearly, \eqref{condb}, \eqref{condc} and \eqref{ta} can be expressed in the matrix form
\begin{eqnarray}\label{psitheta}
\begin{bmatrix}
   \mathcal{T} & \bar{\mathcal{M}} & \mathcal{J} & \mathcal{N}
\end{bmatrix}\Omega_1  =  \Theta, 
\end{eqnarray} 
where $ \Theta=
\begin{bmatrix}
\mathcal{S}_{11} & 0& 0
\end{bmatrix}$ and $\Omega_1$ is same as in \eqref{rank}. It follows from Lemma~\ref{lem4} that the solvability of system \eqref{psitheta} is equivalent to the rank condition \eqref{rank}. Furthermore, the corresponding general solution of \eqref{psitheta} is
\begin{eqnarray}\label{gensol}
\begin{bmatrix}
 \mathcal{T} & \mathcal{\bar{M}} & \mathcal{J} & \mathcal{N}
\end{bmatrix}  =  \mathcal{L}_1-\mathcal{Z}\mathcal{L}_2,
\end{eqnarray}
where $\mathcal{L}_1 = \Theta \Omega_{1}^+$, $\mathcal{L}_2 = (\mathbb{I}-\Omega_{1} \Omega_{1}^+)$, and $\mathcal{Z}$ represents an arbitrary matrix of suitable size. Therefore, by taking
\begin{align*}
\mathcal T_1 &= \mathcal L_1\begin{bmatrix} \mathbb{I} & 0 & 0 & 0 \end{bmatrix}^{\!\top}, \qquad
\mathcal T_2 = \mathcal L_2\begin{bmatrix} \mathbb{I} & 0 & 0 & 0 \end{bmatrix}^{\!\top},\\
\bar{\mathcal M}_1 &= \mathcal L_1\begin{bmatrix} 0 & \mathbb{I} & 0 & 0 \end{bmatrix}^{\!\top}, \qquad
\bar{\mathcal M}_2 = \mathcal L_2\begin{bmatrix} 0 & \mathbb{I} & 0 & 0 \end{bmatrix}^{\!\top},\\
\mathcal J_1 &= \mathcal L_1\begin{bmatrix} 0 & 0 & \mathbb{I} & 0 \end{bmatrix}^{\!\top}, \qquad
\mathcal J_2 = \mathcal L_2\begin{bmatrix} 0 & 0 & \mathbb{I} & 0 \end{bmatrix}^{\!\top},\\
\mathcal N_1 &= \mathcal L_1\begin{bmatrix} 0 & 0 & 0 & \mathbb{I} \end{bmatrix}^{\!\top},\; \text{and}\;
\mathcal N_2 = \mathcal L_2\begin{bmatrix} 0 & 0 & 0 & \mathbb{I} \end{bmatrix}^{\!\top}.
\end{align*}
We obtain
\begin{subequations} \label{allsol}
\begin{eqnarray} 
\mathcal{T} &=& \mathcal{T}_1 - \mathcal{Z}\mathcal{T}_2, ~ \bar{\mathcal{M}} = \bar{\mathcal{M}}_1 - \mathcal{Z}\bar{\mathcal{M}}_2,  \label{allsola} \\
\mathcal{J} &=& \mathcal{J}_1 - \mathcal{Z}\mathcal{J}_2, ~ \mathcal{N} = \mathcal{N}_1 - \mathcal{Z}\mathcal{N}_2. \label{allsolb}
\end{eqnarray}
\end{subequations}
Thus, under the fulfillment of the rank condition \eqref{rank}, the error system evolves according to \eqref{et}. In addition, if condition 2) is satisfied, then system \eqref{obs} is a functional $\mathscr{H}_\infty$ filter for \eqref{sys3}, cf. Definition \ref{def2}. This completes the proof.
  \end{proof}
  \begin{remark} Static filter (zero‑order filter): \label{rem13}
If $\rho \begin{bmatrix}
	    \mathcal{C}_{12}^\top & \mathcal{K}^\top	
        \end{bmatrix}^\top =  \rho (\mathcal{C}_{12})$,
then Lemma~\ref{lem4} infers that there exists a constant matrix $\mathcal{P}_{2}$ such that
$\mathcal{K} = \mathcal{P}_{2}\mathcal{C}_{12}$. Accordingly, a zero-order filter for $z(t)$ can be constructed as
$$
\hat z(t) = \mathcal{P}_{2} y_2(t),
 ~~\text{where}~~
 y_2(t) = \mathcal{C}_{12}x(t).
$$
Thus, the resulting estimation error is given by
$$
e(t)
= z(t)-\hat z(t) 
= (\mathcal{K}- \mathcal{P}_{2}\mathcal{C}_{12})x(t) 
= 0, \quad \forall t \geq 0.
$$
Hence, the functional output $z(t)$ can be recovered precisely and instantaneously from the measured output $y_2(t)$. In other words, the rank condition $\rho \begin{bmatrix}
	    \mathcal{C}_{12}^\top & \mathcal{K}^\top	
        \end{bmatrix}^\top =  \rho (\mathcal{C}_{12})$ guarantees that $y_2(t)$ contains all the information needed to reconstruct $z(t)$, so no dynamic state is required.
\end{remark}

Now, the design of the functional $\mathscr{H}_{\infty}$ filter \eqref{obs}, with the parameter matrices given in \eqref{allsol}, reduces to determining the free matrix parameter $\mathcal Z$ such that condition~2) of Theorem~\ref{thm1} is satisfied. To this end, the following result employs Lyapunov stability theory to derive an LMI-based condition for computing $\mathcal{Z}$.\\
In the following theorem, we show that under Assumption \ref{assum:1} and the rank condition \eqref{rank}, if a certain LMI is satisfied, then there exists a $\mathcal H_\infty$ filter of the form \eqref{obs} for system \eqref{sys}.

    \begin{theorem}\label{thm2}
    Consider the nonlinear system \eqref{sys} that satisfies the Assumption~\ref{assum:2}, and the nonlinear function $g$ satisfies Assumption \ref{assum:1}. Then, for a given $\gamma>0$, system \eqref{obs} is a functional $\mathcal{H_{\infty}}$ filter with parameter matrices \eqref{allsol} and error dynamics \eqref{et}, if there exists a scalar $\mu>0$, matrices $\Upsilon=\Upsilon^\top>0$, and $\Upsilon_{1}$ such that the following LMI holds

    \begin{eqnarray}{\label{lmi}}
\Xi = \begin{bmatrix}
{\Xi}_{11}+\bar{R}^\top\bar{R} +\mu \mathbb{I}& {\Xi}_{12}  &  {\Xi}_{13} \\
{\Xi}_{12}^\top & \kappa\mathcal{W}_{22} & 0\\
{\Xi}_{13}^\top & 0 & -\gamma^2\mathbb{I}

     \end{bmatrix} &\leq& 0,
				\end{eqnarray}
				where 
     \begin{subequations}\label{cof}
\begin{eqnarray}
{\Xi}_{11} &=& \mathcal{N}_{1}^\top \Upsilon+\Upsilon \mathcal{N}_{1}-\mathcal{N}_{2}^\top \Upsilon_{1}^\top-\Upsilon_{1}\mathcal{N}_{2} \notag \\
&&+\bar{H}^\top\kappa\mathcal{W}_{11}\bar{H}, {\label{cof1}}\\
{\Xi}_{12} &=& \Upsilon (\mathcal{T}_1 - \mathcal{Z}\mathcal{T}_2)\mathcal{F}_1+\bar{H}^\top\kappa\mathcal{W}_{12},{\label{cof2}}\\
{\Xi}_{13} &=&  \Upsilon(\mathcal{T}_1 - \mathcal{Z}\mathcal{T}_2)\mathcal{D}_{\xi}, \label{cof4} \\
 \bar{H} &=& H\mathcal{Q}\mathcal{R},\; \bar{R}=\mathcal{Q}\mathcal{R}, \; \text{and} \; {\Upsilon_{1}}=\Upsilon \mathcal{Z}. {\label{cof5}}
				\end{eqnarray}
     \end{subequations}
    \end{theorem}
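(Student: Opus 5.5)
By Theorem~\ref{thm1}, Assumption~\ref{assum:2} makes the matrix equations \eqref{cond} solvable. The parameters \eqref{allsol} give the error dynamics \eqref{et} with $\mathcal{N}=\mathcal{N}_1-\mathcal{Z}\mathcal{N}_2$ and $\mathcal{T}=\mathcal{T}_1-\mathcal{Z}\mathcal{T}_2$. So the only task is to verify conditions (2a) and (2b) of Theorem~\ref{thm1} from the LMI \eqref{lmi}. I would use the quadratic Lyapunov function $V(e_1)=e_1^\top\Upsilon e_1$ and a dissipation inequality that bundles the $\delta\mathrm{QC}$ term, the $\mathcal{L}_2$ performance term and a strict margin $\mu$.

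\textbf{Step 1: derivative of $V$.} Along \eqref{eta},
\[
\dot V = e_1^\top(\mathcal{N}^\top\Upsilon+\Upsilon\mathcal{N})e_1 + 2e_1^\top\Upsilon\mathcal{T}\mathcal{F}_1\Delta g + 2e_1^\top\Upsilon\mathcal{T}\mathcal{D}_\xi\xi .
\]
With $\Upsilon_1=\Upsilon\mathcal{Z}$ one has $\Upsilon\mathcal{N}=\Upsilon\mathcal{N}_1-\Upsilon_1\mathcal{N}_2$. This makes the quadratic part linear in $(\Upsilon,\Upsilon_1)$ and identifies it with the first four terms of $\Xi_{11}$ in \eqref{cof1}.

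\textbf{Step 2: S-procedure for the nonlinearity.} By \eqref{etb}, $Hz-H\hat z=H\mathcal{Q}\mathcal{R}e_1=\bar H e_1$. Applying Assumption~\ref{assum:1} with $\zeta_1=Hz$, $\zeta_2=H\hat z$, and using Remark~\ref{rem11} to scale by $\kappa>0$, gives
\[
\begin{bmatrix}\bar He_1\\ \Delta g\end{bmatrix}^\top\kappa\mathcal{W}\begin{bmatrix}\bar He_1\\ \Delta g\end{bmatrix}\ge 0 .
\]
I add this nonnegative quantity to $\dot V$, and also add $e^\top e-\gamma^2\xi^\top\xi=e_1^\top\bar R^\top\bar R e_1-\gamma^2\xi^\top\xi$. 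With $\eta=[e_1^\top\ \Delta g^\top\ \xi^\top]^\top$ this yields
\[
\dot V + e^\top e - \gamma^2\xi^\top\xi + \mu\|e_1\|^2 \le \eta^\top\Xi\eta \le 0 ,
\]
where the blocks of $\Xi$ match \eqref{cof1}--\eqref{cof4}. The main care point is the bookkeeping: the cross term $2e_1^\top\bar H^\top\kappa\mathcal{W}_{12}\Delta g$ enters $\Xi_{12}$, and $\kappa\mathcal{W}_{22}$ is the $(2,2)$ block. The $\Xi_{12}$ and $\Xi_{13}$ entries still contain $\Upsilon\mathcal{Z}=\Upsilon_1$, so they remain linear in the decision variables.

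\textbf{Step 3: conclusions.} For (2a), set $\xi\equiv0$. Then $\dot V\le-\mu\|e_1\|^2$, so $e_1\to0$ exponentially, since $V$ is positive definite and quadratic. Hence $e=\mathcal{Q}\mathcal{R}e_1\to0$. Because the filter is an explicit ODE, this holds for any initial condition $w(0)$. For (2b), integrate over $[0,t_f]$ and use $V\ge0$:
\[
\|e\|_2^2 \le \gamma^2\|\xi\|_2^2 + V(e_1(0)) - \mu\|e_1\|_2^2 .
\]
Take $\beta=V(e_1(0))=e_1(0)^\top\Upsilon e_1(0)\ge0$. Strict inequality follows from the $-\mu\|e_1\|_2^2$ term whenever $e_1\not\equiv0$ on the interval. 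In the degenerate case $e_1\equiv0$ we have $e\equiv0$, and strictness follows from $\xi\not\equiv0$.

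Finally, the filter gains are recovered as $\mathcal{Z}=\Upsilon^{-1}\Upsilon_1$, and then \eqref{allsol} gives $\mathcal{L}=\mathcal{N}\bar{\mathcal{M}}-\mathcal{J}$.

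I expect the main obstacle to be the S-procedure step: correctly aligning the $\delta\mathrm{QC}$ quadratic form with the error variables through $\bar H$, and absorbing the unknown scaling $\kappa$ consistently.
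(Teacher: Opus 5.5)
Your proposal is correct and follows the paper's proof essentially step for step. It uses the same Lyapunov function $e_1^\top\Upsilon e_1$ and adds the $\kappa$-scaled $\delta\mathrm{QC}$ through $\bar H e_1 = Hz-H\hat z$ as an S-procedure term. It then adds $e^\top e-\gamma^2\xi^\top\xi$ and the margin $\mu\|e_1\|^2$ to reach $\eta^\top\Xi\eta\le 0$, and integrates with $\beta=V(e_1(0))$. Your direct use of $\dot V\le-\mu\|e_1\|^2$ when $\xi\equiv0$, which gives exponential decay, and your explicit treatment of the strict inequality are slightly tighter than the paper's principal-submatrix argument and its unqualified claim $-\mu e_1^\top e_1<0$.
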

\begin{proof}
    Let LMI \eqref{lmi} be true. Now, we split the proof into the following steps.\\
    \noindent \textbf{Step 1:} Here, we show that, when $\xi(t) \not\equiv 0$, the error dynamics \eqref{et} satisfy the condition (2b) of Theorem \ref{thm1}.\\
               Consider the Lyapunov function as 
\begin{eqnarray}\label{vv}
\mathscr{V}(t) = e_1^\top(t) \Upsilon e_1(t),
    \end{eqnarray}
   where $\Upsilon=\Upsilon^\top>0$. Therefore, 
\begin{eqnarray}\label{vdot}
    \dot{\mathscr{V}}(t) &= & \dot{e}_1^\top(t) \Upsilon e_1(t)+e_1^\top(t) \Upsilon \dot{e}_1(t) \notag\\
 &=& (\mathcal{N}e_1(t) +  \mathcal{T} \mathcal{D}_{\xi}{\xi}(t) +  \mathcal{T}  \mathcal{F}_1 \Delta g)^\top \Upsilon e_1(t) \notag\\
   &&+ e_1^\top(t) \Upsilon (\mathcal{N}e_1(t) +  \mathcal{T} \mathcal{D}_{\xi}{\xi}(t) +  \mathcal{T}  \mathcal{F}_1 \Delta g)        \notag\\
   &=& e_1^\top(t)( \Upsilon\mathcal{N}+\mathcal{N}^\top \Upsilon)e_1(t) \notag \\
   & &+ \xi^\top(t)( \Upsilon\mathcal{T}\mathcal{D}_{\xi})^\top e_1(t) +e_1^\top(t)(\Upsilon\mathcal{T}\mathcal{D}_{\xi})\xi(t) \notag \\
   &&+ \Delta g^\top(\Upsilon\mathcal{T}  \mathcal{F}_1)^\top e_1(t)+e_1^\top(t)(\Upsilon\mathcal{T}  \mathcal{F}_1)\Delta g \notag \\
   &= & \eta^\top(t)
   \begin{bmatrix}
    \mathcal{N}^\top \Upsilon+ \Upsilon\mathcal{N}  &  \Upsilon\mathcal{T}  \mathcal{F}_1 & \Upsilon\mathcal{T}\mathcal{D}_{\xi}\\
(\Upsilon\mathcal{T}  \mathcal{F}_1)^\top & 0 &  0\\
	(\Upsilon\mathcal{T}\mathcal{D}_{\xi})^\top & 0 &  0 \\
	\end{bmatrix}
  \eta(t), \nonumber\\
    \end{eqnarray} where $\eta(t)= \begin{bmatrix}
		  e_1(t) \\
     \Delta g(t)\\
         \xi(t)   
	\end{bmatrix}$. 
   Since the nonlinear function $g$ satisfy the $\delta \mathrm{QC}$ condition \eqref{ineq}, \emph{i.e.}, inequality \eqref{ineq} holds for the for $\delta \mathrm{MM}$ \eqref{imm}, Remark \ref{rem11} allows us to write 
  $$ \begin{bmatrix}
    \zeta_1- \zeta_2 \\
g( \zeta_1, t)-g( \zeta_{2}, t)
\end{bmatrix}^\top \kappa \mathcal{W}\begin{bmatrix}
 \zeta_1- \zeta_2 \\
g( \zeta_1, t)-g( \zeta_{2}, t)
\end{bmatrix} \geq 0 .$$
Taking $\zeta_1=Hz$ and $\zeta_2=H\hat{z}$, we get
\begin{eqnarray*}
   \begin{bmatrix}
    He(t)\\
\Delta g(t)
\end{bmatrix}^\top \begin{bmatrix}
\kappa \mathcal{W}_{11} & \kappa \mathcal{W}_{12} \\
\kappa\mathcal{W}_{12}^\top & \kappa\mathcal{W}_{22}
 \end{bmatrix} \begin{bmatrix}
He(t) \\
\Delta g(t)
\end{bmatrix} \geq 0.
\end{eqnarray*}
   Using \eqref{etb}, inequality \eqref{ineq} can be rewritten as
   \begin{eqnarray*}\label{ineqnew}
  \begin{bmatrix}
    H\mathcal{Q}\mathcal{R}e_1(t)\\
\Delta g(t)
\end{bmatrix}^\top \begin{bmatrix}
\kappa\mathcal{W}_{11} & \kappa \mathcal{W}_{12} \\
\kappa\mathcal{W}_{12}^\top & \kappa\mathcal{W}_{22}
 \end{bmatrix} \begin{bmatrix}
    H\mathcal{Q}\mathcal{R}e_1(t)\\
\Delta g(t)
\end{bmatrix} \geq 0,
\end{eqnarray*}
 which is equivalent to
\begin{eqnarray}\label{ineq2}
\eta^\top(t)
    \begin{bmatrix}
\bar{H}^\top\kappa\mathcal{W}_{11}\bar{H} & \bar{H}^\top\kappa\mathcal{W}_{12}&0 \\
\kappa\mathcal{W}_{12}^\top\bar{H} & \kappa\mathcal{W}_{22}& 0\\
0 & 0& 0
 \end{bmatrix} 
\eta(t) \geq 0,
\end{eqnarray}
where $\bar{H}=H\mathcal{Q}\mathcal{R}$. \\
Combining \eqref{vdot} and \eqref{ineq2}, and using \eqref{allsol}, we obtain 
\begin{eqnarray}\label{vlast}
\dot{\mathscr{V}}(t)  \leq \eta^\top(t)  \begin{bmatrix}
{\Xi}_{11} & {\Xi}_{12}  &  {\Xi}_{13} \\
{\Xi}_{12}^\top & \kappa\mathcal{W}_{22} & 0\\
{\Xi}_{13}^\top & 0 & 0
\end{bmatrix}\eta(t).
\end{eqnarray}
Now, since $e(t) = \mathcal{Q}\mathcal{R}e_1(t)$ from \eqref{etb}, we have
\begin{eqnarray*}
e^\top(t)e(t) &=& e_1^\top(t)\mathcal{R}^\top\mathcal{Q}^\top
\mathcal{Q}\mathcal{R}e_1(t) \\
&=& e_1^\top(t)\bar{R}^\top\bar{R}e_1(t),
\end{eqnarray*}
where $\bar{R} = \mathcal{Q}\mathcal{R}$. Adding 
$e^\top(t)e(t) - \gamma^2\xi^\top(t)\xi(t)$ to both sides 
of \eqref{vlast} and substituting the above expression, 
we obtain
\begin{eqnarray*}
&&\dot{\mathscr{V}}(t) - \gamma^2\xi^\top(t)\xi(t) 
+ e^\top(t)e(t) \\
&\leq& \eta^\top(t)\begin{bmatrix}
{\Xi}_{11} & {\Xi}_{12}  &  {\Xi}_{13} \\
{\Xi}_{12}^\top & \kappa\mathcal{W}_{22} & 0\\
{\Xi}_{13}^\top & 0 & 0
\end{bmatrix}\eta(t) \\
&&+ e_1^\top(t)\bar{R}^\top\bar{R}e_1(t) 
- \gamma^2\xi^\top(t)\xi(t)\\
&=& \eta^\top(t)\begin{bmatrix}
{\Xi}_{11}+\bar{R}^\top\bar{R} & {\Xi}_{12} & {\Xi}_{13} \\
{\Xi}_{12}^\top & \kappa\mathcal{W}_{22} & 0\\
{\Xi}_{13}^\top & 0 & -\gamma^2\mathbb{I} 
\end{bmatrix}\eta(t)\\
&=& \eta^\top(t)\Xi\eta(t) - \mu e_1^\top(t)e_1(t),
\end{eqnarray*}
where $\Xi$ is same as in \eqref{lmi}. Thus, we can write 
\begin{eqnarray*}
    \dot{\mathscr{V}}(t)-\gamma^2\xi^\top(t)\xi(t)+e^\top(t)e(t)\leq - \mu e_1^\top(t)e_1(t)<0. 
\end{eqnarray*}
Consequently, 
\begin{eqnarray*}
    \dot{\mathscr{V}}(t) < \gamma^2\xi^\top(t)\xi(t)-e^\top(t) e(t).
\end{eqnarray*}
\noindent Upon integration of the above inequality from $0$ to $t_f$, it follows that
\begin{eqnarray*}
\int_0^{t_f} \dot{\mathscr{V}}(\sigma) \mathrm{d} \sigma  < \int_0^{t_f} \gamma^2 \xi^\top(\sigma) \xi(\sigma) \mathrm{d} \sigma-\int_0^{t_f} e^\top(\sigma) e(\sigma) \mathrm{d} \sigma,
\end{eqnarray*}
which is equivalent to 
    \begin{eqnarray*}
\mathscr{V}(t_f)-\mathscr{V}(0) < \int_0^{t_f} \gamma^2 \xi^\top(\sigma) \xi(\sigma) \mathrm{d} \sigma-\int_0^{t_f} e^\top({\sigma}) e(\sigma) \mathrm{d} \sigma.
    \end{eqnarray*}
 Since $\mathscr{V}(t_f) \geq 0$, one obtains
 \begin{eqnarray}\label{v1}
    \int_0^{t_f} e^\top(\sigma) e(\sigma) \mathrm{d} \sigma < \gamma^2   \int_0^{t_f} \xi^\top(\sigma) \xi(\sigma) \mathrm{d} \sigma + \beta.
 \end{eqnarray}
 Finally, setting $\beta = \mathscr{V}(0)$ in \eqref{v1} yields $ \|e\|^2_{2} <  \gamma^2\|\xi\|^2_{2} + \beta$. This establishes that condition (2b) of Theorem~\ref{thm1} is fulfilled whenever \eqref{lmi} is satisfied.\\
 \noindent \textbf{Step 2:} This step demonstrates that, when $\xi(t)\equiv 0$, the error dynamics \eqref{et} fulfill the condition (2a) of Theorem \ref{thm1}. In fact, setting $\xi(t)\equiv0$ in \eqref{et} reveals that $e(t) = \mathcal{Q} \mathcal{R}e_1(t)$ and 
          \begin{eqnarray}\label{net}
            \dot{e}_1(t) &=&  \mathcal{N}e_1(t) +   \mathcal{T}  \mathcal{F}_1 \Delta g, 
        \end{eqnarray} 
        and Eq. \eqref{vlast} reduces to
        \begin{eqnarray}\label{vlast1}
\dot{\mathscr{V}}(t)  \leq \begin{bmatrix}
		  e_1(t) \\
     \Delta g(t)   
	\end{bmatrix}^\top  \begin{bmatrix}
{\Xi}_{11} & {\Xi}_{12}  \\
{\Xi}_{12}^\top & \kappa\mathcal{W}_{22} 
\end{bmatrix}\begin{bmatrix}
		  e_1(t) \\
     \Delta g(t)  
	\end{bmatrix}.
\end{eqnarray}
        Thus, $\dot{\mathscr{V}}<0$ if
        \begin{eqnarray}\label{lmi2}
             \Xi_1 =\begin{bmatrix}
{\Xi}_{11}+\bar{R}^\top\bar{R} + \mu \mathbb{I}& {\Xi}_{12}  \\
{\Xi}_{12}^\top & \kappa\mathcal{W}_{22} 
\end{bmatrix} \leq 0,
        \end{eqnarray}
       which is a principal sub-matrix of $\Xi$. Therefore, if \eqref{lmi} holds, the error dynamics \eqref{net} are asymptotically stable, and consequently $e(t)$ approaches $0$ as $t$ approaches $\infty$. This concludes the proof.
\end{proof}
  
\begin{remark}
The matrices $\mathcal{Q}$, $\mathcal{R}$, and consequently 
$\bar{R} = \mathcal{Q}\mathcal{R}$ are pre-computed constant 
matrices obtained from the known system matrices 
$\mathcal{K}$ and $\mathcal{C}_{12}$ via the two-case 
analysis described below system \eqref{sys2}. Hence, 
$\bar{R}^\top\bar{R}$ is a known constant matrix in 
\eqref{lmi} and does not introduce any bilinearity into the 
formulation. Therefore, the condition \eqref{lmi} in 
Theorem~\ref{thm2} remains linear with respect to the 
decision variables.
\end{remark}

\begin{remark}\label{rem14}
From Assumption \ref{assum:1}, positive semi-definite (PSD) multipliers 
$\mathcal{W}$ satisfy the quadratic constraint \eqref{ineq}. However, such 
multipliers become inadmissible under the proposed filter design framework once 
the LMI feasibility condition \eqref{lmi} is imposed. Indeed, if $\mathcal{W}\geq 0$, then for any $\kappa>0$ the scaled matrix $\kappa\mathcal{W}\geq 0$, so every principal submatrix of $\kappa\mathcal{W}$ is PSD; in particular, $\kappa\mathcal{W}_{22}\geq 0$. On the other hand, feasibility of \eqref{lmi} requires $\Xi\leq 0$, so every principal submatrix of $\Xi$ must be negative semi-definite. Since the $(2,2)$ block of $\Xi$ is $\kappa\mathcal{W}_{22}$, this forces $\kappa\mathcal{W}_{22}\leq 0$, contradicting $\kappa\mathcal{W}_{22}\geq 0$ unless $\mathcal{W}_{22}=0$. Hence PSD multipliers cannot ensure feasibility of the proposed LMIs. Similarly, negative semi-definite multipliers violate the $\delta\mathrm{QC}$ \eqref{ineq} for any nonlinearity. Therefore, admissible multiplier matrices in the proposed $\delta\mathrm{QC}$ framework must necessarily be indefinite, with the $(2,2)$ block satisfying $\mathcal{W}_{22}\leq 0$. This block structural condition 
$\mathcal{W}_{22}\leq0$ arises naturally from the LMI feasibility requirement 
$\Xi\leq0$ and is consistent with the commonly used $\delta$MM structures listed 
in Table~\ref{table:IMM}. Furthermore, the scaling parameter $\kappa>0$ acts as an additional tuning parameter that may influence the numerical feasibility of the resulting LMIs without altering the validity of the quadratic constraint. A suitable value of $\kappa$ can be obtained through a simple trial procedure over a positive range of $\kappa$, where the LMI feasibility problem is solved for different values of $\kappa$ using standard MATLAB solvers such as \texttt{feasp} or \texttt{mincx}.
\end{remark}

\begin{remark}
The computational complexity of each filter is assessed in 
terms of floating-point operations (flops), where a $p\times q$ 
matrix addition requires $pq$ flops and a $p\times q$ by 
$q\times r$ matrix multiplication requires $pr(2q-1)$ flops. 
Since the filters under comparison are developed for system structures analogous to \eqref{sys}, the computational complexity analysis is restricted to the filter update equations, ensuring a fair and consistent comparison with the existing approaches \cite{abbaszadeh2012generalized,darouach2011,meng2024observer, sharma2026functional}. 
The resulting complexity expressions for the proposed 
functional filter \eqref{est} alongside the 
reference filters are presented in Table~\ref{table:complexity}.
\end{remark}
\begin{table*}[ht]
\caption{Computational complexity comparison of different filters.}
\label{table:complexity}
\tablefont
\centering
\begin{tabular*}{\textwidth}{@{\extracolsep{\fill}}p{150pt}p{320pt}<{\raggedright}@{}}
\hline\\[-1.5pc]\hline
\centerline{\textbf{Filters}} &
\centerline{\textbf{Total Flops based on filter equations}} \\
\centerline{Full-order Filter \cite{abbaszadeh2012generalized}$^{\ast}$} &
\centerline{$2n^2 + 4mn + 2nr + 2mr + 2ml - 2m - n$} \\
\centerline{Reduced-order Filter \cite{darouach2011}} &
\centerline{$4n^2 + 4kn + 2ln - 2nr - 2kr - 2lr - 2n + r$} \\
\centerline{Reduced-order Filter \cite{meng2024observer}} &
\centerline{$8n^2 + 2kn + 2ln - 4nn_0 - 4nr - 2kr - 2lr + 2n_0r - 2n + r$} \\
\centerline{Functional Filter \cite{sharma2026functional}} &
\centerline{$2p^2+4pk+4pr+2pl-2p$} \\
\centerline{Proposed Functional Filter} &
\centerline{$2l_1^2 + 2l_1k + 2l_1r + 2l_1l + 2pl_1 + 2pk + 2pr - l_1 - p$} \\
\hline\\[-1.5pc]\hline
\multicolumn{2}{l}{$^{\ast}$The output nonlinearity term in \cite{abbaszadeh2012generalized} is set to zero for consistency with \eqref{sys}.}
\end{tabular*}
\end{table*}

   Algorithm~\ref{alg} describes the design procedure of the proposed filter, which is carried out using the MATLAB LMI Toolbox.
\begin{algorithm}
\caption{Algorithmic steps for the design of the filter \eqref{obs} corresponding to the system \eqref{sys}. }\label{alg}
\begin{algorithmic}
\Require The coefficient matrices of system \eqref{sys}
\If{Assumption \ref{assum:1} and Assumption \ref{assum:2} hold,}\\
\quad \textbf{Step $1$:} Transform system \eqref{sys} in the form of system \eqref{sys3}.
\\\quad \textbf{Step $2$:} If $\rho \begin{bmatrix}
	    \mathcal{C}_{12}^\top & \mathcal{K}^\top
	\end{bmatrix}^\top \neq \rho(\mathcal{C}_{12}) + \rho (\mathcal{K})$, compute the matrices $\mathcal{Q}$, $\mathcal{S}_{11}, \mathcal{P}_1$ and $\mathcal{P}_2$ such that  $\mathcal{K} = \mathcal{Q}\begin{bmatrix}
	    \mathcal{S}_{11} \\
        \mathcal{P}_{1}\mathcal{S}_{11}+\mathcal{P}_{2}\mathcal{C}_{12}
	\end{bmatrix}$. Otherwise, set $\mathcal{S}_{11} = \mathcal{K}, \mathcal{Q} = \mathbb{I}$, $\mathcal{P}_1$ and $\mathcal{P}_2$ are empty matrices.\\
    \quad \textbf{Step $3$:}  Find $\mathcal{T}_i$, $\mathcal{\bar{M}}_i$, $\mathcal{J}_i$, and $\mathcal{N}_i$ for $i=1,2$ using the expression given above Eq. \eqref{allsol}.
    \If{LMI \eqref{lmi} is feasible for a given $\gamma$}\\
    \quad \textbf{Step $4$:} Compute matrices $\Upsilon=\Upsilon^\top>0$ and $\Upsilon_{1}$.\\
    \quad \textbf{Step $5$:} Compute $\mathcal{Z} = \Upsilon^{-1}\Upsilon_{1}$. \\
    \quad \textbf{Step $6$:} Compute $\mathcal{T}$, $\mathcal{\bar{M}}$, $\mathcal{J}$, and $\mathcal{N}$ using \eqref{allsola}-\eqref{allsolb}.\\
    \quad \textbf{Step $7$:} Compute $\mathcal{L}  = \mathcal{N}\bar{\mathcal{M}}-\mathcal{J}$.
    \Else{\quad LMI \eqref{lmi} is not feasible.}
\EndIf
\Else{ \quad Filter parameter matrices can not be computed via this approach.}
\EndIf
\end{algorithmic}
\end{algorithm}

\section{Numerical illustration}\label{sec:num_example}
The current section gives two numerical examples to show that the proposed functional $\mathscr{H}_{\infty}$ filter \eqref{obs} works perfectly for the system \eqref{sys}.

\begin{example}\label{ex1}
To assess the effectiveness of the proposed framework, a single-link flexible-joint robotic manipulator is considered. The equations of motion governing this electromechanical system, taken from \cite{liu2023state}, are given by:
\allowdisplaybreaks
\begin{figure}[!t]
\centering
\includegraphics[width = 0.5\textwidth]{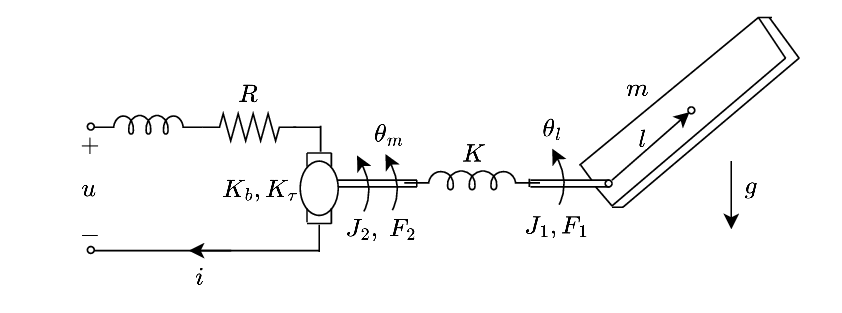}
\caption{A single-link robotic manipulator}
\label{fig:fig1}
\end{figure}
 \begin{eqnarray}\label{link}
J_1 \ddot{\theta}_l+F_1 \dot{\theta}_l+K\left(\theta_l-\frac{\theta_m}{k}\right)+m g l \cos \left(\theta_l\right) &=& 0, \nonumber \\
J_2 \ddot{\theta}_m+F_2 \dot{\theta}_m+\frac{K}{k}\left(\theta_l-\frac{\theta_m}{k}\right) &= &K_\tau i,\nonumber \\
R i+K_b \dot{\theta}_m &=& u,
\end{eqnarray}

where $\theta_m$ and $\theta_l$ are the motor and link angular positions, respectively; $u$ is the applied voltage, and $i$ represents the armature current. Define the state vector as $x = 
\begin{bmatrix}
\theta_l & \dot{\theta}_l & \theta_m & \dot{\theta}_m & i
\end{bmatrix}^\top$, where $\dot{\theta}_m$ and $\dot{\theta}_l$ correspond to the angular velocities of the DC motor and the link, respectively, as illustrated in Fig. \ref{fig:fig1}. To evaluate the robustness and performance of the proposed filter, the spring force is assumed to be influenced by an external disturbance $\xi(t)$; accordingly, system \eqref{link} is directly expressed in the descriptor system form \eqref{sys3} as
\begin{eqnarray}\label{exp}
\begin{bmatrix}
\mathbb{I}_{4} & 0 \\ 0 & 0 
\end{bmatrix}\dot{x}(t)= \begin{bmatrix}
0 & 1 & 0 & 0 & 0 \\
\frac{-K}{J_1} & \frac{-F_1}{J_1} & \frac{K}{J_1k} & 0 & 0 \\ 0 & 0 & 0 & 1 & 0 \\
\frac{-K}{J_2k} & 0 & \frac{K}{J_2k^2} & \frac{-F_2}{J_2} & \frac{K_{\tau}}{J_2} \\
0 & 0 & 0 & -K_b & -R
\end{bmatrix} x(t) \nonumber \\
 +\begin{bmatrix}
0_{4 \times 1} \\ 1
\end{bmatrix}u(t)+ \begin{bmatrix}
    0 \\ 0.2 \\ 0 \\0.5 \\ 0.1
\end{bmatrix} \xi(t)+\begin{bmatrix}
0 \\ \frac{-mgl}{J_1}  \\ 0_{3 \times 1}
\end{bmatrix}\cos x_1(t),~~~~~
  \end{eqnarray}
where $g(H\mathcal{K}x,t) = \cos x_1(t)$ with $H =\begin{bmatrix}
    1 & 0
\end{bmatrix} $. As no unknown input is considered in this example, the matrices $\mathcal{D}_{11}, \mathcal{D}_{12}, \mathcal{C}_{11}$, and vector $v(t)$ do not appear in the model. Let us assume the output and the functional vector are taken as $$y_2(t)=\begin{bmatrix}
    1 & 1 & 0 & 0 & 1 \\
    0 & 0 & 0 & 0 & 1
\end{bmatrix} x(t) ~\text{and}~ z(t)=\begin{bmatrix} 1 & 0_{1 \times 3} & 0\\
0 &0_{1 \times 3}& 1\end{bmatrix}x(t),$$ respectively. The parameters utilized in the simulations are enumerated in the TABLE~\ref{table:1}.
\begin{table}
\caption{Model parameters of the single-link.}
\label{table}
\tablefont
\begin{tabular*}{20.25pc}{@{}p{40pt}p{81pt}<{\raggedright}p{90pt}<{\raggedright}@{}}
\hline\\[-1.5pc]\hline
\centerline{Parameters}& 
\centerline{Description}& 
\centerline{Value} \\
\centerline{$J_1, J_2$} & \centerline{Moment of inertias} & \centerline{$1.625\mathrm{~kg}\mathrm{m}^2$} \\
   \centerline{$F_1, F_2$} &  \centerline{Coefficient of viscous friction} & \centerline{$0.9\mathrm{~Nms}/ \mathrm{rad}$} \\
    \centerline{$K$} & \centerline{Spring constant} & \centerline{$0.5868 \mathrm{Nm} / \mathrm{rad}$} \\
    \centerline{$K_\tau$} & \centerline{Torque constant} & \centerline{ $1 \mathrm{Nm} / \mathrm{A}$} \\
    \centerline{$K_b$} & \centerline{Back emf constant} & \centerline{$1 \mathrm{Nm} / \mathrm{A}$} \\
    \centerline{$R$} & \centerline{Armature resistance} & \centerline{$1\,\Omega$} \\
    \centerline{$l$} & \centerline{Link center of gravity position} & \centerline{$0.5 \mathrm{~m}$} \\
   \centerline{$g$} & \centerline{Acceleration of gravity} & \centerline{$9.8 \mathrm{~N} / \mathrm{kg}$} \\
     \centerline{$m$} & \centerline{Link mass} & \centerline{$4.43 \mathrm{~kg}$} \\
    \centerline{$k$} & \centerline{Gear ratio} & \centerline{$2 $} \\
\hline\\[-1.5pc]\hline
\end{tabular*}
\label{table:1}
\end{table}

Note that the nonlinear function $g$ satisfies the $\delta \mathrm{QC}$ condition described in Assumption \ref{assum:1}, and the corresponding $\delta \mathrm{MM}$ of $g$ is shown in the following
\begin{eqnarray*}
    \kappa\mathcal{W} =\kappa \begin{bmatrix}
        1 &0 \\ 0 &-1
    \end{bmatrix},
\end{eqnarray*}
where the scale factor is set to $\kappa = 1$. 
Since the system \eqref{exp} fulfills Assumption~\ref{assum:2}, Algorithm~\ref{alg} is employed to design the filter.\\
\noindent \textbf{System transformation:}
Since the above system is already expressed in the form \eqref{sys3}, Step~$2$ of Algorithm~\ref{alg} can be directly applied. From Step~$2$, it follows that $\rho \begin{bmatrix}
	    \mathcal{C}_{12}^\top & \mathcal{K}^\top
	\end{bmatrix}^\top = 3\neq 4= \rho(\mathcal{C}_{12}) + \rho (\mathcal{K})$. Therefore, Step $2$ guarantees that some part of the $z(t)$ can be reconstructed using the output $y_2(t)$, and therefore, we obtain $$\mathcal{S}_{11}=\begin{bmatrix}
		1  &  0 & 0 & 0& 0
	\end{bmatrix},~\mathcal{P}_1=0,~
	\mathcal{P}_2=\begin{bmatrix}
	    0 & 1
	\end{bmatrix},$$ $\text{ and }\mathcal{Q}=\mathbb{I}_{2}.$
Therefore, \eqref{sys3c} reduces to
\begin{eqnarray}\label{jj1}
    z(t) =  \mathcal{Q}\begin{bmatrix}
   \mathcal{S}_{11} \\\mathcal{P}_1\mathcal{S}_{11}+\mathcal{P}_2\mathcal{C}_{12}
\end{bmatrix}x(t) = \begin{bmatrix}
      \mathcal{S}_{11}x(t) \\\mathcal{P}_2y_2(t)
\end{bmatrix} = \begin{bmatrix}
    z_1(t) \\ z_2 (t)
\end{bmatrix} .
\end{eqnarray}
It is evident that $z_2(t)$ can be directly obtained from the output $y_2(t)$; therefore, the filter is required only to estimate $z_1(t)$.

    \noindent \textbf{Filter parameter determination:}
    By carrying out the subsequent steps of Algorithm~\ref{alg}, the LMI \eqref{lmi} is solved for $\gamma = 1.003$ and $\mu=0.2$  using the \emph{feasp} function in MATLAB LMI toolbox and obtain 
\begin{eqnarray*}
\mathcal{N} &=& 
    -1
,~\mathcal{T} =\begin{bmatrix}
0 & 0 & 1 & 0& 0
\end{bmatrix}, ~\mathcal{M} =
\begin{bmatrix}
    0 & 0\\
    0 & 1
\end{bmatrix},\notag \\
\mathcal{R} &=& \begin{bmatrix} 
    1 & 0
	\end{bmatrix}^\top, ~\text{and} ~\mathcal{L} = \begin{bmatrix}
1 & -1
\end{bmatrix}.
\end{eqnarray*}

\noindent \textbf{Simulation results:}
A simulation was performed using the input $u(t)= \sin (0.3\pi t)$, the disturbance $\xi(t)= 0.25 \sin(0.4t+0.5 \pi)+0.3$, the initial conditions for the system and the filter as $x_0(t) = \begin{bmatrix}
2.5 & 3 & 0 & 1 & -0.945
\end{bmatrix}^\top$ and $w_0(t) = 2$, respectively. 
All simulations were carried out in MATLAB~R2024b, 
where the system was solved using \texttt{ode15s} 
with absolute and relative tolerances set to $10^{-4}$, 
while the proposed filter was implemented using \texttt{ode45}.Fig.~\ref{fig2} illustrates the estimated and actual time responses of $z(t)$, while Fig.~\ref{fig3} presents the convergence behavior of the error. From Figs. \ref{fig2}–\ref{fig3}, it is evident that the estimate $\hat{z}_2(t)$ perfectly matches the actual state $z_2(t)$ because $z_2(t)$ is directly available from the measured output $y_2(t)$ in \eqref{jj1}. Moreover, the estimated state $\hat{z}_1(t)$ closely follows the actual state $z_1(t)$ even in the presence of external disturbances. These observations indicate that the proposed filter effectively mitigates the impact of external disturbances while maintaining a simple and computationally efficient structure.\\

\begin{figure}[!t]
\centering
\includegraphics[width = 0.5\textwidth]{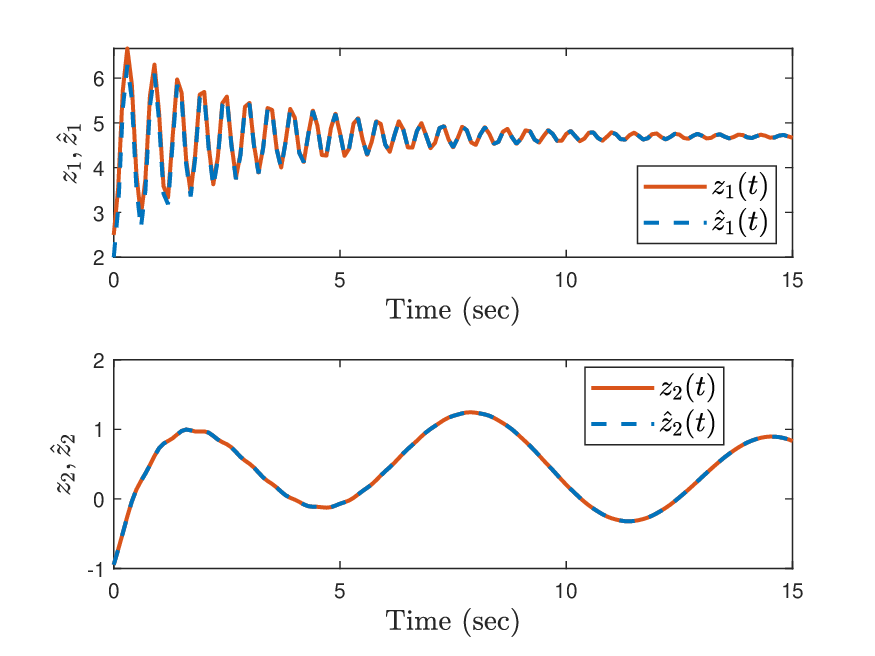}
\caption{Time responses of estimated and actual functional $z(t)$ in the presence of disturbance} 
\label{fig2}
\end{figure}

\begin{figure}[!t]
\centering
\includegraphics[width = 0.5\textwidth]{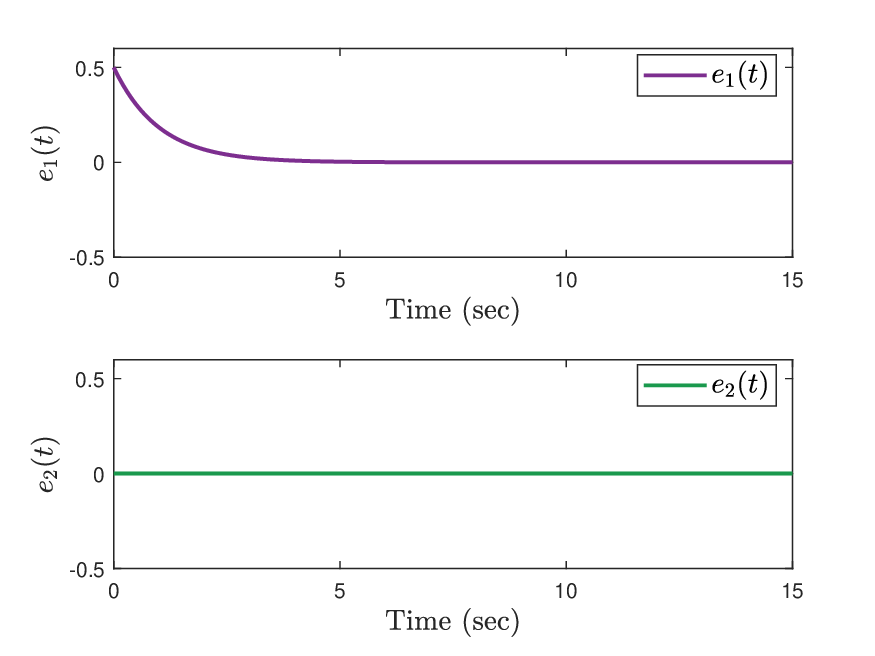}
\caption{Time responses of error $e(t)$} 
\label{fig3}
\end{figure}
\end{example}

\begin{example}
  Consider the descriptor system \eqref{sys} characterized by the following coefficient matrices:
   \begin{eqnarray*}
\mathcal{E} &=&\begin{bmatrix}
1 & 0 & 0 \\
0 & 1 & 0 \\
0 & 0 & 1 \\
 0 & 0 & 0
\end{bmatrix}, \mathcal{A} =\begin{bmatrix}
-9 & 1 & 0 \\
0 & -6 & 1 \\
0 & 0 & -3 \\
0 & 1 & 0
\end{bmatrix},
\mathcal{B} =\begin{bmatrix}
1 \\
0 \\
0 \\
1
\end{bmatrix},  \notag \\ 
\mathcal{D} & =& \begin{bmatrix}
0 \\
1 \\
0 \\
0 
\end{bmatrix},
\mathcal{\bar{D}}_{\xi} =\begin{bmatrix}
2 \\
1 \\
0 \\
0
\end{bmatrix},
\mathcal{F} =\begin{bmatrix}
1 \\
0 \\
0 \\
0
\end{bmatrix}, \mathcal{K} = \begin{bmatrix}
     1 & 0 & 0 \\
     0& 1& 0
\end{bmatrix},\notag \\
\mathcal{C} &=& \begin{bmatrix}
0 & 0 & 1 \end{bmatrix},
 \mathcal{G} =1,  H = \begin{bmatrix}
     1&0
 \end{bmatrix},   \text{and} \; g(t)= e^{-x_1(t)}.
\end{eqnarray*}
We observe that the nonlinear function $g$ fulfills the one-sided Lipschitz condition. The corresponding $\delta\mathrm{MM}$ of $g$ can be obtained as
\begin{eqnarray*}
    \kappa\mathcal{W} =\kappa \begin{bmatrix}
        0 &-0.5 \\ -0.5 &0
    \end{bmatrix},
\end{eqnarray*}
where scaling factor is chosen as $\kappa = 2$. Since, the system also satisfies Assumptions \ref{assum:2}. Hence, we can design a filter for the functional $z(t)$ by Algorithm \ref{alg}.

\noindent \textbf{System transformation:} Applying Step $1$ of Algorithm \ref{alg}, we transform this system into a new coordinate system \eqref{sys3} with the following coefficient matrices. 
\begin{eqnarray*}
\mathcal{E}_{1} &=& \begin{bmatrix}
    -0.92 &   -0.38 & 0   \\ 
    0.38  &  -0.92 & 0 \\
    0 & 0 & 1
\end{bmatrix}, \mathcal{B}_{1} = \begin{bmatrix}
     -0.92 \\ 0.38 \\ 0 
\end{bmatrix}, \notag \\
\Phi &=& \begin{bmatrix}
   8.31 & 1.37 & -0.76 \\
  -3.44 & 5.93 & -1.85 \\
     0 & 0 & -3
\end{bmatrix}, ~\mathcal{F}_{1} = \begin{bmatrix}
   -0.92 \\ 0.38 \\   0
\end{bmatrix}, \notag \\
\mathcal{D}_{\xi} &=& \begin{bmatrix}
   -2.23 \\ -0.16 \\   0
\end{bmatrix},
~\mathcal{D}_{11} = \begin{bmatrix}
   -0.38 \\ -0.92 \\  0
\end{bmatrix}, ~\mathcal{C}_{11} = \begin{bmatrix}
   0 \\ 0 \\ -1
\end{bmatrix}^\top, \notag  \\
\mathcal{C}_{12} &=& \begin{bmatrix}
   0 & 1 & 0
\end{bmatrix}, ~\text{and} ~\mathcal{D}_{12} = \begin{bmatrix}
   \text{empty matrix}
\end{bmatrix}. \end{eqnarray*}
Here,  
 $\rho \begin{bmatrix}
	    \mathcal{C}_{12}^\top & \mathcal{K}^\top
	\end{bmatrix}^\top = 2\neq 3= \rho(\mathcal{C}_{12}) + \rho (\mathcal{K})$. Consequently, Step $2$ guarantees that a portion of the functional vector $z(t)$ can be derived from the output $y_2(t)$. Therefore, we obtain $$\mathcal{S}_{11}=\begin{bmatrix}
		1  &  0 & 0
	\end{bmatrix},~\mathcal{P}_1=0,~
	\mathcal{P}_2=1, \text{ and }\mathcal{Q}=\mathbb{I}_{2}.$$ 
Therefore, \eqref{sys3c} reduces to
\begin{eqnarray}\label{jj}
    z(t) =  \mathcal{Q}\begin{bmatrix}
   \mathcal{S}_{11} \\\mathcal{P}_1\mathcal{S}_{11}+\mathcal{P}_2\mathcal{C}_{12}
\end{bmatrix}x(t) = \begin{bmatrix}
      \mathcal{S}_{11}x(t) \\   y_2(t)
\end{bmatrix} = \begin{bmatrix}
    z_1(t) \\ z_2(t)
\end{bmatrix} .\nonumber\\
\end{eqnarray}
Clearly, we can derive $z_2(t)$ from the output $ y_2(t)$, and filter is required only for $z_1(t)$. 

\noindent \textbf{Filter parameter determination:} By solving the LMI \eqref{lmi} using MATLAB LMI toolbox  for $\mathscr{H}_\infty$ performance level $\gamma = 1.4$ and scalar $\mu =0.3$, and by implementing the subsequent steps of Algorithm~\ref{alg}, the parameter matrices of the proposed filter \eqref{obs} are obtained as follows
\begin{eqnarray*}
\mathcal{N} &=&  -9,~\mathcal{T} =\begin{bmatrix} -0.92 & 0.38 & 0
	\end{bmatrix}, ~\mathcal{M} = \begin{bmatrix}
    0 & 1
	\end{bmatrix}^\top, \notag \\
    \mathcal{R} &=& \begin{bmatrix} 
    1 & 0
	\end{bmatrix}^\top, ~\text{and} ~\mathcal{L} = 1. 
\end{eqnarray*}
\noindent \textbf{Simulation results:}  The estimated and actual values of the functional vector $z(t)$, and their corresponding errors, are depicted in Figs. \ref{fig5}–\ref{fig6}, respectively, for the known input $u(t)= \cos(t)$, unknown input $v(t)=2\sin(t)$, initial condition $x_0(t) = \begin{bmatrix}
1.5 & -1 & 3
\end{bmatrix}^\top$ and $w_0(t) = -5$. Both the system and the proposed filter were solved using MATLAB's \texttt{ode45} solver with absolute and relative tolerances set to $10^{-4}$. In this setup, a bounded random disturbance vector $\xi(t)$ is introduced to model external influences on the system. The disturbance is generated using a uniform random distribution over the interval $[-2,2]$ during the time intervals 
$0 \leq t \leq 5$ and $10 \leq t \leq 15$, and is assumed to be zero at all other time instants, as shown in Fig. \ref{fig4}. The results, presented in Figs. \ref{fig5}–\ref{fig6}, show that the estimated state $\hat{z}_2(t)$ precisely matches with the actual state $z_2(t)$. This is attributed to the fact that $z_2(t)$ is directly included in the output ${y_2(t)}$ from \eqref{jj}. In addition, it is observed that the estimated state $\hat{z}_1(t)$ is significantly less affected by external disturbances than the actual state $z_1(t)$. This indicates that the proposed functional $\mathscr{H}_\infty$ filter design approach exhibits strong disturbance attenuation capability and achieves satisfactory estimation performance.
\begin{figure}[!t]
\centering
\includegraphics[width = 0.5\textwidth]{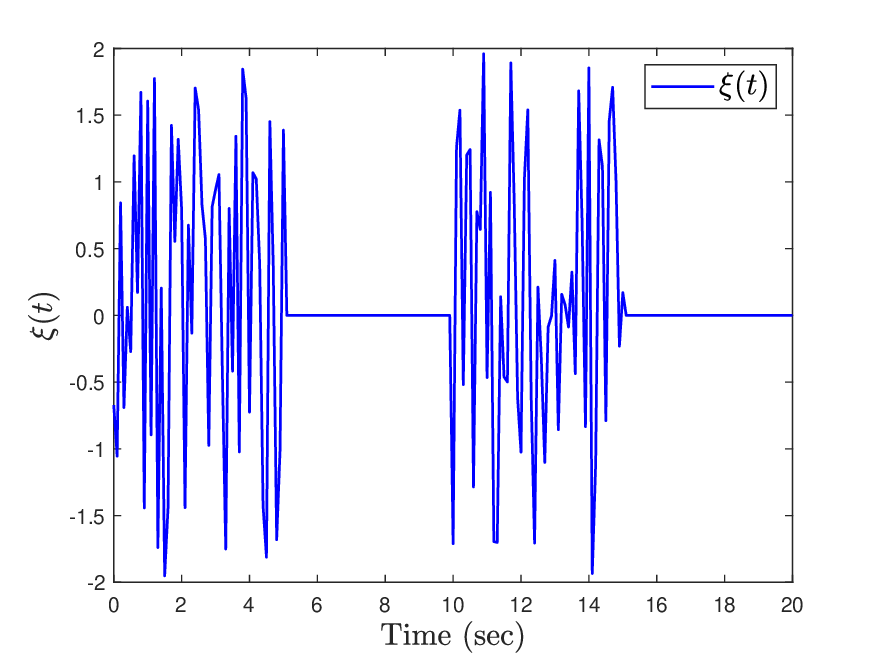}
\caption{Time responses of the bounded uniformly distributed disturbance $\xi(t)$ in $[-2,2]$ for 
the time intervals $0 \leq t \leq 5$ and $10 \leq t \leq 15$, and zero otherwise} 
\label{fig4}
\end{figure}

\begin{figure}[!t]
\centering
\includegraphics[width = 0.5\textwidth]{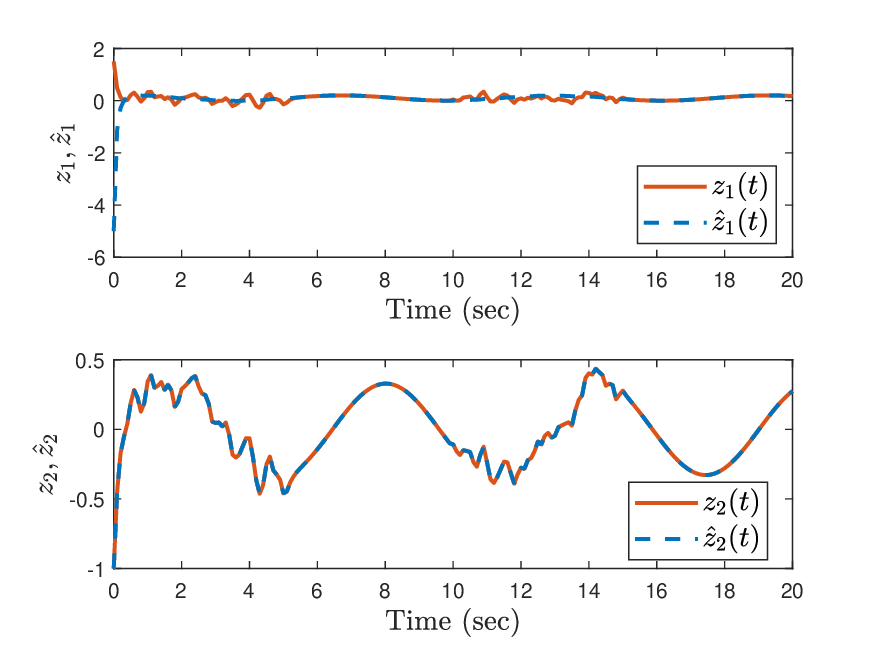}
\caption{Time responses of estimated and actual functional $z(t)$ in the presence of disturbance} 
\label{fig5}
\end{figure}

\begin{figure}[!t]
\centering
\includegraphics[width = 0.5\textwidth]{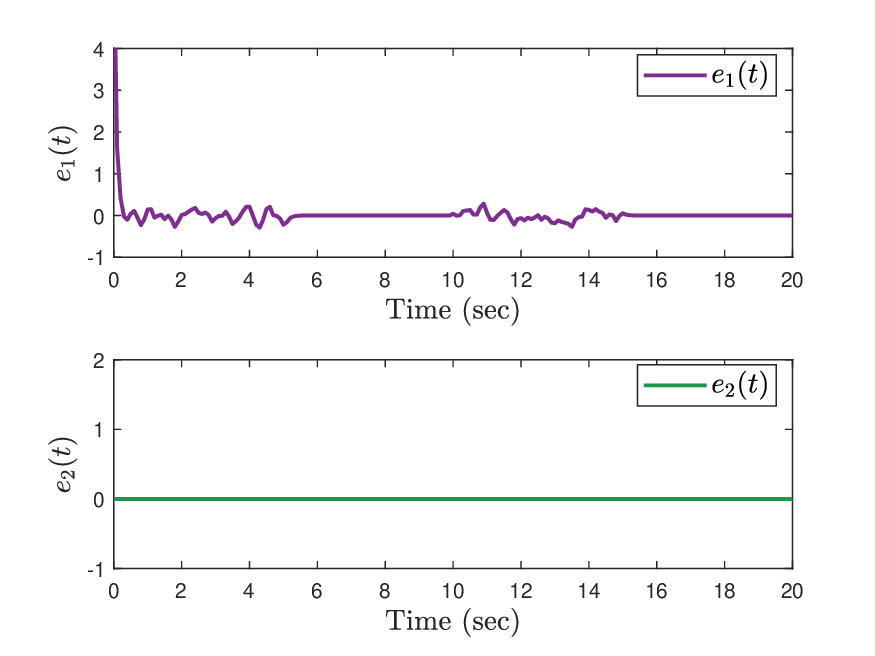}
\caption{Time responses of error $e(t)$} 
\label{fig6}
\end{figure}
\end{example}

\subsection{Comparative Discussion}
Table~\ref{table:comparison} presents a comparative analysis 
of the proposed method with existing filtering approaches in 
terms of filter order, filter form, and nonlinearity 
characterization, and computational complexity. The methods 
in \cite{abbaszadeh2012generalized, darouach2011, meng2024observer} 
rely on the global Lipschitz condition, which may introduce conservatism due to the 
requirement of a uniform incremental bound over the entire 
operating domain. The approach in \cite{sharma2026functional} 
relaxes this assumption through a generalized monotonicity 
condition, whereas the proposed method employs the $\delta$QC 
framework, which encompasses both Lipschitz and monotone 
nonlinearities as special cases, thereby allowing a broader 
class of nonlinear systems to be addressed. Furthermore, the full-order filter in \cite{abbaszadeh2012generalized} is realized in DAE form, which requires consistent initial conditions and imposes 
a higher implementation burden compared to the ODE-based 
realizations adopted by the remaining approaches. As evident 
from Table~\ref{table:comparison}, the proposed method yields 
the lowest flop count per update step, reducing the 
computational cost by approximately $88\%$, $77\%$, $76\%$, 
and $28\%$ compared to~\cite{abbaszadeh2012generalized}, 
\cite{meng2024observer},~\cite{darouach2011}, 
and~\cite{sharma2026functional}, respectively. This reduction 
is attributed to the reduced functional filtering structure, 
which estimates only the desired functional of the state 
rather than the full state vector, making the proposed method 
well-suited for real-time implementation.
\begin{table*}[ht]
\caption{Comparative analysis of existing filtering approaches 
and the proposed method.}
\label{table:comparison}
\tablefont
\centering
\begin{tabular*}{\textwidth}{@{\extracolsep{\fill}}p{120pt}p{70pt}p{80pt}p{85pt}p{70pt}@{}}
\hline\\[-1.5pc]\hline
\centerline{\textbf{Method}} &
\centerline{\textbf{Filter Order}} &
\centerline{\textbf{Filter Form}} &
\centerline{\textbf{Nonlinearity}} &
\centerline{\textbf{Flops (Example-\ref{ex1})}} \\
\centerline{Abbaszadeh \emph{et al.} \cite{abbaszadeh2012generalized}} &
\centerline{Full-order} &
\centerline{DAE} &
\centerline{Lipschitz} &
\centerline{$185$} \\
\centerline{Meng \emph{et al.} \cite{meng2024observer}} &
\centerline{Reduced-order} &
\centerline{ODE} &
\centerline{Lipschitz} &
\centerline{$100$} \\
\centerline{Darouach \emph{et al.} \cite{darouach2011}} &
\centerline{Reduced-order} &
\centerline{ODE} &
\centerline{Lipschitz} &
\centerline{$94$} \\
\centerline{Sharma \emph{et al.}\cite{sharma2026functional}} &
\centerline{Functional Filter} &
\centerline{ODE} &
\centerline{Gen. Monotone} &
\centerline{$32$} \\
\centerline{Proposed Method} &
\centerline{Reduced Functional Filter} &
\centerline{ODE} &
\centerline{$\delta \mathrm{QC}$} &
\centerline{$23$} \\
\hline\\[-1.5pc]\hline
\end{tabular*}
\end{table*}

\section{Conclusions}\label{sec:conclusion}
In this study, we have developed a functional $\mathscr{H}_\infty$ filter design method for nonlinear descriptor systems satisfying the $\delta \mathrm{QC}$ condition in the presence of disturbances. A numerically efficient and easily implementable algorithm is presented for filter design. In cases where redundancy occurs between the functional vector that needs to be estimated and the measured output, the filter order can be chosen lower than the size of the functional vector. We derived sufficient conditions for the existence of the proposed filter in terms of the rank condition \eqref{rank1} and an LMI \eqref{lmi}. Under these conditions, the proposed functional $\mathscr{H}_\infty$ filter renders the error dynamics asymptotically stable, while guaranteeing that the influence of external disturbances on the error remains bounded within a prescribed $\mathcal{L}_{2}$-performance level. However, these conditions are sufficient but not necessary, highlighting the need for further research to derive necessary and sufficient criteria for functional filter existence in nonlinear descriptor systems.

\section*{Appendix}
The estimation problem considered in this paper requires only the
functional $z(t)$ to be uniquely defined, whereas the semistate $x(t)$
of~\eqref{sys} need not be unique. This appendix establishes the rank
condition~\eqref{rank8} underlying the estimation of
$z(t)$ from the available measurements. We show that it is
guaranteed by Assumption~\ref{assum:2} together with the feasibility of
LMI~\eqref{lmi}, and is necessary for the existence of any functional
filter of the form~\eqref{obs}, regardless of the design procedure
employed.

To establish these results, we consider, in addition to the rank
condition~\eqref{rank1} of Assumption~\ref{assum:2}, the following
condition on the coefficient matrices of system~\eqref{sys}:
\begin{equation}\label{rank2}
\rank(\Gamma)=\rank(\Omega),\qquad\forall\,\lambda\in\bar{\mathbb{C}}^{+},
\end{equation}
where $\Omega$ is as in Assumption~\ref{assum:2} and
\begingroup
\setlength{\arraycolsep}{2.5pt}
\begin{equation*}
\Gamma=\begin{bmatrix}
\mathcal{E} & \mathcal{A} & 0 & \mathcal{D} & 0 & \bar{\mathcal{D}}_{\xi} & \mathcal{F}\\
0 & \mathcal{E} & \mathcal{A} & 0 & \mathcal{D} & 0 & 0\\
0 & \mathcal{C} & 0 & \mathcal{G} & 0 & 0 & 0\\
0 & 0 & \mathcal{C} & 0 & \mathcal{G} & 0 & 0\\
0 & \mathcal{K} & \lambda\mathcal{K} & 0 & 0 & 0 & 0
\end{bmatrix}.
\end{equation*}
\endgroup
\begin{remark}\label{rem:rank2}
Condition~\eqref{rank2} is not imposed as an additional assumption on
system~\eqref{sys}. Instead, as shown in Lemma~\ref{lem:detect}, it follows
directly from the feasibility of LMI~\eqref{lmi}.
\end{remark}

The following lemma reduces the rank conditions~\eqref{rank1}
and~\eqref{rank2} on system~\eqref{sys} to equivalent conditions on the
transformed systems~\eqref{sys3} and~\eqref{sys2}, respectively.

\begin{lemma}\label{lem:rank-equiv}
Let $\Omega_1$ and $\Psi_1$ be as in~\eqref{rank}, and define
\begingroup
\setlength{\arraycolsep}{2.5pt}
\begin{gather*}
\Gamma_1=\begin{bmatrix}
\mathcal{E}_{1} & \Phi & \mathcal{D}_{12}\\
\mathcal{C}_{12} & 0 & 0\\
0 & \mathcal{C}_{12} & 0\\
\mathcal{S}_{11} & \lambda\mathcal{S}_{11} & 0
\end{bmatrix},\quad
\Omega_2=\begin{bmatrix}
\mathcal{E}_{1} & \Phi & \mathcal{D}_{12}\\
\mathcal{C}_{12} & 0 & 0\\
0 & \mathcal{C}_{12} & 0\\
0 & -\mathcal{K} & 0
\end{bmatrix},\\[4pt]
\Psi_2=\begin{bmatrix}
\Omega_2\\
\begin{bmatrix}\mathcal{K} & 0 & 0\end{bmatrix}
\end{bmatrix},\quad\text{and}\quad
\Gamma_2=\begin{bmatrix}
\mathcal{E}_{1} & \Phi & \mathcal{D}_{12}\\
\mathcal{C}_{12} & 0 & 0\\
0 & \mathcal{C}_{12} & 0\\
\mathcal{K} & \lambda\mathcal{K} & 0
\end{bmatrix}.
\end{gather*}
\endgroup
Then the following equivalences hold.
\begin{enumerate}
\item Condition~\eqref{rank1} $\iff$ condition~\eqref{rank} $\iff$
\begin{equation}\label{rank5}
\rho(\Psi_2)=\rho(\Omega_2).
\end{equation}
\item Condition~\eqref{rank2} $\iff$
\begin{equation}\label{rank4}
\rho(\Gamma_1)=\rho(\Omega_1),\qquad\forall\,\lambda\in\overline{\mathbb C}^{+},
\end{equation}
$\iff$
\begin{equation}\label{rank6}
\rho(\Gamma_2)=\rho(\Omega_2),\qquad\forall\,\lambda\in\overline{\mathbb C}^{+}.
\end{equation}
\end{enumerate}
\end{lemma}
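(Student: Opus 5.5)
Part 1 already contains, via Step 1 of the proof of Theorem~\ref{thm1}, the equivalence \eqref{rank1} $\iff$ \eqref{rank}. That argument in fact first proves \eqref{rank1} $\iff$ \eqref{rank5}, and only afterwards substitutes the factorization of $\mathcal{K}$. So I will split that computation at the intermediate stage. I pre-multiply $\Omega$ and $\Psi$ by $\tilde{\mathcal P}=\text{blk-diag}(\mathcal P,\mathcal P,\mathbb I)$ and use the full row rank of $[\mathcal E_1~\mathcal D_\xi~\mathcal F_1]$ with Lemma~\ref{lem2} to strip off $n_0$. I then apply $\tilde U,\tilde V$ and the block row operation removing $\mathcal D_{11}$, and strip the two $\mathbb I_{q_1}$ blocks by Lemma~\ref{lem2}. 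This gives $\rho(\Omega)=n_0+2q_1+\rho(\Omega_2)$ and $\rho(\Psi)=n_0+2q_1+\rho(\Psi_2)$.

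One bookkeeping point needs checking: the extra row $[0~\mathcal K~0\cdots]$ of $\Psi$ sits in the first column of the reduced block (the $\mathcal E_1$ column), not in the columns being eliminated. It therefore becomes $[\mathcal K~0~0]$ in $\Psi_2$, and the row operations do not touch it. This yields \eqref{rank1} $\iff$ \eqref{rank5}.

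For \eqref{rank5} $\iff$ \eqref{rank}, I substitute $\mathcal K=\mathcal Q[\mathcal S_{11};\mathcal P_1\mathcal S_{11}+\mathcal P_2\mathcal C_{12}]$ in both $\Omega_2$ and $\Psi_2$ and drop the permutation $\mathcal Q$. The rows $-\mathcal P_1\mathcal S_{11}-\mathcal P_2\mathcal C_{12}$ in the second block column are eliminated using the rows $[0~\mathcal C_{12}~0]$ and $[0~-\mathcal S_{11}~0]$, exactly as in operation 5) of that proof. In $\Psi_2$ the extra row $[\mathcal P_1\mathcal S_{11}+\mathcal P_2\mathcal C_{12}~0~0]$ is eliminated in the same way, using $[\mathcal C_{12}~0~0]$ and the new row $[\mathcal S_{11}~0~0]$. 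Thus $\rho(\Omega_2)=\rho(\Omega_1)$ and $\rho(\Psi_2)=\rho(\Psi_1)$.

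For part 2, I repeat the same reduction on $\Gamma$; since $\lambda$ is a fixed parameter, each transformation is a nonsingular operation for every fixed $\lambda$. Pre-multiplying by $\tilde{\mathcal P}$ and using Lemma~\ref{lem2} strips $n_0$. The last row $[0~\mathcal K~\lambda\mathcal K~0\cdots]$ lives only in the second and third block columns, which become the $\mathcal E_1$ and $\Phi$ columns. The $\tilde U,\tilde V$ transformation and the removal of $\mathcal D_{11}$ leave this row unchanged, since it has zeros in the $v$-columns. Hence $\rho(\Gamma)=n_0+2q_1+\rho(\Gamma_2)$ for every $\lambda$. Combining this with $\rho(\Omega)=n_0+2q_1+\rho(\Omega_2)$ gives \eqref{rank2} $\iff$ \eqref{rank6}.

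For \eqref{rank6} $\iff$ \eqref{rank4}, I must compare $\Gamma_2$ with $\Gamma_1$ and $\Omega_2$ with $\Omega_1$. The second comparison is already done above. For the first, I substitute the factorization of $\mathcal K$; the rows of $[\mathcal K~\lambda\mathcal K~0]$ split, after the permutation $\mathcal Q$, into $[\mathcal S_{11}~\lambda\mathcal S_{11}~0]$ and a second row of the form
\[
\mathcal P_1[\mathcal S_{11}~\lambda\mathcal S_{11}~0]+\mathcal P_2[\mathcal C_{12}~\lambda\mathcal C_{12}~0].
\]
The second term is a combination of the rows $[\mathcal C_{12}~0~0]$ and $\lambda[0~\mathcal C_{12}~0]$ of $\Gamma_1$. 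The whole row can therefore be eliminated, and $\rho(\Gamma_2)=\rho(\Gamma_1)$ for every $\lambda$.

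The main obstacle is purely bookkeeping: verifying at each elementary operation that the appended rows ($[\mathcal K~0~0]$ and $[\mathcal K~\lambda\mathcal K~0]$) have zero entries in the columns that Lemma~\ref{lem2} discards. Otherwise the "similar calculation" claimed in the proof of Theorem~\ref{thm1} would fail. I would write the relevant block matrices explicitly once to confirm this.
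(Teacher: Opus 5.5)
Your proposal is correct and follows essentially the paper's route: the Step~1 reductions from the proof of Theorem~\ref{thm1}, halted before substituting the factorization of $\mathcal{K}$, give $\rho(\Omega)=n_0+2q_1+\rho(\Omega_2)$, $\rho(\Psi)=n_0+2q_1+\rho(\Psi_2)$ and $\rho(\Gamma)=n_0+2q_1+\rho(\Gamma_2)$, and your bookkeeping of the appended rows $[\mathcal{K}~0~0]$ and $[\mathcal{K}~\lambda\mathcal{K}~0]$ is accurate. The only difference is that you reach \eqref{rank4} through $\rho(\Gamma_2)=\rho(\Gamma_1)$ and $\rho(\Omega_2)=\rho(\Omega_1)$, using the explicit $\lambda$-weighted elimination with the row $[0~\mathcal{C}_{12}~0]$; the paper instead asserts $\rho(\Gamma)=n_0+2q_1+\rho(\Gamma_1)$ directly, which is just the composition of your two steps.
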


\begin{proof}
The equivalence between conditions~\eqref{rank1} and~\eqref{rank} has already been established in Step~1 of the proof of Theorem~\ref{thm1}. Therefore, it remains only to prove the remaining equivalences.
\noindent Repeating the transformations used in Step~1 of the proof of
Theorem~\ref{thm1} for the matrix $\Gamma$ yields
\begin{equation}\label{rGamma}
\rho(\Gamma)=n_0+2q_1+\rho(\Gamma_1),
\qquad
\forall\,\lambda\in\overline{\mathbb C}^{+}.
\end{equation}
From~\eqref{r1} and~\eqref{rGamma}, it follows immediately that
\[
\rho(\Gamma)=\rho(\Omega)
\iff
\rho(\Gamma_1)=\rho(\Omega_1).
\]

\noindent Next, before introducing the decomposition of $\mathcal K$ (cf.  Step~1), we apply the same sequence of rank-preserving transformations to obtain
\[
\rho(\Omega)=n_0+2q_1+\rho(\Omega_2),\qquad
\rho(\Psi)=n_0+2q_1+\rho(\Psi_2),
\]
and
\[
\rho(\Gamma)=n_0+2q_1+\rho(\Gamma_2),
\qquad
\forall\,\lambda\in\overline{\mathbb C}^{+}.
\]
It follows from the above identities that
\[
\rho(\Psi)=\rho(\Omega)
\iff
\rho(\Psi_2)=\rho(\Omega_2),
\]
and
\[
\rho(\Gamma)=\rho(\Omega)
\iff
\rho(\Gamma_2)=\rho(\Omega_2).
\]
\end{proof}

The next lemma links the feasibility of LMI~\eqref{lmi} to
condition~\eqref{rank2} through the detectability of the pair
$(\mathcal{N}_1,\mathcal{N}_2)$ defined in~\eqref{allsol}.

\begin{lemma}\label{lem:detect}
Let Assumption~\ref{assum:2} hold, let $\mathcal{W}_{11}\geq0$, and let the
pair $(\mathcal{N}_1,\mathcal{N}_2)$ be as in~\eqref{allsol}. Then:
\begin{enumerate}
\item the feasibility of LMI~\eqref{lmi} implies the detectability of
$(\mathcal{N}_1,\mathcal{N}_2)$;
\item the pair $(\mathcal{N}_1,\mathcal{N}_2)$ is detectable if and only if
condition~\eqref{rank4} holds.
\end{enumerate}
\end{lemma}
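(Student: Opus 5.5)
The plan is to prove the two items separately. Item~1 is a direct Lyapunov argument. Item~2 is a Popov--Belevitch--Hautus (PBH) type kernel computation built on the explicit parametrization \eqref{gensol}--\eqref{allsol}.

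\emph{Item 1.} I would use the fact that every principal submatrix of a negative semidefinite matrix is negative semidefinite. Feasibility of \eqref{lmi} therefore gives $\Xi_{11}+\bar R^\top\bar R+\mu\mathbb{I}\le 0$. Since $\Upsilon_1=\Upsilon\mathcal Z$, the Lyapunov part of $\Xi_{11}$ can be rewritten as
\[
\mathcal N_1^\top\Upsilon+\Upsilon\mathcal N_1-\mathcal N_2^\top\Upsilon_1^\top-\Upsilon_1\mathcal N_2=(\mathcal N_1-\mathcal Z\mathcal N_2)^\top\Upsilon+\Upsilon(\mathcal N_1-\mathcal Z\mathcal N_2).
\]
The assumption $\mathcal W_{11}\ge0$ and $\kappa>0$ make $\bar H^\top\kappa\mathcal W_{11}\bar H\ge0$, and $\bar R^\top\bar R\ge0$ always holds. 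Dropping both terms gives
\[
(\mathcal N_1-\mathcal Z\mathcal N_2)^\top\Upsilon+\Upsilon(\mathcal N_1-\mathcal Z\mathcal N_2)\le-\mu\mathbb I<0
\]
with $\Upsilon>0$. Hence $\mathcal N_1-\mathcal Z\mathcal N_2$ is Hurwitz, and by Definition~\ref{def:detect} the pair $(\mathcal N_1,\mathcal N_2)$ is detectable.

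\emph{Item 2.} I would work with kernels. Let $E_N$ be the block column selector, so that $\mathcal N_i=\mathcal L_iE_N$. By Definition~\ref{def:detect}, detectability fails if and only if there exist $\lambda\in\overline{\mathbb C}^+$ and $v\neq0$ with $\mathcal N_1v=\lambda v$ and $\mathcal N_2v=0$.

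Put $w=E_Nv$, which is the vector whose only nonzero block is $v$ in the last block position. The condition $\mathcal L_2w=(\mathbb I-\Omega_1\Omega_1^+)w=0$ says exactly that $w\in\operatorname{col}(\Omega_1)$, i.e.\ $w=\Omega_1h$ for some $h=[h_1^\top\ h_2^\top\ h_3^\top]^\top$. Reading $\Omega_1h=E_Nv$ block by block gives
\[
\mathcal E_1h_1+\Phi h_2+\mathcal D_{12}h_3=0,\qquad \mathcal C_{12}h_1=0,\qquad \mathcal C_{12}h_2=0,\qquad v=-\mathcal S_{11}h_2 .
\]
By Lemma~\ref{lem4} and Assumption~\ref{assum:2} (in the equivalent form \eqref{rank}), $\Theta\Omega_1^+\Omega_1=\Theta$. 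Hence
\[
\mathcal N_1v=\mathcal L_1w=\Theta\Omega_1^+\Omega_1h=\Theta h=\mathcal S_{11}h_1 .
\]
The eigen-condition $\mathcal N_1v=\lambda v$ thus becomes $\mathcal S_{11}h_1+\lambda\mathcal S_{11}h_2=0$.

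Combining these, detectability fails if and only if some $h\in\ker\Gamma_1$ (for some $\lambda\in\overline{\mathbb C}^+$) has $\mathcal S_{11}h_2\neq0$. The converse direction is obtained by reversing the steps: given such an $h$, set $v=-\mathcal S_{11}h_2\ne0$; then $E_Nv=\Omega_1h$, so $\mathcal N_2v=\mathcal L_2E_Nv=0$ and $\mathcal N_1v=\mathcal S_{11}h_1=\lambda v$.

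\emph{Comparing the kernels.} It remains to show that, for fixed $\lambda$, the existence of such an $h$ is equivalent to $\rho(\Gamma_1)\ne\rho(\Omega_1)$. Since $\Gamma_1$ and $\Omega_1$ have the same number of columns, $\rho(\Gamma_1)=\rho(\Omega_1)$ if and only if $\ker\Gamma_1=\ker\Omega_1$. I would establish the inclusion $\ker\Omega_1\subseteq\ker\Gamma_1$ unconditionally, using Assumption~\ref{assum:2} once more:
\begin{enumerate}
\item Condition \eqref{rank} says $\rho(\Psi_1)=\rho(\Omega_1)$, so $\ker\Omega_1=\ker\Psi_1$.
\item Hence every $h\in\ker\Omega_1$ satisfies $\mathcal S_{11}h_1=0$, and also $\mathcal S_{11}h_2=0$ from the last block row of $\Omega_1$.
\item Therefore $h\in\ker\Gamma_1$.
\end{enumerate}
With this inclusion in hand, $\ker\Gamma_1=\ker\Omega_1$ holds if and only if every $h\in\ker\Gamma_1$ has $\mathcal S_{11}h_2=0$. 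Indeed, the first three block rows of $\Gamma_1$ and $\Omega_1$ coincide, and only the last block row differs. This yields the equivalence with \eqref{rank4}.

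I expect the main obstacle to be the middle step of item~2: passing correctly from the parametrized coefficients $\mathcal L_1,\mathcal L_2$ back to $\Omega_1$. One must keep straight that $\mathcal N_i$ is the last block column of $\mathcal L_i$, and one needs the identity $\Theta\Omega_1^+\Omega_1=\Theta$. That identity is precisely where the solvability condition \eqref{rank} enters, alongside its second use in establishing $\ker\Omega_1\subseteq\ker\Gamma_1$.
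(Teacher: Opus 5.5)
Your item~1 is the same argument as the paper's. Your item~2 is correct but follows a different route. The paper argues entirely through rank identities. It uses Lemma~\ref{lem2} to replace the PBH matrix by the enlarged matrix \eqref{detE8}. By \eqref{detE9}, that matrix is exactly $\bigl[\begin{smallmatrix}\mathrm{M}\\ \mathbb{I}-\Omega_1\Omega_1^{+}\end{smallmatrix}\bigr]$, where $\mathrm{M}$ stacks the three identity selector rows on $\Theta\Omega_1^{+}+\begin{bmatrix}0&0&0&-\lambda\mathbb{I}\end{bmatrix}$. The rank criterion of Lemma~\ref{lem:fcr} then converts full column rank into $\rho(\mathrm{M}\Omega_1)=\rho(\Omega_1)$. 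Finally, $\mathrm{M}\Omega_1=\Gamma_1$ because $\Theta\Omega_1^{+}\Omega_1=\Theta$.

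You instead run the PBH eigenvector test by hand. You build the explicit correspondence $v=-\mathcal{S}_{11}h_2$ between unobservable eigenvectors of $(\mathcal{N}_1,\mathcal{N}_2)$ with eigenvalue in $\overline{\mathbb{C}}^{+}$ and vectors $h\in\ker\Gamma_1$ with $h\notin\ker\Omega_1$. This is precisely the kernel content of Lemma~\ref{lem:fcr} for this particular $\mathrm{M}$: a nonzero kernel vector of \eqref{detE10} is $\Omega_1h$ with $\Gamma_1h=0$. Working directly with $w=E_Nv$ also lets you skip the enlargement step. The paper's version is shorter given the cited lemmas; yours is self-contained and shows concretely which modes obstruct detectability.

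Two small points. First, the sentence asserting $\rho(\Gamma_1)=\rho(\Omega_1)$ iff $\ker\Gamma_1=\ker\Omega_1$ holds only once the inclusion $\ker\Omega_1\subseteq\ker\Gamma_1$ is known, so state it after that inclusion. Second, your ``second use'' of Assumption~\ref{assum:2} is not a separate ingredient. Since $\Gamma_1=\mathrm{M}\Omega_1$, the inclusion is automatic, and both uses rest on the single fact $\row(\Theta)\subseteq\row(\Omega_1)$.
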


\begin{proof}
\emph{Part 1).}
Let LMI~\eqref{lmi} be feasible, i.e., $\Xi\leq0$. Since every principal
submatrix of a negative semidefinite matrix is itself negative
semidefinite, the $(1,1)$ principal block of $\Xi$ satisfies
\begin{equation}\label{detE3}
\Xi_{11}+\bar{R}^{\top}\bar{R}+\mu\mathbb{I}\leq0,
\end{equation}
where $\Xi_{11}$ is given by~\eqref{cof1}. Since $\kappa>0$, $\mathcal{W}_{11}\ge0$, $\bar R^{\top}\bar R\ge0$, and
$\mu I>0$, inequality~\eqref{detE3} implies
\begin{equation}\label{detE5}
\mathcal{N}_1^{\top}\Upsilon+\Upsilon\mathcal{N}_1
-\mathcal{N}_2^{\top}\Upsilon_1^{\top}-\Upsilon_1\mathcal{N}_2<0.
\end{equation}
Substituting $\Upsilon_1=\Upsilon\mathcal{Z}$ into~\eqref{detE5} and using
$\Upsilon=\Upsilon^{\top}>0$ gives
\begin{equation}\label{detE6}
(\mathcal{N}_1-\mathcal{Z}\mathcal{N}_2)^{\top}\Upsilon
+\Upsilon(\mathcal{N}_1-\mathcal{Z}\mathcal{N}_2)<0.
\end{equation}
Thus, $\mathcal N_1-\mathcal Z\mathcal N_2$ is Hurwitz. By
Definition~\ref{def:detect}, this is equivalent to the detectability of
$(\mathcal N_1,\mathcal N_2)$.

\emph{Part 2).}
By Definition~\ref{def:detect}, $(\mathcal{N}_1,\mathcal{N}_2)$ is
detectable if and only if
$\begin{bmatrix}(\mathcal{N}_1-\lambda\mathbb{I})^{\top} &
\mathcal{N}_2^{\top}\end{bmatrix}^{\top}$ has full column rank (FCR) for all
$\lambda\in\overline{\mathbb C}^{+}$. By Lemma~\ref{lem2}, this is
equivalent to the FCR of
\begin{equation}\label{detE8}
\begin{bmatrix}
\mathbb{I} & 0 & 0 & 0\\
0 & \mathbb{I} & 0 & 0\\
0 & 0 & \mathbb{I} & 0\\
\mathcal{T}_1 & \bar{\mathcal{M}}_1 & \mathcal{J}_1 & \mathcal{N}_1-\lambda\mathbb{I}\\
\mathcal{T}_2 & \bar{\mathcal{M}}_2 & \mathcal{J}_2 & \mathcal{N}_2
\end{bmatrix}.
\end{equation}
From the general solution~\eqref{gensol} and the partitions
preceding~\eqref{allsol},
\begin{equation}\label{detE9}
\begin{aligned}
\Theta\Omega_1^{+}&=\begin{bmatrix}\mathcal{T}_1 & \bar{\mathcal{M}}_1 & \mathcal{J}_1 & \mathcal{N}_1\end{bmatrix},\\
\mathbb{I}-\Omega_1\Omega_1^{+}&=\begin{bmatrix}\mathcal{T}_2 & \bar{\mathcal{M}}_2 & \mathcal{J}_2 & \mathcal{N}_2\end{bmatrix}.
\end{aligned}
\end{equation}
Substituting~\eqref{detE9} into~\eqref{detE8}, the FCR of~\eqref{detE8} is
equivalent, for all $\lambda\in\overline{\mathbb C}^{+}$, to the FCR of
\begin{equation}\label{detE10}
\begin{bmatrix}
\begin{bmatrix}
\mathbb{I} & 0 & 0 & 0\\
0 & \mathbb{I} & 0 & 0\\
0 & 0 & \mathbb{I} & 0
\end{bmatrix}\\[3pt]
\Theta\Omega_1^{+}+\begin{bmatrix}0 & 0 & 0 & -\lambda\mathbb{I}\end{bmatrix}\\[3pt]
\mathbb{I}-\Omega_1\Omega_1^{+}
\end{bmatrix}.
\end{equation}
By Lemma~\ref{lem:fcr}, the matrix in~\eqref{detE10} has FCR if and only if,
for all $\lambda\in\overline{\mathbb C}^{+}$,
\begin{equation}\label{detE11}
\rho\!\left(
\begin{bmatrix}
\begin{bmatrix}
\mathbb{I} & 0 & 0 & 0\\
0 & \mathbb{I} & 0 & 0\\
0 & 0 & \mathbb{I} & 0
\end{bmatrix}\\[3pt]
\Theta\Omega_1^{+}+\begin{bmatrix}0 & 0 & 0 & -\lambda\mathbb{I}\end{bmatrix}
\end{bmatrix}\Omega_1\right)=\rho(\Omega_1).
\end{equation}
Expanding~\eqref{detE11} by direct matrix multiplication yields
$\rho(\Gamma_1)=\rho(\Omega_1)$ for all
$\lambda\in\overline{\mathbb C}^{+}$, which is precisely~\eqref{rank4}.
\end{proof}
\begin{remark}\label{rem:chain}
By Lemma~\ref{lem:rank-equiv}, conditions~\eqref{rank4} and~\eqref{rank2}
are equivalent. Hence, the detectability of the pair
$(\mathcal{N}_1,\mathcal{N}_2)$ is also equivalent to
condition~\eqref{rank2}.
\end{remark}

\noindent The next lemma shows that the rank conditions~\eqref{rank5} and \eqref{rank6}
imply condition~\eqref{rank8} on system~\eqref{sys2}.


\begin{lemma}\label{lem:final-rank}
If the rank conditions~\eqref{rank5} and \eqref{rank6} hold, then for all
$\lambda\in\bar{\mathbb{C}}^{+}$,
\begin{equation}\label{rank8}
\rank\begin{bmatrix}\lambda\mathcal{E}_1-\Phi\\ \mathcal{C}_{12}\\
\mathcal{K}\end{bmatrix}
=\rank\begin{bmatrix}\lambda\mathcal{E}_1-\Phi\\ \mathcal{C}_{12}\end{bmatrix}.
\end{equation}
\end{lemma}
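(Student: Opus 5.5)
The plan is to argue through kernels, using the solvability of the filter equations and the detectability established in Lemma~\ref{lem:detect}. By Lemma~\ref{lem:rank-equiv}, \eqref{rank5} is equivalent to \eqref{rank} (and hence to \eqref{rank1}), and \eqref{rank6} is equivalent to \eqref{rank4}. So I may work with the $\mathcal{S}_{11}$-form of the conditions from Step~2 of the proof of Theorem~\ref{thm1}. The claim \eqref{rank8} is a statement about column ranks of matrices sharing the same columns. It is therefore equivalent to the inclusion
\[
\ker\begin{bmatrix}\lambda\mathcal{E}_1-\Phi\\ \mathcal{C}_{12}\end{bmatrix}\subseteq\ker\mathcal{K}
\]
over $\mathbb{C}^n$, for each fixed $\lambda\in\bar{\mathbb{C}}^{+}$. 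Fix such a $\lambda$ and a vector $x_0$ with $(\lambda\mathcal{E}_1-\Phi)x_0=0$ and $\mathcal{C}_{12}x_0=0$. The goal is to show $\mathcal{K}x_0=0$.

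\textbf{Step 1 (algebraic identities).} Since \eqref{rank} holds, Lemma~\ref{lem4} shows that \eqref{psitheta} is solvable, with general solution \eqref{allsol}. Hence, for every free parameter $\mathcal{Z}$, the matrices $\mathcal{T},\bar{\mathcal{M}},\mathcal{J},\mathcal{N}$ from \eqref{allsol} satisfy
\[
\mathcal{T}\mathcal{E}_1+\bar{\mathcal{M}}\mathcal{C}_{12}=\mathcal{S}_{11},\qquad \mathcal{T}\Phi+\mathcal{J}\mathcal{C}_{12}=\mathcal{N}\mathcal{S}_{11},\qquad \mathcal{T}\mathcal{D}_{12}=0.
\]
Applying these identities to $x_0$ and using $\mathcal{C}_{12}x_0=0$ gives $\mathcal{S}_{11}x_0=\mathcal{T}\mathcal{E}_1x_0$ and $\mathcal{N}\mathcal{S}_{11}x_0=\mathcal{T}\Phi x_0$. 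Since $\Phi x_0=\lambda\mathcal{E}_1 x_0$, it follows that
\[
\mathcal{N}\mathcal{S}_{11}x_0=\lambda\,\mathcal{T}\mathcal{E}_1x_0=\lambda\,\mathcal{S}_{11}x_0 .
\]
So either $\mathcal{S}_{11}x_0=0$, or $\lambda$ is an eigenvalue of $\mathcal{N}=\mathcal{N}_1-\mathcal{Z}\mathcal{N}_2$ with eigenvector $\mathcal{S}_{11}x_0$.

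\textbf{Step 2 (choosing $\mathcal{Z}$).} By \eqref{rank4} and Part~2 of Lemma~\ref{lem:detect}, the pair $(\mathcal{N}_1,\mathcal{N}_2)$ is detectable. Part~2 uses only the rank characterization, not the assumption $\mathcal{W}_{11}\ge 0$. By Definition~\ref{def:detect}, there is therefore a $\mathcal{Z}$ for which $\mathcal{N}$ is Hurwitz. For that $\mathcal{Z}$, no $\lambda\in\bar{\mathbb{C}}^{+}$ is an eigenvalue of $\mathcal{N}$, so Step~1 forces $\mathcal{S}_{11}x_0=0$.

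\textbf{Step 3 (conclusion).} From the decomposition of $\mathcal{K}$,
\[
\mathcal{K}x_0=\mathcal{Q}\begin{bmatrix}\mathcal{S}_{11}x_0\\ \mathcal{P}_1\mathcal{S}_{11}x_0+\mathcal{P}_2\mathcal{C}_{12}x_0\end{bmatrix}=0 .
\]
In Case~2 this reads directly $\mathcal{S}_{11}=\mathcal{K}$. This proves the kernel inclusion, and hence \eqref{rank8}, for every $\lambda\in\bar{\mathbb{C}}^{+}$.

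The main obstacle I expect is justifying the passage from real matrix identities to a possibly complex $\lambda$ and $x_0$. This should be harmless, since the identities in Step~1 are linear and hold verbatim over $\mathbb{C}$. A second point to check is that the hypotheses \eqref{rank5} and \eqref{rank6}, stated in terms of $\mathcal{K}$, really transfer to the $\mathcal{S}_{11}$-forms \eqref{rank} and \eqref{rank4} for the same $\lambda$. I would rely on Lemma~\ref{lem:rank-equiv} for this rather than redoing the row operations.
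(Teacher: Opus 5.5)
Your argument is correct, but it follows a different route from the paper's proof.

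\textbf{The paper's route.} It never passes to the $\mathcal{S}_{11}$-forms, the parametrization \eqref{allsol}, or detectability. It combines \eqref{rank5} and \eqref{rank6} into $\rho(\Psi_2)=\rho(\Gamma_2)$. It then applies the nonsingular block operations $\hat U_4$, $\hat U_3$, $\hat V_3$, after which the two matrices differ only by the extra row $\begin{bmatrix}0&\mathcal{K}&0\end{bmatrix}$, with middle block column $\lambda\mathcal{E}_1-\Phi$. Finally it invokes Lemma~\ref{lem:blockrank} twice: first to discard the first block column, giving \eqref{rank7}, and then to discard the $\mathcal{D}_{12}$ column, giving \eqref{rank8}. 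That proof is short and self-contained. It is also pointwise in $\lambda$: \eqref{rank5} together with \eqref{rank6} at a single $\lambda$ already yields \eqref{rank8} at that $\lambda$.

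\textbf{Your route.} Your kernel/eigenvector argument relies on more machinery: Lemma~\ref{lem:rank-equiv}, the parametrization \eqref{allsol}, and Part~2 of Lemma~\ref{lem:detect}. You are right that $\mathcal{W}_{11}\ge 0$ plays no role in Part~2. As written, it needs \eqref{rank4} on all of $\overline{\mathbb{C}}^{+}$ in order to produce a single Hurwitz $\mathcal{N}$.

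\textbf{What your route buys.} It gives insight into the mechanism. The identity $\mathcal{N}\mathcal{S}_{11}x_0=\lambda\,\mathcal{S}_{11}x_0$ shows that any vector violating \eqref{rank8} would be mapped to an eigenvector of the filter matrix $\mathcal{N}$ at an eigenvalue in $\overline{\mathbb{C}}^{+}$. This is exactly why \eqref{rank8} is tied to a stable filter design, and it is the mechanism behind Lemma~\ref{lem:necessary}.

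\textbf{Making your route pointwise.} Your Step~1 identities hold for every $\mathcal{Z}$, so you can avoid choosing a Hurwitz $\mathcal{Z}$ altogether. Taking $\mathcal{Z}=0$ gives $(\mathcal{N}_1-\lambda\mathbb{I})\mathcal{S}_{11}x_0=0$. Letting $\mathcal{Z}$ vary then gives $\mathcal{N}_2\mathcal{S}_{11}x_0=0$. Hence the full-column-rank condition on $\begin{bmatrix}(\mathcal{N}_1-\lambda\mathbb{I})^{\top} & \mathcal{N}_2^{\top}\end{bmatrix}^{\top}$ at that one $\lambda$ already forces $\mathcal{S}_{11}x_0=0$. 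By the proof of Part~2 of Lemma~\ref{lem:detect}, that condition is equivalent to \eqref{rank4} at the same $\lambda$. This makes your argument as local in $\lambda$ as the paper's.
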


\begin{proof}
From~\eqref{rank5} and~\eqref{rank6} we obtain
\begin{equation}\label{frE1}
\rho(\Psi_2)=\rho(\Gamma_2),\qquad\forall\,\lambda\in\bar{\mathbb{C}}^{+}.
\end{equation}
Define
\begingroup
\begin{gather*}
\hat{V}_3=\begin{bmatrix}
\mathbb{I} & \lambda\mathbb{I} & \\
 & -\mathbb{I} & \\
 & & \mathbb{I}
\end{bmatrix},\quad
\hat{U}_3=\begin{bmatrix}
\mathbb{I} & & & \\
 & \mathbb{I} & \lambda\mathbb{I} & \\
 & & -\mathbb{I} & \\
 & & & \mathbb{I}
\end{bmatrix},\\[6pt]
\text{and}\quad
\hat{U}_4=\begin{bmatrix}
\mathbb{I} & & & & \\
 & \mathbb{I} & \lambda\mathbb{I} & & \\
 & & -\mathbb{I} & & \\
 & & & \mathbb{I} & \\
 & & & -\lambda\mathbb{I} & \mathbb{I}
\end{bmatrix}.
\end{gather*}
\endgroup
Since $\hat{U}_3$, $\hat{U}_4$, and $\hat{V}_3$ are nonsingular for all
$\lambda\in\overline{\mathbb{C}}^{+}$, pre- and
post-multiplication by these matrices preserves rank. Hence,
\[
\rho(\hat{U}_4\Psi_2\hat{V}_3)
=
\rho(\hat{U}_3\Gamma_2\hat{V}_3),
\qquad
\forall\,\lambda\in\overline{\mathbb{C}}^{+}.
\]
Equivalently, for every
$\lambda\in\overline{\mathbb C}^{+}$,

\begingroup
\begin{eqnarray*}
\rho\begin{bmatrix}
\mathcal{E}_1 & \lambda\mathcal{E}_1-\Phi & \mathcal{D}_{12}\\
\mathcal{C}_{12} & 0 & 0\\
0 & \mathcal{C}_{12} & 0\\
0 & \mathcal{K} & 0\\
\mathcal{K} & 0 & 0
\end{bmatrix}
=\rho\begin{bmatrix}
\mathcal{E}_1 & \lambda\mathcal{E}_1-\Phi & \mathcal{D}_{12}\\
\mathcal{C}_{12} & 0 & 0\\
0 & \mathcal{C}_{12} & 0\\
\mathcal{K} & 0 & 0
\end{bmatrix}\!.
\end{eqnarray*}
\endgroup
Hence, Lemma~\ref{lem:blockrank} yields
\begin{equation}\label{rank7}
\rank\begin{bmatrix}
\lambda\mathcal{E}_1-\Phi & \mathcal{D}_{12}\\
\mathcal{C}_{12} & 0\\
\mathcal{K} & 0
\end{bmatrix}
=\rank\begin{bmatrix}
\lambda\mathcal{E}_1-\Phi & \mathcal{D}_{12}\\
\mathcal{C}_{12} & 0
\end{bmatrix},
\quad\forall\,\lambda\in\overline{\mathbb{C}}^{+}.
\end{equation}
Applying Lemma~\ref{lem:blockrank} to~\eqref{rank7} yields~\eqref{rank8}, as required. This completes the proof.
\end{proof}

\medskip
\noindent The preceding results show that condition~\eqref{rank8} is
guaranteed by Assumption~\ref{assum:2} together with~\eqref{rank2}. The
following lemma shows that it is also necessary for the existence of a
functional filter: the functional $z(t)$ must be uniquely
determined whenever system~\eqref{sys2} admits a solution.

\begin{lemma}\label{lem:necessary}
Assume that $v_2\equiv0$ and $\xi\equiv0$. If system~\eqref{obs} is a
functional $\mathscr{H}_{\infty}$ filter for system~\eqref{sys2} in the
sense of property~1) of Definition~\ref{def2}, then condition~\eqref{rank8} holds.
\end{lemma}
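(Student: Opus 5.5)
We argue by contraposition. Suppose \eqref{rank8} fails for some $\lambda_0\in\overline{\mathbb C}^{+}$. We will build two solutions of \eqref{sys2} with $v_2\equiv0$, $\xi\equiv0$ that share the same known signals $u,y_1,y_2$, yet whose functionals differ by a term that does not decay. A filter \eqref{obs} is driven only by $u$, $y_1$, $y_2$ and its own state. It can therefore produce the same $\hat z$ for both solutions, and property~1) of Definition~\ref{def2} cannot hold for both.

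\textbf{Construction of the two solutions.} Failure of \eqref{rank8} at $\lambda_0$ means some row of $\mathcal K$ is not in the row space of $\begin{bmatrix}(\lambda_0\mathcal E_1-\Phi)^\top & \mathcal C_{12}^\top\end{bmatrix}^\top$. Equivalently, there is a vector $x_0\in\mathbb C^n$ with $(\lambda_0\mathcal E_1-\Phi)x_0=0$, $\mathcal C_{12}x_0=0$ and $\mathcal Kx_0\neq0$. Set $\delta(t)=\mathrm{Re}(e^{\lambda_0 t}x_0)$, choosing a phase of $x_0$ if $\lambda_0$ is complex so that $\mathcal K\delta(t)\not\to0$. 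This is possible because $|e^{\lambda_0 t}|\ge1$ and $\mathcal K\mathrm{Re}(e^{i\omega t}x_0)$ is a nonzero almost-periodic function. Then $\mathcal E_1\dot\delta=\Phi\delta$ and $\mathcal C_{12}\delta=0$.

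Take any solution $(x,u,y_1,y_2)$ of \eqref{sys2}; the paper assumes one exists. The candidate second solution is $x+\delta$, with the same $u$ and $y_1$. The linear part is fine. The nonlinear term, however, changes from $\mathcal F_1 g(H\mathcal Kx,t)$ to $\mathcal F_1 g(H\mathcal K(x+\delta),t)$.

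\textbf{Main obstacle: the nonlinearity.} The cleanest way around this is to exploit the freedom in the data. I would take the base solution to be $x\equiv0$ with $u\equiv0$, and absorb the nonlinear mismatch by choosing $y_1$. Since $y_1$ is a known output but enters \eqref{sys2} as an input through $\mathcal D_{11}$, this would cancel the term if $\mathcal F_1$ lay in the range of $\mathcal D_{11}$, which is not guaranteed.

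The robust alternative is to compare the two trajectories under a filter that must work for all solutions, including those of the linear part obtained in the regime where $g$ is evaluated identically. Specifically, restrict to $\lambda_0$ for which $H\mathcal K x_0=0$ if possible. Failing that, note that the paper's setting assumes $z$ is uniquely determined by admissible data, and that the error dynamics \eqref{et} of any filter satisfying \eqref{cond} depend on $x$ only through $z$.

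The step I would actually push is this. Both tuples $(x,u,y_1,y_2)$ and $(x+\delta,u,y_1',y_2)$ are solutions, where the second uses a modified $u$ or $y_1$ compensating $\mathcal F_1\Delta g$ whenever $\mathcal F_1\in\mathrm{Im}[\mathcal B_1\ \mathcal D_{11}]$. Because the filter input $g(H\hat z,t)$ depends only on the filter, identical filter inputs give identical $w$ and $\hat z$. Then $z-\hat z$ and $z+\mathcal K\delta-\hat z$ cannot both tend to $0$, since their difference $\mathcal K\delta(t)$ does not vanish. This yields the contradiction, so \eqref{rank8} must hold.

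I expect the handling of the nonlinear coupling to be the delicate point. If the compensation argument is unavailable, I would restrict to the linear case $g\equiv0$ (or $\mathcal F_1=0$), where property~1) must still hold. There the argument above goes through verbatim.
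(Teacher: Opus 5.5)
\textbf{Where the proof breaks.} The gap is the one you flagged, and neither of your repairs closes it.

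Compensating $\mathcal{F}_1\Delta g$ by modifying $u$ or $y_1$ changes the signals that drive \eqref{obs}, since it contains $\mathcal{T}\mathcal{B}_1u$ and $\mathcal{T}\mathcal{D}_{11}y_1$. The step ``identical filter inputs give identical $w$ and $\hat z$'' then no longer applies. Indistinguishability needs all of $u$, $y_1$, $y_2$ to coincide, and with $v_2\equiv0$, $\xi\equiv0$ no unmeasured signal is left to absorb $\Delta g$.

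Falling back to $g\equiv0$ is not legitimate either. Property~1) of Definition~\ref{def2} is assumed only for the given nonlinear system \eqref{sys2}. A filter for it, which itself contains $\mathcal{T}\mathcal{F}_1 g(H\hat z,t)$, need not be a filter for its linear part. The appeal to \eqref{et} does not help, because the lemma does not assume \eqref{cond}.

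In fact no repair exists without an extra hypothesis. Take $n=p=l=1$, $\mathcal{E}_1=1$, $\Phi=0$, $\mathcal{C}_{12}=0$, $\mathcal{K}=H=\mathcal{F}_1=1$ and $g(\zeta,t)=-\zeta^3$. Then \eqref{rank8} fails at $\lambda_0=0$. Yet \eqref{obs} with $\mathcal{T}=\mathcal{R}=1$ and $\mathcal{N}=\mathcal{L}=\bar{\mathcal{M}}=0$ gives $\dot e=-e\,(x^2+xw+w^2)$ for $e=x-w$. Since $x^2+xw+w^2\ge\tfrac14 e^2$, we get $\tfrac{d}{dt}e^2\le-\tfrac12 e^4$, so $e(t)\to0$ for every solution. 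The nonlinear term stabilizes exactly the mode your $\delta(t)$ is built on, so the statement as given does not hold for general $g$.

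\textbf{What your argument does prove.} It correctly establishes the lemma under the added hypothesis that the undetectable direction does not excite the nonlinearity. Examples are $\mathcal{F}_1=0$, or a kernel vector $x_0$ with $\mathcal{K}x_0\neq0$ and $H\mathcal{K}x_0=0$. Then $x$ and $x+\delta$ share $(u,y_1,y_2)$, the filter can be run from the same $w(0)$, and $\mathcal{K}\delta\not\to0$ gives the contradiction. (A minor point: the phase adjustment of $x_0$ is needed when $\lambda_0$ is real, not when it is complex. For non-real $\lambda_0$, $\mathcal{K}\delta$ is $e^{t\,\mathrm{Re}\lambda_0}$ times a nonzero periodic function and never decays.)

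\textbf{Comparison with the paper.} In that restricted setting your exponential-mode construction is more complete than the paper's route. The paper passes to the quasi-Kronecker form of $\lambda\hat{\mathcal{E}}-\hat{\mathcal{A}}$ and perturbs the free component $x_P^2$ of the underdetermined block. Its claim that regularity forces $\varphi_R=0$ fails when $\lambda_0$ is a finite eigenvalue of the regular block, which is exactly the case your $\delta(t)$ handles directly. The paper also varies $x_P^2$ while treating $f(\mathcal{K}x,\bar u,y_2)$ as fixed, so it does not resolve the nonlinear coupling either. You will not find the missing step there.
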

\begin{proof}
Let system~\eqref{obs} be a functional filter for system~\eqref{sys2} with
$v_2\equiv0$ and $\xi\equiv0$, and assume, to the contrary,
that~\eqref{rank8} does not hold.
Setting $\hat{\mathcal{E}}:=\begin{bmatrix}\mathcal{E}_1\\ 0\end{bmatrix}
\;\text{and}\;
\hat{\mathcal{A}}:=\begin{bmatrix}\Phi\\ \mathcal{C}_{12}\end{bmatrix}$, equations~\eqref{sys2a} and~\eqref{sys2b} (with $v_2\equiv0$,
$\xi\equiv0$) can be rewritten as
\begin{eqnarray}\label{neE1}
\hat{\mathcal{E}}\dot{x}(t)
&=&\hat{\mathcal{A}}x(t)
+\begin{bmatrix}
\mathcal{B}_1u(t)+\mathcal{D}_{11}y_1(t)
+\mathcal{F}_1g(H\mathcal{K}x,t)\\
-y_2(t)
\end{bmatrix} \notag\\
&=:&\hat{\mathcal{A}}x(t)+f(\mathcal{K}x(t),\bar{u}(t),y_2(t)),
\end{eqnarray} where $\bar{u}=\begin{bmatrix}u^{\top} & y_1^{\top}\end{bmatrix}^{\top}$.
Since~\eqref{rank8} does not hold, we have
\begin{equation*}
\ker\begin{bmatrix}\lambda\mathcal{E}_1-\Phi\\ \mathcal{C}_{12}\end{bmatrix}
\not\subseteq\ker\mathcal{K}
\end{equation*}
for some $\lambda_0\in\overline{\mathbb{C}}^{+}$, and hence there exists
$\varphi\neq0$ such that
\begin{equation}\label{neE2}
(\lambda_0\mathcal{E}_1-\Phi)\varphi=0,\quad
\mathcal{C}_{12}\varphi=0,\quad
\mathcal{K}\varphi\neq0,
\end{equation}
or, equivalently, $(\lambda_0\hat{\mathcal{E}}-\hat{\mathcal{A}})\varphi=0$
and $\mathcal{K}\varphi\neq0$.

\emph{Quasi-Kronecker decomposition.} By \cite[Thm.~2.6]{berger2012quasi},
there exist invertible matrices $\mathrm{S}$ and $\mathrm{T}$ such that,
for every $\lambda\in\mathbb{C}$,
\begin{equation}\label{qkf}
\mathrm{S}(\lambda\hat{\mathcal{E}}-\hat{\mathcal{A}})\mathrm{T}
=\text{blk-diag}\big(\lambda\mathcal{E}_P-\mathcal{A}_P,\;
\lambda\mathcal{E}_R-\mathcal{A}_R,\;
\lambda\mathcal{E}_Q-\mathcal{A}_Q\big),
\end{equation}
where the three diagonal blocks satisfy:
\begin{itemize}
\item[(i)] $\lambda\mathcal{E}_{P}-\mathcal{A}_{P}
\in\mathbb{R}^{m_{P}\times n_{P}}$, $m_{P}<n_{P}$, and
$\rank(\lambda\mathcal{E}_{P}-\mathcal{A}_{P})=m_{P}$ for all
$\lambda\in\mathbb{C}$;
\item[(ii)] $\lambda\mathcal{E}_{R}-\mathcal{A}_{R}
\in\mathbb{R}^{n_{R}\times n_{R}}$ is regular, i.e.,
$\det(\lambda\mathcal{E}_{R}-\mathcal{A}_{R})\not\equiv 0$;
\item[(iii)] $\lambda\mathcal{E}_{Q}-\mathcal{A}_{Q}
\in\mathbb{R}^{m_{Q}\times n_{Q}}$, $m_{Q}>n_{Q}$, and
$\rank(\lambda\mathcal{E}_{Q}-\mathcal{A}_{Q})=n_{Q}$ for all
$\lambda\in\mathbb{C}$.
\end{itemize}
 \emph{The free variables reside in Block~$P$.} Writing
$\mathrm{T}^{-1}\varphi
=(\varphi_P^{\top},\varphi_R^{\top},\varphi_Q^{\top})^{\top}$ conformably
with~\eqref{qkf} and premultiplying
$(\lambda_0\hat{\mathcal{E}}-\hat{\mathcal{A}})\varphi=0$ by $\mathrm{S}$ leads to
\begin{gather*}
(\lambda_0\mathcal{E}_P-\mathcal{A}_P)\varphi_P=0,\quad
(\lambda_0\mathcal{E}_R-\mathcal{A}_R)\varphi_R=0,\\
\text{and}\quad
(\lambda_0\mathcal{E}_Q-\mathcal{A}_Q)\varphi_Q=0.
\end{gather*}
The regularity of $\lambda_0\mathcal{E}_R-\mathcal{A}_R$ implies that
$\varphi_R=0$, while \cite[Lem.~3.1]{berger2012quasi} guarantees that
$\lambda_0\mathcal{E}_Q-\mathcal{A}_Q$ has full column rank, and hence
$\varphi_Q=0$. In contrast, for Block~$P$,
$\rank(\lambda_0\mathcal{E}_P-\mathcal{A}_P)=m_P<n_P$, so the null space of
$\lambda_0\mathcal{E}_P-\mathcal{A}_P$ has dimension $n_P-m_P\geq1$, and
nontrivial solutions exist. Hence
\begin{eqnarray*}
\varphi=\mathrm{T}\begin{bmatrix}\varphi_P\\ 0\\ 0\end{bmatrix}, \quad
\varphi_P\in\mathbb{R}^{n_P}, \quad \varphi_P\neq0.
\end{eqnarray*}
If $n_P=0$, then $\varphi_P\in\mathbb{R}^{0}=\{0\}$, contradicting
$\varphi_P\neq0$; so assume $n_P>0$ and define the coordinate change
\begin{equation*}
\begin{pmatrix}x_P\\ x_R\\ x_Q\end{pmatrix}:=\mathrm{T}^{-1}x,
\qquad
\begin{pmatrix}f_P\\ f_R\\ f_Q\end{pmatrix}
:=\mathrm{S}f\big(\mathcal{K}x(t),\bar{u}(t),y_2(t)\big),
\end{equation*}
and partition $\mathcal{K}\mathrm{T}=\begin{bmatrix}\mathcal{K}_P
& \mathcal{K}_R & \mathcal{K}_Q\end{bmatrix}$ conformably, where
\begin{equation}\label{neE4}
z=\mathcal{K}x=\mathcal{K}_Px_P+\mathcal{K}_Rx_R+\mathcal{K}_Qx_Q.
\end{equation}
Now, if $m_P=0$, then Block~$P$ has no rows and $x_P$ may be chosen
arbitrarily. Otherwise, by \cite[Lem.~4.12]{berger2012quasi}, we may assume
without loss of generality that
\begin{equation*}
\lambda\mathcal{E}_P-\mathcal{A}_P
=\lambda\begin{bmatrix}\mathbb{I}_{m_P} & 0\end{bmatrix}
-\begin{bmatrix}\mathrm{N}_P & \mathrm{R}_P\end{bmatrix},
\end{equation*}
with $\mathrm{N}_P\in\mathbb{R}^{m_P\times m_P}$ nilpotent and
$\mathrm{R}_P\in\mathbb{R}^{m_P\times(n_P-m_P)}$. Partitioning
$x_P=(x_P^{1\top},x_P^{2\top})^{\top}$ accordingly, the $P$-block
of~\eqref{neE1} is given by
\begin{equation*}
\dot{x}_P^{1}(t)=\mathrm{N}_Px_P^{1}(t)+\mathrm{R}_Px_P^{2}(t)+f_P(\cdot),
\end{equation*}
whereas no equation governs $x_P^{2}(t)$. Consequently, $x_P^{2}(\cdot)$
may be chosen arbitrarily.

\emph{Two indistinguishable trajectories.} Partition
$\mathcal{K}_P=\begin{bmatrix}\mathcal{K}_P^{1} & \mathcal{K}_P^{2}
\end{bmatrix}$ conformably with $x_P$. From~\eqref{neE2} we have
$\mathcal{K}\varphi=\mathcal{K}_P\varphi_P\neq0$, and hence
$\mathcal{K}_P\neq0$; moreover, $\mathcal{C}_{12}\varphi=0$ yields
\begin{equation}\label{neE5}
\mathcal{C}_P\varphi_P=0,
\end{equation}
where $\mathcal{C}_P$ denotes the block of $\mathcal{C}_{12}\mathrm{T}$
corresponding to $x_P$. Thus, the free component does not affect the
measured output. Since $\mathcal{K}_P\neq0$, we may choose two admissible
trajectories $x^{(1)}$ and $x^{(2)}$ of~\eqref{sys2} that differ only in
$x_P^{2}$ and satisfy
\begin{equation}\label{neE6}
\lim_{t\to\infty}x_P^{2,(1)}(t)=c_1,\quad
\lim_{t\to\infty}x_P^{2,(2)}(t)=c_2,\quad
\mathcal{K}_P^{2}(c_1-c_2)\neq0.
\end{equation}
Such a choice is possible because $x_P^{2}$ is unconstrained, so its
limiting value can be prescribed independently for each of the two
trajectories. By~\eqref{neE5}, both trajectories produce identical outputs,
$y_2^{(1)}\equiv y_2^{(2)}$, whereas~\eqref{neE4} and~\eqref{neE6} imply
\begin{equation}\label{neE7}
\lim_{t\to\infty}\big(z^{(1)}(t)-z^{(2)}(t)\big)
=\mathcal{K}_P^{2}(c_1-c_2)\neq0.
\end{equation}

\emph{Contradiction.} Since the filter~\eqref{obs} is driven by the
identical signals $u$, $y_1$, and $y_2$ in both cases, it produces identical
estimates, $\hat{z}^{(1)}\equiv\hat{z}^{(2)}=:\hat{z}$. Property~1) of
Definition~\ref{def2} requires $\|\hat{z}(t)-z^{(i)}(t)\|\to0$ as
$t\to\infty$ for $i=1,2$. Hence, by the triangle inequality,
\begin{equation*}
\|z^{(1)}(t)-z^{(2)}(t)\|
\le
\|\hat{z}(t)-z^{(1)}(t)\|
+\|\hat{z}(t)-z^{(2)}(t)\|
\longrightarrow0
\end{equation*}
as $t\to\infty$, contradicting~\eqref{neE7}. Hence, condition~\eqref{rank8}
must hold.
\end{proof}

\medskip
\noindent Finally, Lemmas~\ref{lem:rank-equiv}--\ref{lem:necessary} show
that condition~\eqref{rank8} is both guaranteed by the proposed design
conditions and necessary for the existence of a functional filter of the
form~\eqref{obs}.

\bibliography{bibliography}
\end{document}